\documentclass{article}

\pagestyle{headings}

\usepackage[top=1in,bottom=1in,left=1in,right=1in]{geometry}
\usepackage{graphicx}
\usepackage{caption}
\usepackage{subcaption}
\usepackage{float} 
\usepackage{lmodern}
\usepackage[T1]{fontenc}
\usepackage[english,activeacute]{babel}
\usepackage{mathtools}
\usepackage{amsmath,amsthm,amssymb,amsfonts}
\usepackage{mathrsfs}
\usepackage{stmaryrd} 
\usepackage[noadjust]{cite} 
\usepackage{enumitem}
\setlist[enumerate]{noitemsep}
\setlist[itemize]{leftmargin=*,noitemsep}
\usepackage{verbatim} 
\usepackage[usenames,dvipsnames,svgnames,table]{xcolor}
\usepackage[utf8]{inputenc} 
\usepackage{soul} 
\usepackage{tikz-cd} 
\usetikzlibrary{babel} 
\usepackage{setspace} 
\usepackage{array, multicol, multirow, makecell}
\usepackage{subfiles}
\usepackage[nottoc]{tocbibind} 
\usepackage{longtable}
\usepackage{tikz}
\usetikzlibrary{positioning}
\usepackage{dynkin-diagrams}
\usepackage{authblk}
\usepackage{abstract}
\usepackage{bbm}

\usepackage[pdftex,backref=page]{hyperref} 

\hypersetup{linktocpage=true,colorlinks=true,citecolor=blue,urlcolor=blue}

\usepackage{orcidlink}

\usepackage{import}
\usepackage{xifthen}
\usepackage{pdfpages}
\usepackage{transparent}

\newcommand{%
	
	\import{./figures/}{.pdf_tex}
}[1]{%
	
	\import{./figures/}{#1.pdf_tex}
}

\let\OLDthebibliography\thebibliography
\renewcommand\thebibliography[1]{
	\OLDthebibliography{#1}
	\setlength{\parskip}{0pt}
}

\setlength{\parindent}{0cm} 


\newtheorem{theorem}{Theorem}[section]
\newtheorem{corollary}[theorem]{Corollary}
\newtheorem{proposition}[theorem]{Proposition}
\newtheorem{lemma}[theorem]{Lemma}
\newtheorem{conjecture}{Conjecture}
\newtheorem{question}{Question}
\theoremstyle{definition}
\newtheorem{definition}[theorem]{Definition}

\newtheorem{remark}[theorem]{Remark}


\DeclareMathOperator{\Id}{Id}

\DeclareMathOperator{\End}{End}

\DeclareMathOperator{\ad}{ad}
\DeclareMathOperator{\Der}{Der}

\DeclareMathOperator{\Trace}{tr}

\DeclareMathOperator{\Rm}{Rm}
\DeclareMathOperator{\Ric}{Ric}
\DeclareMathOperator{\ric}{ric}

\DeclareMathOperator{\Heis}{Heis}
\DeclareMathOperator{\diag}{diag}

\DeclareMathOperator{\CH}{\mathbb{C}\mathrm{H}}
\DeclareMathOperator{\HH}{\mathbb{H}\mathrm{H}}

\newcommand{\N}{\mathbb{N}}

\newcommand{\R}{\mathbb{R}}
\newcommand{\C}{\mathbb{C}}

\newcommand{\calB}{\mathcal{B}}

\newcommand{\calL}{\mathcal{L}}

\newcommand{\calQ}{\mathcal{Q}}

\newcommand{\fra}{\mathfrak{a}}
\newcommand{\frb}{\mathfrak{b}}
\newcommand{\frg}{\mathfrak{g}}

\newcommand{\frl}{\mathfrak{l}}

\newcommand{\frn}{\mathfrak{n}}

\newcommand{\frs}{\mathfrak{s}}

\renewcommand{\Re}{\operatorname{Re}}
\renewcommand{\Im}{\operatorname{Im}}

\newcommand{\SU}{\mathrm{SU}}
\newcommand{\Sp}{\mathrm{Sp}}

\newcommand{\heis}{\mathfrak{heis}}

\newcommand{\pd}[2]{\frac{\partial #1}{\partial #2}}
\renewcommand{\d}{\mathrm{d}}

\DeclarePairedDelimiter{\abs}{\lvert}{\rvert}
\DeclarePairedDelimiter{\norm}{\lVert}{\rVert}

\allowdisplaybreaks


\title{\textbf{Quaternionic K\"ahler manifolds fibered by solvsolitons}}
\author{Vicente Cortés, Alejandro Gil-García \orcidlink{0000-0002-9370-241X}, and Markus Röser}
\affil{\normalsize Fachbereich Mathematik\\
    Universit\"at Hamburg\\
    Bundesstra\ss e 55, 20146 Hamburg, Germany\\
    vicente.cortes@uni-hamburg.de, markus.roeser@uni-hamburg.de}
\affil{\normalsize Beijing Institute of Mathematical Sciences and Applications (BIMSA)\\
    No.\ 544, Hefangkou Village, Huaibei Town, Huairou District, Beijing 101408, China\\
    alejandrogilgarcia@bimsa.cn}
\date{\today}

\begin{document}


\maketitle
  
\begin{abstract}

This paper is concerned with the geometry of principal orbits in quaternionic K\"ahler manifolds $M$ of cohomogeneity one. We focus on the complete cohomogeneity one examples obtained from the non-compact quaternionic K\"ahler symmetric spaces associated with the simple Lie groups of type A by the one-loop deformation. We prove that for zero deformation parameter the principal orbits form a fibration by solvsolitons (nilsolitons if $4n=\dim M=4$). The underlying solvable group is non-unimodular if $n>1$ and is the Heisenberg group if $n=1$. We show that under the deformation, the hypersurfaces remain solvmanifolds but cease to be Ricci solitons.\bigskip

\emph{Keywords: Ricci solitons, solvsolitons, quaternionic K\"ahler manifolds, (one-loop deformed) c-map spaces, Einstein manifolds of cohomogeneity one}\medskip

\emph{MSC classification: 53C25, 53C26, 53C40}

\end{abstract}

\clearpage

\tableofcontents


\section{Introduction}

Quaternionic Kähler manifolds are $4n$-dimensional Riemannian manifolds whose holonomy group is contained in $\Sp(n)\Sp(1)$ (restricting to $n>1$, for the moment). The latter group belongs to Berger's list of irreducible Riemannian holonomy groups \cite{Ber55}. Equivalently, quaternionic Kähler manifolds are Riemannian manifolds $(M,g)$ of non-zero scalar curvature equipped with a parallel skew-symmetric almost quaternionic structure $\calQ\subset\End(TM)$. They are Einstein manifolds, so their theory naturally divides in positive or negative scalar curvature. Positive quaternionic Kähler geometry is very rigid, and it is conjectured that the only examples of complete quaternionic Kähler manifolds of positive scalar curvature are symmetric spaces of compact type \cite{LS94}.\medskip

On the other hand, negative quaternionic Kähler geometry has shown to be very rich. There exist locally symmetric, homogeneous non-symmetric and complete non-locally homogeneous examples. Recently, examples of complete non-locally homogeneous quaternionic Kähler manifolds of negative scalar curvature with two ends, one of finite volume and the other one of infinite volume, have been constructed in all dimensions \cite{CRT21,Cor23}. Nevertheless, the problem of finding complete non-locally symmetric quaternionic Kähler manifolds of finite volume is still open, in contrast with all the other holonomy groups in Berger list, in which even compact non-locally symmetric examples are known.\medskip

A way to obtain quaternionic Kähler manifolds of negative scalar curvature is through the so-called \emph{supergravity $c$-map} construction \cite{CFG89,FS90,Hit09}. This was originally introduced by physicists and assigns a quaternionic Kähler manifold of negative scalar curvature to each \emph{projective special Kähler manifold}. Moreover, the supergravity $c$-map metric admits a one-parameter deformation by quaternionic Kähler metrics \cite{RSV06,ACDM15}. This metric is usually known as the \emph{(one-loop) deformed supergravity $c$-map metric}. Many of the known examples of quaternionic Kähler manifolds of negative scalar curvature are in the image of the supergravity $c$-map. In particular, all homogeneous examples, except the quaternionic hyperbolic spaces $\HH^n$, are supergravity $c$-map spaces \cite{dWVP92}. Hence, they admit a one-parameter deformation which depends on a real constant $c\in\R$. When $c>0$, they are complete \cite{CDS17} and (exactly) of cohomogeneity one \cite{CST21,CGS23}.\medskip

Thanks to the results of \cite{Ale75,Cor96,Heb98,Lau10,BL23}, it is known that every homogeneous quaternionic Kähler  manifold of negative scalar curvature is an \emph{Alekseevsky space}, that is, homogeneous under a simply transitive completely solvable group of isometries. Therefore, every homogeneous quaternionic Kähler  manifold of negative scalar curvature is an \emph{Einstein solvmanifold}. The systematic study of Einstein solvmanifolds goes back to the seminal work of Heber \cite{Heb98} and since then many important results have been obtained (see for instance \cite{Lau01,Lau09,Lau11,LL14,BL22,BL23}). An important feature of Einstein solvmanifolds is that it is possible to construct \emph{algebraic Ricci solitons} on some of their hypersurfaces. They are a specialization of the Ricci solitons introduced by Hamilton \cite{Ham88} adapted to the algebraic structure of the homogeneous manifold. More precisely, an algebraic Ricci soliton on a solvable/nilpotent Lie group is called \emph{solvsoliton/nilsoliton}.\medskip

As we have explained, the one-loop deformation of every homogeneous supergravity $c$-map space is of cohomogeneity one. In this paper we study the induced geometry of the hypersurface orbits of the group acting with cohomogeneity one. In particular, we focus on the $4n$-dimensional quaternionic Kähler manifold $(\bar N,g_{\bar N}^c)$, where 
\begin{equation}\label{eq:SU_sym_space}
    \bar N:=\frac{\mathrm{SU}(n,2)}{\mathrm{S}(\mathrm{U}(n)\times\mathrm{U}(2))}
\end{equation}
is equipped with its one-loop deformed metric $g_{\bar N}^c$. This quaternionic Kähler manifold is obtained by applying the deformed supergravity $c$-map to $\CH^{n-1}$ considered as a homogeneous projective special Kähler manifold (see \cite{CDS17}). If $c=0$, then $(\bar N,g^0_{\bar N})$ is the quaternionic K\"ahler symmetric space $(\SU(n,2)/\mathrm{S}(\mathrm{U}(n)\times\mathrm{U}(2)),g_{\mathrm{can}})$. If $c>0$ then $(\bar N,g^c_{\bar N})$ is of cohomogeneity one. The manifold $(\bar N, g_{\bar N}^c)$ may be described as a product $(0,\infty)\times K$ for an open subset $K\subset\R^{4n-1}$ and the metric has the form
$$g_{\bar{N}}^c=\frac{1}{4\rho^2}\frac{\rho+2c}{\rho+c}\d\rho^2+g_\rho^c,$$
where $g_\rho^c:=g_{\bar N}^c|_{T\bar N_\rho\times T\bar N_\rho}$ is the induced metric on the fiber $\bar N_\rho := \{\rho\}\times K$. As described explicitly in \cite{CRT21}, the fibers $\{\rho\}\times K$ are the orbits of a $c$-dependent isometric action of the Lie group $\mathrm{U}(1,n-1)\ltimes\Heis_{2n+1}$. The Lie group $L := B\ltimes \Heis_{2n+1}$, where $B$ is the Iwasawa subgroup of $\mathrm{SU}(1,n-1)$, acts simply transitively on $\bar N_\rho$ and the induced metric $g_\rho^c$ on $\bar N_\rho$ can be seen as a left-invariant metric on the solvable Lie group $L$. Hence the hypersurface $(\bar N_\rho\cong L,g_\rho^c)$ is a solvmanifold and in this paper we  study its geometry. We notice that the case $n=1$ is special, since then $L = \Heis_{3}$ is nilpotent and it is known that $g_\rho^c$ is a nilsoliton for any choice of $\rho>0$ and $c\geq 0$ (see \cite{Mil76}).\medskip

The main result of this article may be formulated as follows (see Theorems \ref{thm_heis3}, \ref{thm:c=0_soliton} and \ref{thm:c>0_not_soliton}):

\begin{theorem}\label{thm:main_theorem}
    Let $(\bar{N},g_{\bar{N}}^c)$ be the one-loop deformation of the symmetric space \eqref{eq:SU_sym_space} and let $(\bar{N}_\rho,g_\rho^c)$ be the fiber of $\rho\colon\bar{N}\to\R_{>0}$ considered as a solvmanifold. Then: 
    \begin{itemize}
        \item If $n=1$, the pair $(\bar{N}_\rho,g_\rho^c)$ is a nilsoliton for every $c\geq0$ and $\rho>0$.
        \item If $n>1$, the pair $(\bar{N}_\rho,g_\rho^c)$ is a solvsoliton for $c=0$ and $\rho>0$.
        \item If $n>1$, the pair $(\bar{N}_\rho,g_\rho^c)$ is \emph{not} a solvsoliton for $c>0$ and $\rho>0$.
    \end{itemize}
\end{theorem}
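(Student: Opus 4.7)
For the first bullet, the acting group reduces to $L = \Heis_{3}$ and $g_\rho^c$ is a left-invariant metric on it. Milnor showed in \cite{Mil76} that every left-invariant metric on $\Heis_{3}$ is, up to scaling, isometric to the standard Heisenberg metric, which is a nilsoliton. The conclusion is therefore immediate once one identifies $g_\rho^c$ as such a left-invariant metric via the description of the isometric action recalled in the introduction.

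For the second bullet, I would exploit that $(\bar N, g_{\bar N}^0)$ is the rank-two non-compact Hermitian symmetric space $\SU(n,2)/\mathrm{S}(\mathrm{U}(n)\times \mathrm{U}(2))$, so the Iwasawa decomposition $\SU(n,2) = KAN$ (with $\dim A = 2$) presents $\bar{N}$ as a standard rank-two Einstein solvmanifold in the sense of Heber \cite{Heb98}. The subgroup $L$ has codimension one in $S = AN$, obtained by removing the one-dimensional subspace of $\fra$ generating the $\rho$-flow, so $L$ has the form $\fra_{1}\ltimes \frn$ where $\frn$ is the nilradical of $S$ and $\fra_{1}\subset\fra$ is one-dimensional. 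By Heber's theorem, $\frn$ with the restricted metric is a nilsoliton, and $\fra_{1}$ acts on it by symmetric derivations commuting with the nilsoliton derivation. Lauret's structural characterization of solvsolitons as semi-direct products of an abelian algebra by a nilsoliton via commuting symmetric derivations then applies directly to $L$ with the induced metric; the derivation $D$ in $\Ric=\lambda \Id + D$ can be read off from the Einstein derivation of $S$ and the second fundamental form of $L\subset S$.

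For the third bullet, direct computation is unavoidable. Using the explicit structure of $\frl = \frb \ltimes \heis_{2n+1}$ and the $c$-dependent left-invariant metric $g_\rho^c$ supplied in \cite{CRT21}, one sets up an orthonormal basis of $\frl$ adapted to the decomposition $\frl = \fra_{1}\oplus \frn_{1}\oplus \heis_{2n+1}$, computes the Ricci operator $\Ric^c$ via Koszul's formula, and tests the solvsoliton equation
\[
    \Ric^c = \lambda \Id + D,\qquad D\in\Der(\frl),\;\lambda \in \R.
\]
The strategy is to exhibit a specific bracket relation $[X,Y]=Z$ in $\frl$ for which the derivation identity $(\Ric^c - \lambda \Id)(Z) = [(\Ric^c-\lambda\Id)X,Y]+[X,(\Ric^c-\lambda\Id)Y]$, after $\lambda$ is fixed from a diagonal entry of $\Ric^c$, reduces to an algebraic condition in $c$ and $\rho$ whose only solution is $c=0$.

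The main obstacle is the bookkeeping required: since $g_\rho^c$ depends on $c$, so does the orthonormal frame, and tracking the precise $c$-dependence of every component of $\Ric^c$ is delicate. Once this is done, the failure of the derivation property at $c>0$ should appear in a single off-diagonal entry, controlled by a polynomial in $c$ vanishing only at $c=0$, which then furnishes the desired obstruction.
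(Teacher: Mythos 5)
Your first bullet is handled exactly as in the paper's remark after Theorem \ref{thm_heis3}: all left-invariant metrics on $\Heis_3$ are homothetic and hence nilsolitons by \cite{Mil76} (the paper additionally verifies this directly from the computed Ricci endomorphism in Proposition \ref{Prop:gcrho_frl}). Your second bullet, however, takes a genuinely different route. The paper proves Theorem \ref{thm:c=0_soliton} by brute force: it exhibits the derivation $\delta$ of Lemma \ref{Lem:DerL} and checks $\ric_\rho^0=-2(n+2)\Id+(2n+2)\delta$ against the explicitly computed principal Ricci curvatures. Your structural argument (Iwasawa group $S=AN$ of $\SU(n,2)$, $L$ a codimension-one subgroup containing the nilradical, then Heber plus Lauret) is essentially the content of \cite[Theorem A]{DST21}, which the paper invokes only as a consistency check in Remark \ref{rmk:general_sym_solv}. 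To make your version complete you must (i) actually identify the orbit group $L$ of the $c$-dependent action of \cite{CRT21}, at $c=0$, with a codimension-one subgroup of $AN$ containing the whole nilradical, with the orbit metric equal to the restricted left-invariant metric, and (ii) verify all four conditions of Theorem \ref{thm:nilsoliton_solvsoliton}, in particular the normalization $g(A,A)=-\tfrac1\lambda\Trace(\ad(A)^s\circ\ad(A)^s)$, which is not automatic; this is precisely what \cite{DST21} supplies. What your route buys is independence from the explicit metric; what the paper's buys is that the same computed data ($g^c_\rho$ and $\ric^c_\rho$ in the basis $\calB_n$) is reused for the $c>0$ statement.

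For the third bullet there is a genuine, though fixable, gap: you ``fix $\lambda$ from a diagonal entry of $\Ric^c$'' without justification, whereas a priori you must exclude the soliton equation for every $\lambda\in\R$ and every $D\in\Der(\frl)$. The missing ingredient is the standard fact that any derivation of a solvable Lie algebra takes values in the nilradical, here $[\frl,\frl]$; since $\fra=\R B_1^R$ is a complement, $D(B_1^R)$ has no $B_1^R$-component, which forces $\lambda=r_1$. With that supplement your test does work, and in fact on the very first bracket one tries: since $\ric^c_\rho(B_1^I)=r_1B_1^I+2c(r_1-r_2)Z$ and $[B_1^R,Z]=0$, the derivation identity applied to $[B_1^R,B_1^I]=2B_1^I$ forces $4c(r_1-r_2)=0$, impossible for $c>0$ because $r_1<0<r_2$. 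Be aware, though, that this still requires the full $c$-dependent Ricci operator, which the paper obtains not by Koszul's formula on $\frl$ but via the hypersurface formula for Einstein ambient spaces (Lemma \ref{Lem:RicNrho} and Proposition \ref{prop:Ricci_tensor}), which largely removes the bookkeeping you are worried about. The paper's own obstruction is also cleaner: by Theorem \ref{thm:nilsoliton_solvsoliton} a solvsoliton forces $\ad(B_1^R)$ to be normal, and Lemma \ref{Lem:adB1Rnotnormal} shows $[\ad(B_1^R),\ad(B_1^R)^*]\neq0$ for $c>0$; alternatively, Remark \ref{rmk:alternative_proof} combines Lauret's uniqueness of solvsolitons up to scaling with Corollary \ref{cor:non-homothetic}. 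Either of these avoids hunting for a failing bracket altogether.
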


As we have said, the case $n=1$ is already known by the work of Milnor \cite{Mil76}. Moreover, in the case $c=0$ the quaternionic Kähler manifold \eqref{eq:SU_sym_space} is a symmetric space and it follows from the work of \cite{DST21} that $(\bar{N}_\rho,g_\rho^0)$ is a solvsoliton, thus our explicit computation agrees with this general result (see Remark \ref{rmk:general_sym_solv}). Our computations also show that in the case $c>0$ we do not have a solvsoliton anymore. We obverse in Remark \ref{rmk:alternative_proof} that we can also prove the third point of the theorem combining the fact that the case $c=0$ is a solvsoliton, together with the uniqueness results about solvsolitons obtained by Lauret in \cite{Lau11} and our study of the Ricci curvature of $\bar{N}_\rho$.\medskip

The results of \cite{DST21} hold, in particular, for every quaternionic Kähler symmetric space of negative scalar curvature. Then, combining \cite{DST21} and the results of Lauret \cite{Lau11}, together with an analogous study of the Ricci curvature of the one we perform here, we have enough evidence to state the following conjecture:

\begin{conjecture}
    Consider the one-loop deformation, with $c>0$, of a symmetric supergravity $c$-map space of dimension $4n>4$. Then the hypersurface orbit of its cohomogeneity one action equipped with the induced metric is not a solvsoliton.
\end{conjecture}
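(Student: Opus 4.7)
The plan is to extend the strategy that proves the third bullet of Theorem~\ref{thm:main_theorem} (see Remark~\ref{rmk:alternative_proof}) from the type~A case to the entire list of symmetric supergravity $c$-map spaces. The three key ingredients are the solvsoliton theorem of Domínguez-Vázquez, Sanmartín-López and Tamaru \cite{DST21} for quaternionic K\"ahler symmetric spaces, Lauret's uniqueness theorem for solvsolitons \cite{Lau11}, and an explicit analysis of the Ricci operator along the one-loop deformation.

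First, I would enumerate the symmetric supergravity $c$-map spaces of dimension $4n>4$ using the classification in \cite{dWVP92}. For each such space the Alekseevsky description provides a simply transitive solvable subgroup $L$ of the cohomogeneity one group, so that the induced metric $g^c_\rho$ on the principal orbit can be regarded as a $c$-dependent family of left-invariant metrics on a fixed solvable Lie group $L$, smoothly defined for $c\geq 0$.

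Second, for $c=0$ the ambient space is symmetric and \cite{DST21} produces a solvsoliton structure on $(L,g^0_\rho)$. Lauret's rigidity theorem \cite{Lau11} then implies that any other solvsoliton left-invariant metric on $L$ is equivalent to $g^0_\rho$ up to a positive rescaling and an automorphism of $L$; in particular, the unordered multiset of Ricci eigenvalues of the soliton, modulo a common positive factor, is an invariant of the solvsoliton class on $L$. The third step is then to compute, for each space in the list, the Ricci operator of $(L,g^c_\rho)$ and to exhibit two Ricci eigenvalues whose ratio strictly varies with $c$ and agrees with the $c=0$ value only at $c=0$; this precludes $g^c_\rho$ from being a solvsoliton for any $c>0$.

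The main obstacle is the uniform execution of the Ricci eigenvalue computation across the exceptional cases, in which the nilradical of $L$ has a more intricate two-step structure than the Heisenberg model of the type~A situation treated here. A practical route is to use the general formula for the one-loop deformed $c$-map metric (see \cite{CDS17,CST21,CGS23}) to identify a distinguished orthonormal pair of vectors, one tangent to the solvable base and one in the centre of the nilradical, and track the $c$-dependence of the corresponding Ricci eigenvalue ratio. A conceptually cleaner alternative would be to differentiate the family of metrics in $c$ at $c=0$ and show that $\partial_c|_{c=0}\,g^c_\rho$ is not tangent to the solvsoliton variety inside the space of left-invariant metrics on $L$, which by Lauret's theorem would already be sufficient to conclude.
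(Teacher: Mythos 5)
Note first that the statement you are addressing is stated in the paper as a \emph{conjecture}: the authors do not prove it, and what you have written is precisely the strategy they themselves indicate as evidence (the route of Remark \ref{rmk:alternative_proof}: the solvsoliton structure at $c=0$ from \cite{DST21}, Lauret's uniqueness up to isometry and scaling \cite{Lau11}, and a non-homothety statement in the spirit of Corollary \ref{cor:non-homothetic}). So your proposal is not a proof but a research plan, and the gap is exactly the part that keeps the statement a conjecture: for every symmetric supergravity $c$-map space other than the type~A family treated here, you must (i) verify that the one-loop deformed metrics $g^c_\rho$ on the principal orbits are left-invariant metrics on a \emph{fixed} solvable Lie group $L$ independent of $c$ (in the type~A case this rests on the explicit $c$-dependent action constructed in \cite{CRT21}; for the orthogonal and exceptional families the analogous statement has to be established before Lauret's rigidity can be invoked), and (ii) actually compute the Ricci operator of $g^c_\rho$ and exhibit a $c$-dependent eigenvalue ratio. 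Neither step is carried out, and step (ii) is nontrivial precisely because the nilradicals in the remaining cases are not of the Heisenberg type for which the paper's diagonalization of $g^c_\rho$ and of $\ric^c_\rho$ (Propositions \ref{prop:Ricci_tensor} and \ref{Prop:gcrho_frl}) was feasible.

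A second, more specific flaw: your ``conceptually cleaner alternative'' of differentiating at $c=0$ and showing that $\partial_c|_{c=0}\,g^c_\rho$ is not tangent to the set of solvsoliton metrics cannot prove the conjecture as stated. At best it rules out the solvsoliton property for $c$ in a punctured neighbourhood of $0$, not for all $c>0$, and it additionally requires knowing that the solvsoliton metrics on $L$ form a smooth submanifold (the orbit of $g^0_\rho$ under scaling and automorphisms) near $g^0_\rho$ --- a fact that again relies on \cite{Lau11} and must be argued, not assumed. If you want a proof along these lines, the robust route is the global one: assume $g^c_\rho$ is a solvsoliton for some $c>0$, apply \cite[Theorem 5.1]{Lau11} to conclude $g^c_\rho$ is homothetic to $g^0_\rho$, and contradict this by an explicit eigenvalue-ratio computation valid for \emph{all} $c>0$, as in Corollary \ref{cor:non-homothetic}; alternatively one can try to violate one of the structural conditions of Theorem \ref{thm:nilsoliton_solvsoliton} directly, as the paper does in Lemma \ref{Lem:adB1Rnotnormal} by showing $\ad(A)$ fails to be normal. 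Until those computations are done case by case (or replaced by a uniform structural argument), the statement remains open.
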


The only 4-dimensional quaternionic Kähler symmetric spaces of negative scalar curvature are $\HH^1$ and $\CH^2$. The former is not a supergravity $c$-map space and the latter is a special case that we already consider in this paper.\medskip

To apply the results of Lauret about the uniqueness of solvsolitons, we rely on the fact that the hypersurface orbit $L$ of the cohomogeneity one action is a solvsoliton when $c=0$ and, as we said, this is true for symmetric supergravity $c$-map spaces by \cite{DST21}. However, for a homogeneous non-symmetric quaternionic Kähler manifold of negative scalar curvature $\bar{N}$ the results of \cite{DST21} do not apply. Nevertheless, we still have the group $L$ acting with cohomogeneity one on $\bar{N}$ (although in the case $c=0$, since the quaternionic Kähler manifold is homogeneous, there is a bigger group acting simply transitively on $\bar{N}$). A natural question that arise is the following:

\begin{question}
    Consider a homogeneous non-symmetric quaternionic Kähler manifold of negative scalar curvature. Is the hypersurface orbit of the cohomogeneity one action equipped with the induced metric a solvsoliton?
\end{question}

Finally, it is shown in \cite{CS18} that the one-loop deformation of $\CH^2$ has negative sectional curvature. More precisely, the authors prove that the one-loop deformation is a complete $\frac{1}{4}$-pinched negatively curved quaternionic Kähler manifold. This corresponds to the case $n=1$ of \eqref{eq:SU_sym_space}. Some experimental evidence suggests that the deformed supergravity $c$-map space that we are considering has also negative sectional curvature. It is an interesting problem to construct complete non-homogeneous Riemannian manifolds of negative sectional curvature. We then pose the following question:

\begin{question}
    Is the one-loop deformation of \eqref{eq:SU_sym_space} a Riemannian manifold of negative sectional curvature? More generally, is the one-loop deformation of a homogeneous supergravity $c$-map space a Riemannian manifold of negative sectional curvature?
\end{question}

The paper is organized as follows. In Section \ref{sec:preliminaries_solv} we recall the definition of solvsolitons and the results relevant for the paper. In Section \ref{sec:Ricci_curvature} we obtain a general formula for the Ricci curvature of a hypersurface of an Einstein manifold (see Lemma \ref{Lem:RicNrho}) and then we apply it to our particular example to obtain the eigenvalues of the Ricci endomorphism of the hypersurface (see Proposition \ref{prop:Ricci_tensor}). In Section \ref{sec:Riemannian_solv} we first recall the Iwasawa decomposition of $\mathfrak{su}(1,n-1)$ and then we describe the Lie algebra of $L$, showing that $L$ is completely solvable and non-unimodular for $n>1$ (see Proposition \ref{prop:non-unimodular}). Then we describe the induced metric on the hypersurface as a left-invariant metric and express its Ricci endomorphism in a basis of left-invariant vector fields (see Proposition \ref{Prop:gcrho_frl}). Finally we obtain our main results in Theorems \ref{thm_heis3}, \ref{thm:c=0_soliton} and \ref{thm:c>0_not_soliton}.\medskip

This paper is part of the second named author PhD thesis, see \cite[Chapter 6]{GilGarcia24}.

\subsection*{Acknowledgements}

We thank Christoph Böhm for suggesting the problem studied in this paper. The work of VC and MR is supported by the Deutsche Forschungsgemeinschaft (DFG, German Research Foundation) under Germany’s Excellence Strategy -- EXC 2121 ``Quantum Universe'' -- 390833306 and under -- SFB-Geschäftszeichen 1624 -- Projektnummer 506632645. The work of AGG is supported by the Beijing Institute of Mathematical Sciences and Applications (BIMSA) and was previously funded by the two above-mentioned grants.


\section{Preliminaries on solvsolitons}\label{sec:preliminaries_solv}
  
Ricci solitons were introduced by Hamilton in \cite{Ham88} and they are a generalization of Einstein manifolds. More precisely, a Riemannian manifold $(M,g)$ is a \emph{Ricci soliton} if there exist a real number $\lambda\in\R$ and a vector field $V\in\Gamma(TM)$ such that $$\Ric=\lambda g+\calL_Vg,$$ where $\Ric$ denotes the Ricci curvature of $g$. Note that if $V$ is Killing then $(M,g)$ is Einstein.\medskip

We are interested in studying the existence of Ricci solitons in the homogeneous setting, more precisely Lie groups equipped with left-invariant metrics.

\begin{definition}
    Let $(G,g)$ be a simply connected Lie group equipped with a left-invariant Riemannian metric $g$, and let $\frg$ denote the Lie algebra of $G$. Then the pair $(G,g)$ is called an \emph{algebraic Ricci soliton} if there exist a real number $\lambda\in\R$ and a derivation $D\in\Der(\frg)$ such that \begin{equation}\label{eq:alg_Ricci_sol}
        \ric=\lambda\Id+D,
    \end{equation} where $\ric$ denotes the Ricci endomorphism of $g$. In particular, an algebraic Ricci soliton on a solvable (resp.\ nilpotent) Lie group is called a \emph{solvsoliton} (resp.\ \emph{nilsoliton}).
\end{definition}
  
The relationship between left-invariant Ricci solitons on simply connected Lie groups and algebraic Ricci solitons was studied by Lauret in \cite{Lau01,Lau11}. He shows that any algebraic Ricci soliton gives rise to a Ricci soliton. He proves that the converse also holds in the case of completely solvable Lie groups. Recall that $G$ is called \emph{completely solvable} if $G$ is solvable and the eigenvalues of $\ad(X)$ are real for all $X\in\frg$. Note that nilpotent Lie groups are completely solvable.
  



\begin{remark}
    It is shown in \cite[Proposition 4.6]{Lau11} that if $(G,g)$ is a solvsoliton with $\lambda\geq0$, then $\ric=0$. Thus the corresponding left-invariant Ricci soliton is Ricci-flat and hence flat by \cite{AK75}.
\end{remark}

From now on, $S$ denotes a simply connected solvable Lie group and $g$ is a left-invariant Riemannian metric on $S$. Then the pair $(S,g)$ is called a \emph{solvmanifold}. A particularly interesting case is when $(S,g)$ is Einstein. The relation between Einstein solvmanifolds and nilsolitons and solvsolitons has been deeply studied in the literature. Let us briefly recall some of these relations.\medskip
    
Let $\frs$ denote the Lie algebra of $S$. We decompose $$\frs=\fra\oplus\frn,$$ where $\frn$ is the nilradical of $\frs$, that is,  its maximal nilpotent ideal, and $\fra:=\frn^\perp$ is the orthogonal complement of $\frn$. When $\fra$ is abelian, then $(\frs=\fra\oplus\frn,g)$ is called \emph{standard}. Lauret showed in \cite{Lau10} that every Einstein solvmanifold is standard. Based on the seminal work of Heber \cite{Heb98}, Lauret proves the following relation between Einstein solvmanifolds and nilsolitons.

\begin{theorem}[{\cite[Theorem 3.7]{Lau01}}]\label{thm:nilsoliton_Einstein}
    Let $(S,g)$ be a solvmanifold and $\frs$ the Lie algebra of $S$. Consider the orthogonal decomposition $\frs=\fra\oplus\frn$, where $\frn$ is the nilradical of $\frs$. Then $(S,g)$ is Einstein if and only if $(\frn,g|_{\frn\times\frn})$ is a nilsoliton.
\end{theorem}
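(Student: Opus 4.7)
The plan is to prove both directions by analyzing the Ricci endomorphism of $(S,g)$ in a basis adapted to the orthogonal decomposition $\frs = \fra \oplus \frn$. The essential ingredient is a block formula that expresses $\ric_\frs|_{\frn\times\frn}$ in terms of $\ric_\frn$ plus correction terms coming from the action of $\fra$ on $\frn$ and from the mean curvature vector $H \in \fra$ defined by $\langle H, A\rangle = \Trace\ad(A)$. First I would invoke the standardness theorem of Lauret cited in the excerpt to reduce to the case in which $\fra$ is abelian, which will kill several cross terms in the Koszul formula and simplify the computation considerably.

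Next I would derive the explicit block decomposition of the Ricci endomorphism. Using the Koszul formula on a left-invariant orthonormal frame, one obtains for $X,Y\in\frn$ an identity of the shape
\[
\ric_\frs(X,Y) \;=\; \ric_\frn(X,Y) \;-\; \langle [H,X], Y\rangle \;-\; \sum_i \langle [A_i,X]_\frm, [A_i,Y]_\frm\rangle_{\mathrm{sym/antisym}},
\]
where $\{A_i\}$ is an orthonormal basis of $\fra$ and the last sum packages the contribution of the symmetric and antisymmetric parts of $\ad(A_i)|_\frn$. The key algebraic observation is that $\ad(H)|_\frn$ and the analogous contributions of $\ad(A_i)$ are derivations of $\frn$, because $\fra$ normalizes the nilradical. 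I would also compute the mixed block $\ric_\frs(\fra,\frn)$ and the $\fra$-block $\ric_\frs|_{\fra\times\fra}$ to confirm that the Einstein condition on the full Lie algebra forces the cross block to vanish and determines the eigenvalues of $\ad(H)$.

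For the forward direction, I would then substitute the Einstein condition $\ric_\frs = \lambda\,\Id$ into the $\frn$-block formula. Rearranging gives $\ric_\frn = \lambda\,\Id_\frn + D$, where $D$ is explicitly the sum of the derivations identified above. This is precisely the algebraic Ricci soliton equation \eqref{eq:alg_Ricci_sol}, so $(\frn, g|_{\frn\times\frn})$ is a nilsoliton. For the converse, starting from a nilsoliton $(\frn, g_\frn)$ with $\ric_\frn = \lambda\,\Id + D$ and $D\in\Der(\frn)$, I would reconstruct $\frs = \fra \oplus \frn$ by taking $\fra$ to be an abelian Lie algebra of symmetric derivations containing $D$ (after possibly rescaling by the mean curvature trace condition to pin down $H$), equip it with the natural inner product, and check that the resulting left-invariant metric on $S$ satisfies the Einstein equation by running the same block formula in reverse.

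The main obstacle I anticipate is twofold. Algebraically, one must verify that the correction term appearing in the $\frn$-block is genuinely a derivation of $\frn$; this requires using the Jacobi identity together with the fact that $\fra$ normalizes $\frn$ and, crucially, that the symmetric parts of $\ad(A)|_\frn$ are themselves derivations (a consequence of standardness and the Einstein condition, rather than a triviality). Geometrically, the backward construction is the more delicate half because one must choose $\fra$ correctly so that the mean curvature vector and the eigenvalues of $\ad(H)|_\frn$ match the spectrum of $D$; this involves a normalization argument that I would model on Heber's rank-one reduction.
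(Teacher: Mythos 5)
The paper offers no proof of this statement---it is quoted from \cite[Theorem 3.7]{Lau01}---so your proposal has to stand on its own, and as written it reduces the theorem to precisely its hardest ingredient and then assumes it. Your setup is fine: with the standard (orthogonal, $\fra$ abelian) decomposition and an orthonormal basis $\{A_k\}$ of $\fra$, the formula $\ric=R-\tfrac12 B-\ad(H)^s$ gives, for $X\in\frn$, $\Ric_\frs(X,X)=\Ric_\frn(X,X)-g([H,X],X)+\tfrac12\sum_k g\big([\ad(A_k)|_\frn,(\ad(A_k)|_\frn)^*]X,X\big)$, so the Einstein condition yields $\ric_\frn=\lambda\Id+D$ with $D=(\ad(H)|_\frn)^s-\tfrac12\sum_k[\ad(A_k)|_\frn,(\ad(A_k)|_\frn)^*]$. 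The genuine gap is the claim that $D\in\Der(\frn)$. This does not follow from ``the Jacobi identity together with the fact that $\fra$ normalizes $\frn$'': the facts that each $\ad(A)|_\frn$ is a normal operator and that $(\ad(H)|_\frn)^s$ is a derivation for a standard Einstein solvmanifold are exactly the deep structural results of Heber (reflected in conditions (3)--(4) of Theorem \ref{thm:nilsoliton_solvsoliton}), and their proofs use variational/trace-form arguments about the Einstein condition, not a Koszul-formula manipulation. Your forward direction therefore has no argument for its key step.

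The converse direction also quietly proves a different statement. As quoted, the theorem asserts that the \emph{given} $(S,g)$ is Einstein whenever $(\frn,g|_{\frn\times\frn})$ is a nilsoliton; taken literally this is false without extra hypotheses on $\fra$ (e.g.\ $\frn$ abelian with flat metric is trivially a nilsoliton, but a one-dimensional $\fra$ acting by a non-normal derivation does not give an Einstein $\frs$; compare conditions (2)--(4) of Theorem \ref{thm:nilsoliton_solvsoliton} and the normalization $g(A,A)=-\tfrac1\lambda\Trace(\ad(A)^s\circ\ad(A)^s)$). What you propose---constructing \emph{some} Einstein solvable extension of a given nilsoliton---is the correct reading of Lauret's actual theorem, but then the substance lies in choosing $\fra$ and its inner product via that normalization and verifying the $\fra\times\fra$ and mixed blocks of the Einstein equation, which you only gesture at via ``Heber's rank-one reduction.'' So the skeleton (standardness plus the block Ricci formula) is sound, but both directions are missing their essential content.
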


When $\fra$ in $\frs=\fra\oplus\frn$ is one-dimensional, Theorem \ref{thm:nilsoliton_Einstein} says that if $(S,g)$ is an Einstein solvmanifold, then $S$ admits a codimension one nilsoliton. This was extended to the unimodular case in \cite{LL14} (see also \cite{Tho24}). That is, if $(S,g)$ is an Einstein solvmanifold, then $S$ admits a unimodular codimension one closed subgroup $S_0$ which is a solvsoliton with the induced metric. Conversely, if $(S_0,g_0)$ is a solvsoliton, where $S_0$ is an unimodular solvable Lie group, then there is a semidirect product $S=\R\ltimes S_0$ which admits a left-invariant Einstein metric extending $g_0$.\medskip
    
Finally, Lauret generalized Theorem \ref{thm:nilsoliton_Einstein} in \cite{Lau11} to the case of solvsolitons and nilsolitons. In particular, he characterized the existence of solvsolitons from the existence of nilsolitons.

\begin{theorem}[{\cite[Theorem 4.8]{Lau11}}]\label{thm:nilsoliton_solvsoliton}
    Let $(S,g)$ be a solvmanifold and $\frs$ the Lie algebra of $S$. Consider the orthogonal decomposition $\frs=\fra\oplus\frn$, where $\frn$ is the nilradical of $\frs$. Then $(S,g)$ is a solvsoliton, i.e.\ $\ric=\lambda\Id+D$ for some $\lambda<0$ and $D\in\Der(\frs)$, if and only if \begin{enumerate}
        \item $(\frn,g|_{\frn\times\frn})$ is a nilsoliton.
        \item $\fra$ is abelian.
        \item $\ad(A)$ is a normal operator for all $A\in\fra$.
        \item $g(A,A)=-\tfrac{1}{\lambda}\Trace(\ad(A)^s\circ\ad(A)^s)$ for all $A\in\fra$, where $\ad(A)^s:=\tfrac{1}{2}(\ad(A)+\ad(A)^*)$ and $\ad(A)^*$ denotes the adjoint.
    \end{enumerate}
\end{theorem}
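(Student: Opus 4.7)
The plan is to leverage the existing theory of Einstein solvmanifolds (especially Theorem~\ref{thm:nilsoliton_Einstein} together with Heber's standardness and normality theorems) by viewing a solvsoliton as the ``transverse'' part of a one-dimensional Einstein enlargement. The first step is to record the general formula for the Ricci endomorphism of a left-invariant metric on a solvable Lie group with orthogonal decomposition $\frs=\fra\oplus\frn$. This decomposes $\ric_g$ into the intrinsic Ricci of $(\frn,g|_\frn)$, corrections from the symmetric and antisymmetric parts of $\ad(A)|_\frn$ for $A\in\fra$, a mean-curvature contribution involving the vector $H\in\fra$ characterized by $g(H,\cdot)=\Trace\ad(\cdot)$, and a piece coming from the bracket on $\fra$ itself.

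For the implication ``$\Leftarrow$'', I would substitute conditions (1)--(4) into this formula. Abelianness of $\fra$ (condition 2) eliminates the $[\fra,\fra]$-term and simplifies the mean-curvature piece; normality of $\ad(A)|_\frn$ (condition 3) forces the cross-blocks of $\ric_g$ between $\fra$ and $\frn$ to vanish and tames the contribution of the skew part; the nilsoliton assumption (condition 1) controls the $\frn$-block so that $\ric_{g|_\frn}=\lambda\Id_\frn+D_\frn$ for some semisimple $D_\frn\in\Der(\frn)$ commuting with all $\ad(A)|_\frn$; finally, the trace identity (condition 4) reduces the $\fra$-block to $\lambda\Id_\fra$. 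Extending $D_\frn$ by zero on $\fra$ and using the commutation with $\ad(\fra)|_\frn$ yields a derivation $D\in\Der(\frs)$ certifying the solvsoliton equation.

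For the converse ``$\Rightarrow$'', starting from $\ric_g=\lambda\Id+D$ with $D\in\Der(\frs)$ and $\lambda<0$, one first observes that $D$ is symmetric with respect to $g$ (since $\ric_g$ and $\Id$ are) and that it preserves the characteristic ideal $\frn$. The key construction is the one-dimensional extension $\frs':=\R A_0\ltimes\frs$ with $\ad(A_0)|_\frs=-\tfrac{1}{\lambda}D$, equipped with an adapted inner product $g'$ chosen so that $(S',g')$ becomes Einstein with the same constant $\lambda$. Applying Theorem~\ref{thm:nilsoliton_Einstein} to $(S',g')$ immediately yields condition (1). Heber's theorems on standard Einstein solvmanifolds then imply that $\fra':=\R A_0\oplus\fra$ is abelian and that $\ad(A')$ is normal for all $A'\in\fra'$, from which conditions (2) and (3) follow by restriction. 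Condition (4) emerges as precisely the trace identity that must hold on $\fra$ for $g'$ to be Einstein along the $\fra'$-direction.

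The main obstacle is the construction and verification of the Einstein enlargement $(\frs',g')$: one must choose the length of $A_0$ so that the Einstein constant matches $\lambda$ on both $\fra'$ and $\frn$, and verify that $\ad(A_0)$ is normal with respect to $g'$. In the forward direction this symmetry of $D$ must be deduced from the soliton equation, while in the backward direction condition (4) is exactly the constraint that makes the Einstein trace equation on $\fra'$ balance. Equivalently, normality (condition 3) is the algebraic requirement that $\ad(\fra)|_\frn$ and $D_\frn$ be simultaneously diagonalizable in the complexification, which is what allows the extension to be built without obstructions. These compatibility statements were the central insights of Lauret's original work.
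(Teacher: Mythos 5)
First, a point of reference: the paper does not prove this statement at all --- it is quoted from Lauret \cite[Theorem 4.8]{Lau11} --- so your attempt can only be measured against Lauret's argument, and as it stands it has genuine gaps. The most serious one is in your necessity direction. Your entire mechanism is the claim that $\frs':=\R A_0\ltimes\frs$ with $\ad(A_0)|_{\frs}=-\tfrac{1}{\lambda}D$ carries an adapted inner product making it Einstein with the same constant $\lambda$. If $D=0$ (i.e.\ $(S,g)$ is a non-unimodular Einstein solvmanifold such as $\RH^n$, which is a solvsoliton) this is false: your extension is then the Riemannian product $\R\times S$, whose Ricci endomorphism has a zero eigenvalue, so no choice of the length of $A_0$ makes it Einstein; that case has to be handled by the Einstein structure theory directly, and you make no case split. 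For $D\neq 0$ the statement is essentially true, but verifying it requires exactly the ingredients the theorem is supposed to deliver: vanishing of the mixed $\fra$--$\frn$ Ricci terms, $[A_0,\fra]=0$ (i.e.\ $D|_{\fra}=0$, which you do not know in advance), commutation of $D$ with $\ad(A)|_{\frn}$, identification of the nilradical of $\frs'$ with $\frn$, and the trace balance that is condition (4). You explicitly defer all of this as ``the main obstacle'', which means the key step of the proof is missing and the argument is at risk of circularity; note also that the extension results recalled in Section \ref{sec:preliminaries_solv} after \cite{LL14} are stated for \emph{unimodular} solvsolitons precisely because of these difficulties, and that abelianness of $\frn^\perp$ for Einstein metrics is Lauret's standardness theorem \cite{Lau10}, not one of Heber's hypotheses-free results.

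There is also a concrete error in your sufficiency direction: the certifying derivation is not ``$D_\frn$ extended by zero on $\fra$''. Using $\ric=R-\tfrac12 B-\ad(H)^s$ one gets, once normality has killed the commutator terms, $\ric|_{\frn}=\ric_{\frn}-\ad(H)^s|_{\frn}$, so the derivation in $\ric=\lambda\Id+D$ must be $D|_{\frn}=D_{\frn}-\ad(H)^s|_{\frn}$ and $D|_{\fra}=0$. A minimal example makes this visible: take $\frs=\R A\ltimes\R^2$ with orthonormal basis $A,e_1,e_2$ and $\ad(A)=\diag(1,2)$; then $\ric=\diag(-5,-3,-6)=-5\Id+\diag(0,2,-1)$, while the nilsoliton derivation of the flat nilradical for $\lambda=-5$ is $5\,\Id_{\R^2}$, not $\diag(2,-1)$. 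Moreover, to know that $D_{\frn}-\ad(H)^s|_{\frn}$ (extended by zero) is a derivation of $\frs$ one needs the lemma --- central in Lauret's proof and absent from your sketch --- that normality of $\ad(A)$ forces its symmetric and skew-symmetric parts restricted to $\frn$ to be derivations of $\frn$ compatible with $D_{\frn}$ and with $\ad(\fra)|_{\frn}$. Until the Einstein-enlargement claim is proved (with the $D=0$ case treated separately) and the derivation in the sufficiency direction is correctly identified and shown to be a derivation, the proposal does not constitute a proof.
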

    
All the known examples of quaternionic Kähler homogeneous manifolds of negative scalar curvature admit a simply transitive action of a completely solvable Lie group \cite{Ale75,Cor96}. These are known as \emph{Alekseevsky spaces}. In fact, it was recently shown that these are precisely all quaternionic Kähler homogeneous manifolds of negative scalar curvature \cite{BL23}. Combining these results we conclude that all quaternionic Kähler homogeneous manifolds of negative scalar curvature are Einstein solvmanifolds.\medskip

Therefore, due to the results mentioned so far, given a quaternionic Kähler homogeneous manifold we can always find an unimodular hypersurface admitting a solvsoliton. However, the setting we will consider is quite different. We study the existence of algebraic Ricci solitons on hypersurface orbits coming from a cohomogeneity one group action.


\section{The Ricci curvature of a hypersurface in an Einstein manifold}\label{sec:Ricci_curvature}

Let $K$ be a smooth manifold and consider the smooth manifold $\bar N := (0,\infty)\times K$. Write $\rho\colon \bar N\to (0,\infty)$ for the canonical projection. On $\bar N$ we suppose that we have an Einstein metric $g$ of the form $$g = f(\rho)\d\rho^2 + g_\rho,$$ where $g_\rho$ is a Riemannian metric on $\bar N_{\rho}:=\{\rho\}\times K$ and $f\colon \bar N\to(0,\infty)$ is a smooth positive function depending only on $\rho$. We may think of $g_\rho$ as a one-parameter family of Riemannian metrics on $K$ depending on $\rho$. In this section we give a general formula for the Ricci tensor of the hypersurface $\bar N_\rho$. Similar computations appear in a different context in \cite{Koi81}. Of course, in the case when $\rho$ is a distance function, i.e.\ $f\equiv 1$, then the computations that follow are classical, but we nevertheless work out the formulas for general $f$, since these then readily apply to the family of quaternionic K\"ahler metrics we are interested in.
  
\subsection{Ricci tensor of \texorpdfstring{$\bar N_\rho$}{N} in general}

\begin{lemma}\label{Lem:Defh}
    The bilinear form $h\in \Gamma(T^*\bar N\otimes T^*\bar N)$ given by $h(X,Y) := g(\nabla_X\pd{}{\rho},Y)$ is symmetric, i.e.\ $g(\nabla_X\pd{}{\rho},Y) = g(\nabla_Y\pd{}{\rho},X)$.
\end{lemma}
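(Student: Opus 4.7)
The plan is to recognize that the asymmetry of $h$ is, up to sign, the exterior derivative of the $g$-dual one-form of $\partial_\rho$, and then observe that this one-form is manifestly closed for our specific warped-product metric.

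More precisely, I would first compute $(\partial_\rho)^\flat := g(\partial_\rho,\cdot)$. Because $g$ decomposes as $f(\rho)\d\rho^2 + g_\rho$, the vector field $\partial_\rho$ is $g$-orthogonal to every vector tangent to $\bar N_\rho$ (i.e.\ to $\ker\d\rho$), while $g(\partial_\rho,\partial_\rho) = f(\rho)$. This forces the identification $(\partial_\rho)^\flat = f(\rho)\,\d\rho$. Since $f$ depends only on $\rho$, one has $\d\bigl(f(\rho)\,\d\rho\bigr) = f'(\rho)\,\d\rho\wedge\d\rho = 0$, so $(\partial_\rho)^\flat$ is a closed one-form on $\bar N$.

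Next, I would invoke the standard relation between the Levi-Civita connection and the exterior derivative of a one-form $\omega$, namely $\d\omega(X,Y) = (\nabla_X\omega)(Y) - (\nabla_Y\omega)(X)$. Applying this to $\omega = (\partial_\rho)^\flat$ and using metric compatibility $(\nabla_X\omega)(Y) = g(\nabla_X\partial_\rho, Y)$, we obtain
\begin{equation*}
    0 = \d(\partial_\rho)^\flat(X,Y) = g(\nabla_X\partial_\rho,Y) - g(\nabla_Y\partial_\rho,X) = h(X,Y) - h(Y,X),
\end{equation*}
which is precisely the claimed symmetry.

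There is essentially no obstacle: the argument is a two-line manipulation once one notices that $(\partial_\rho)^\flat$ is closed. The only mild subtlety is that $\partial_\rho$ need not be a distance-function gradient when $f\not\equiv 1$, so one should not appeal to a Hessian interpretation directly; using the closedness of $f(\rho)\d\rho$ together with the $(\nabla\omega)$-formula sidesteps this and handles the general $f$ uniformly.
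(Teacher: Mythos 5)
Your argument is correct and essentially the same as the paper's: the paper exhibits the primitive $F=\int f\,\d\rho$, so that $(\partial_\rho)^\flat=\d F$ and $h=\operatorname{Hess}F$ is symmetric, while you verify directly that $(\partial_\rho)^\flat=f(\rho)\,\d\rho$ is closed and conclude via $\d\omega(X,Y)=(\nabla_X\omega)(Y)-(\nabla_Y\omega)(X)$ — the same underlying observation. Your closing caveat is unnecessary: even though $\rho$ is not a distance function, $\partial_\rho$ is still the gradient of $F$, so the Hessian interpretation used in the paper is perfectly legitimate.
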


\begin{proof}
    Consider the function $F = \int f\d\rho\colon(0,\infty)\to\R$. Then $F' = f$, so that, viewed as a function on $\bar N$, we have $\d F = f\d\rho$. It follows that $\nabla F = \pd{}{\rho}$, and $h$ is by definition the Hessian of $F$, hence symmetric.
\end{proof}
  
\begin{lemma}\label{Lem:nablarhorho}
    We have $\nabla_{\pd{}{\rho}}\pd{}{\rho} = \frac{f'}{2f}\pd{}{\rho}$.
\end{lemma}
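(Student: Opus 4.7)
The plan is to compute $\nabla_{\pd{}{\rho}}\pd{}{\rho}$ by testing it against $\pd{}{\rho}$ itself and against vector fields tangent to the fibers $\bar N_\rho$, since together these span $T\bar N$ and are orthogonal with respect to $g$. This reduces the computation to two short applications of metric compatibility combined with the symmetry of $h$ established in Lemma \ref{Lem:Defh}.

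First I would record that $g\bigl(\pd{}{\rho},\pd{}{\rho}\bigr)=f(\rho)$. Differentiating this along $\pd{}{\rho}$ and using metric compatibility gives
$$2g\!\left(\nabla_{\pd{}{\rho}}\pd{}{\rho},\pd{}{\rho}\right)=\pd{}{\rho}f=f',$$
so the $\pd{}{\rho}$-coefficient of $\nabla_{\pd{}{\rho}}\pd{}{\rho}$, obtained by dividing by $2f=2g\bigl(\pd{}{\rho},\pd{}{\rho}\bigr)$, is exactly $\frac{f'}{2f}$.

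For the tangential component, I would pick an arbitrary vector field $Y$ tangent to a fiber and apply Lemma \ref{Lem:Defh} to write
$$g\!\left(\nabla_{\pd{}{\rho}}\pd{}{\rho},Y\right)=h\!\left(\pd{}{\rho},Y\right)=h\!\left(Y,\pd{}{\rho}\right)=g\!\left(\nabla_Y\pd{}{\rho},\pd{}{\rho}\right)=\tfrac{1}{2}Y\!f,$$
and the final expression vanishes because $f$ depends only on $\rho$ while $Y$ is vertical. Combining the two computations produces the stated identity. I do not anticipate any serious obstacle: once the orthogonal decomposition is set up, everything is a one-line consequence of metric compatibility and the previous lemma.
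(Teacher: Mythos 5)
Your proposal is correct and follows essentially the same route as the paper: compute the $\pd{}{\rho}$-component via metric compatibility from $g\bigl(\pd{}{\rho},\pd{}{\rho}\bigr)=f$, then show the tangential component vanishes. The only (harmless) variation is that for the tangential step you invoke the symmetry of $h$ from Lemma \ref{Lem:Defh}, whereas the paper argues directly with torsion-freeness and a fiber-tangent field commuting with $\pd{}{\rho}$; both reduce to $g\bigl(\nabla_Y\pd{}{\rho},\pd{}{\rho}\bigr)=\tfrac12 Y(f)=0$.
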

  
  \begin{proof}
    First of all we have 
    \begin{equation}\label{Eq:nablarhorho}
      g\left(\nabla_{\pd{}{\rho}}\pd{}{\rho},\pd{}{\rho}\right) = \frac{1}{2}\pd{}{\rho}g\left(\pd{}{\rho},\pd{}{\rho}\right)   = \frac{1}{2}f'(\rho) = g\left(\frac{f'}{2f}\pd{}{\rho},\pd{}{\rho}\right). 
    \end{equation}
    To show that $\nabla_{\pd{}{\rho}}\pd{}{\rho} = \frac{f'}{2f} \pd{}{\rho}$ it suffices to show $\nabla_{\pd{}{\rho}}\pd{}{\rho}\perp \ker (\d\rho)$. To this end, we pick a vector field $X$ which is tangent to the fibers of $\rho$, i.e.\ $\d\rho(X)=0$. Then $g(X,\pd{}{\rho}) = 0$ and since $\bar N$ is a product, we may assume that $[\pd{}{\rho},X]=0$.\medskip
    
    It follows, since $\nabla$ is torsion-free:
    \begin{align*}
        g\left(\nabla_{\pd{}{\rho}}\pd{}{\rho},X\right) &= -g\left(\pd{}{\rho}, \nabla_{\pd{}{\rho}}X\right) = -g\left(\pd{}{\rho}, \nabla_X\pd{}{\rho}\right)= -\frac{1}{2}X(f) =0.
    \end{align*}
  \end{proof}

  To state the next lemma, we use the following notation. If $\alpha\in\Gamma(T^*\bar N\otimes T^*\bar N)$ is a symmetric bilinear form with associated endomorphism $A$, i.e.\ $\alpha(\cdot,\cdot) = g(A\cdot,\cdot)$ we write $$\alpha^2(\cdot,\cdot) = g(A^2\cdot,\cdot) = g(A\cdot,A\cdot) = \alpha(\cdot, A\cdot)\in\Gamma(T^*\bar N\otimes T^*\bar N).$$

    Note that if $\{E_i\}$ is an orthonormal local frame of $T\bar N$, then $AX= \sum_ig(AX,E_i)E_i = \sum_i\alpha(X,E_i)E_i$, so that 
  $$\alpha^2(X,X) = g(AX,AX) = \sum_{i,j}g(\alpha(X,E_i)E_i,\alpha(X,E_j)E_j) = \sum_i\alpha(X,E_i)^2.$$ 
  
  In particular, for the symmetric bilinear form $h$ from Lemma \ref{Lem:Defh}, we have $h(\cdot,\cdot)= g(\nabla_{\cdot}\pd{}{\rho},\cdot)$ so that $$h^2(\cdot,\cdot) = g\left(\nabla_{\cdot}\pd{}{\rho},\nabla_{\cdot}\pd{}{\rho}\right) = h\left(\cdot,\nabla_{\cdot}\pd{}{\rho}\right).$$
  
  \begin{lemma}\label{Lem:Comp}
    Let $X,Y\in\Gamma(T\bar N)$ be two vector fields tangent to $K$, i.e.\ such that $\d\rho(X) = 0 = \d\rho(Y)$, and suppose that $[X,\pd{}{\rho}] = 0 = [Y,\pd{}{\rho}]$. Then 
    \begin{enumerate}
        \item $h(X,Y) = g(\nabla_X\pd{}{\rho},Y) = -g(\nabla_XY,\pd{}{\rho}) = \frac{1}{2}\pd{}{\rho}g_\rho(X,Y)$.
        \item The second fundamental form $\mathrm{II}\colon T\bar N_\rho\times T\bar N_\rho\to T\bar N_\rho^\perp$ of $\bar N_\rho$ is given by $$\mathrm{II}(X,Y) = -\frac{1}{f}h(X,Y)\pd{}{\rho} = -\frac{1}{2f}\pd{}{\rho}g_\rho(X,Y)\pd{}{\rho}.$$
        \item $g(R_{\bar N}(X,\pd{}{\rho})\pd{}{\rho},X) = \frac{f'}{4f}\pd{}{\rho}g_\rho(X,X) - \frac{1}{2}\frac{\partial^2}{\partial\rho^2}g_\rho(X,X) + h^2(X,X)$.
    \end{enumerate}
  \end{lemma}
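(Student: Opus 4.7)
For part (1), I would record that the second equality is simply the product rule applied to $g(\pd{}{\rho},Y)=0$, namely $0 = X\,g(\pd{}{\rho},Y) = g(\nabla_X\pd{}{\rho},Y) + g(\pd{}{\rho},\nabla_XY)$. For the third equality I would differentiate $g_\rho(X,Y)=g(X,Y)$ along $\pd{}{\rho}$, using $[\pd{}{\rho},X]=[\pd{}{\rho},Y]=0$ to convert $\nabla_{\pd{}{\rho}}X$ and $\nabla_{\pd{}{\rho}}Y$ into $\nabla_X\pd{}{\rho}$ and $\nabla_Y\pd{}{\rho}$ respectively, and then invoke the symmetry of $h$ from Lemma \ref{Lem:Defh} to collect $2h(X,Y)$.

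For part (2), I would observe that, since $g(\pd{}{\rho},\pd{}{\rho})=f$, the unit normal of $\bar N_\rho$ is $\nu = f^{-1/2}\pd{}{\rho}$. The second fundamental form is the normal projection of $\nabla_XY$, which gives $\mathrm{II}(X,Y) = g(\nabla_XY,\nu)\nu = f^{-1}g(\nabla_XY,\pd{}{\rho})\pd{}{\rho}$. Plugging in the formula from part (1) produces the claimed expression.

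For part (3), I would expand the Riemann tensor as $R_{\bar N}(X,\pd{}{\rho})\pd{}{\rho} = \nabla_X\nabla_{\pd{}{\rho}}\pd{}{\rho} - \nabla_{\pd{}{\rho}}\nabla_X\pd{}{\rho}$, the bracket term vanishing by hypothesis. The first summand I would compute via Lemma \ref{Lem:nablarhorho}: because $\frac{f'}{2f}$ depends only on $\rho$ and $X(\rho)=0$, it pulls through the covariant derivative, giving $\nabla_X\nabla_{\pd{}{\rho}}\pd{}{\rho} = \frac{f'}{2f}\nabla_X\pd{}{\rho}$, whose inner product with $X$ produces $\frac{f'}{2f}h(X,X) = \frac{f'}{4f}\pd{}{\rho}g_\rho(X,X)$ by part (1). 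For the second summand I would differentiate $g(\nabla_X\pd{}{\rho},X)=h(X,X)$ along $\pd{}{\rho}$, yielding
\[
\pd{}{\rho}h(X,X) = g(\nabla_{\pd{}{\rho}}\nabla_X\pd{}{\rho},X) + g(\nabla_X\pd{}{\rho},\nabla_{\pd{}{\rho}}X).
\]
Exchanging $\nabla_{\pd{}{\rho}}X$ for $\nabla_X\pd{}{\rho}$ via $[X,\pd{}{\rho}]=0$ identifies the last term as $h^2(X,X)$, and by part (1) one has $\pd{}{\rho}h(X,X) = \tfrac{1}{2}\frac{\partial^2}{\partial\rho^2}g_\rho(X,X)$. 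Combining everything with the correct signs gives the stated formula.

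The steps are essentially mechanical unwindings of the Koszul/torsion-free identities, so I do not anticipate a genuine obstacle. The only point requiring a bit of care is keeping the scalar factor $f$ in the right places: both the unit-normal normalization in (2) and the non-geodesic term $\frac{f'}{2f}\pd{}{\rho}$ in (3) must be handled explicitly, since the more familiar case $f\equiv 1$ makes them invisible.
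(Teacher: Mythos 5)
Your proposal is correct and follows essentially the same route as the paper's proof: metric compatibility for the second equality in (1), the differentiation of $g_\rho(X,Y)$ (the paper phrases it via the Koszul formula, you via symmetry of $h$ from Lemma \ref{Lem:Defh} — the same computation), the normal projection with $\nu=f^{-1/2}\pd{}{\rho}$ in (2), and the expansion of $R_{\bar N}(X,\pd{}{\rho})\pd{}{\rho}$ using Lemma \ref{Lem:nablarhorho} together with $\pd{}{\rho}h(X,X)=\tfrac12\frac{\partial^2}{\partial\rho^2}g_\rho(X,X)$ in (3). No gaps.
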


  \begin{proof}
  \begin{enumerate}
    
    \item Since $g(\pd{}{\rho},Y) = 0$ and $\nabla$ is metric, it follows that $g(\nabla_X\pd{}{\rho},Y) = -g(\nabla_XY,\pd{}{\rho})$. Since $g(\pd{}{\rho},X) = 0 =g(\pd{}{\rho},Y)$ and $[X,\pd{}{\rho}] = 0 = [Y,\pd{}{\rho}]$ the Koszul formula implies: 
    \begin{align*}
    2h(X,Y) &= 2g\left(\nabla_X\pd{}{\rho},Y\right) = \pd{}{\rho} g_\rho(X,Y).
    \end{align*}
    
    \item By definition we have $\mathrm{II}(X,Y) = \left(\nabla_XY\right)^\perp = g(\nabla_XY,\nu)\nu$, where $\nu = \frac{1}{\sqrt{f}}\pd{}{\rho}$ is the unit normal. Thus, using part 1.:
    $$\mathrm{II}(X,Y) = \frac{1}{f}g\left(\nabla_XY,\pd{}{\rho}\right)\pd{}{\rho} = - \frac{1}{f}h(X,Y)\pd{}{\rho} = -\frac{1}{2f}\pd{}{\rho}g_\rho(X,Y)\pd{}{\rho}.$$
    
    \item We compute using part 2.\ and Lemma \ref{Lem:nablarhorho}:
    \begin{align*}
     g\left(R_{\bar N}\left(X,\pd{}{\rho}\right)\pd{}{\rho},X\right) &= g\left(\nabla_X\nabla_{\pd{}{\rho}}\pd{}{\rho}-\nabla_{\pd{}{\rho}}\nabla_X\pd{}{\rho} -\nabla_{\left[X,{\pd{}{\rho}}\right]}\pd{}{\rho},X\right) \\
     &=g\left(\nabla_X\left(\frac{f'}{2f}\pd{}{\rho}\right),X\right)-g\left(\nabla_{\pd{}{\rho}}\nabla_X\pd{}{\rho},X\right) \\
     &=\frac{f'}{2f}g\left(\nabla_X\pd{}{\rho},X\right)-\pd{}{\rho}g\left(\nabla_X\pd{}{\rho},X\right)+ g\left(\nabla_X\pd{}{\rho},\nabla_{\pd{}{\rho}}X\right)\\
     &=\frac{f'}{4f}\pd{}{\rho}g_\rho(X,X) - \frac{1}{2}\frac{\partial^2}{\partial\rho^2}g_\rho(X,X) + h\left(X,\nabla_X\pd{}{\rho}\right)\\
     &=\frac{f'}{4f}\pd{}{\rho}g_\rho(X,X) - \frac{1}{2}\frac{\partial^2}{\partial\rho^2}g_\rho(X,X) + h^2(X,X).
    \end{align*}
    \end{enumerate}
  \end{proof}
  
  \begin{lemma}\label{lemma:Ric_Nrho_II}
    Suppose that $(\bar N,g)$ is an Einstein manifold with Einstein constant $\lambda\in\R$, i.e.\ $\mathrm{Ric}(g) = \lambda g$. Consider the hypersurface $\bar N_\rho = \{\rho\}\times K$ with induced metric $g_\rho$, unit normal field $\nu\in\Gamma(T\bar N_\rho^\perp)$ and second fundamental form $\mathrm{II}\in\Gamma(T^*\bar N_\rho\otimes T^*\bar N_\rho\otimes T\bar N_\rho^\perp)$. Then for any $p\in \bar N_\rho$ and $X\in T_p\bar N_\rho$ we have, with an orthonormal basis $\{E_i\}$ of $T_p\bar N_\rho$:
  $$\Ric_{\bar N_\rho}(X,X) = \lambda g_\rho(X,X) + g(\mathrm{II}(X,X),\mathrm{tr}(\mathrm{II})) -\sum_ig(\mathrm{II}(X,E_i),\mathrm{II}(X,E_i)) -\frac{1}{f}\Rm_{\bar N}\left(X,\pd{}{\rho},\pd{}{\rho},X\right).$$
  \end{lemma}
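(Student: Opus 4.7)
The plan is to prove the stated formula by combining the Gauss equation with the Einstein condition on the ambient manifold. This is essentially a textbook computation, but the key is to track signs carefully and to express the normal-direction contribution using the explicit unit normal $\nu = \frac{1}{\sqrt{f}}\pd{}{\rho}$.

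First I would record the Gauss equation for the hypersurface $\bar N_\rho \subset \bar N$. Splitting $\nabla^{\bar N}_X Y = \nabla^{\bar N_\rho}_X Y + \mathrm{II}(X,Y)$ for $X,Y$ tangent to $\bar N_\rho$, a direct computation of $\nabla^{\bar N}_X\nabla^{\bar N}_Y Z$ modulo tangent pieces and use of $\langle A_\xi X, Y\rangle = \langle \mathrm{II}(X,Y),\xi\rangle$ yields
\begin{equation*}
\Rm_{\bar N_\rho}(X,Y,Z,W) = \Rm_{\bar N}(X,Y,Z,W) + g(\mathrm{II}(X,W),\mathrm{II}(Y,Z)) - g(\mathrm{II}(Y,W),\mathrm{II}(X,Z))
\end{equation*}
for tangent vectors $X,Y,Z,W$.

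Next I would trace this identity. Fix $p \in \bar N_\rho$, pick an orthonormal basis $\{E_i\}$ of $T_p\bar N_\rho$, and set $Y = Z = E_i$, $W = X$, and sum over $i$. Since $\sum_i \mathrm{II}(E_i,E_i) = \mathrm{tr}(\mathrm{II})$, we obtain
\begin{equation*}
\Ric_{\bar N_\rho}(X,X) = \sum_i \Rm_{\bar N}(X,E_i,E_i,X) + g(\mathrm{II}(X,X),\mathrm{tr}(\mathrm{II})) - \sum_i g(\mathrm{II}(X,E_i),\mathrm{II}(X,E_i)).
\end{equation*}

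Then I would bring in the Einstein condition. Completing $\{E_i\}$ to an orthonormal basis $\{E_i\}\cup\{\nu\}$ of $T_p\bar N$, the ambient Ricci tensor satisfies
\begin{equation*}
\lambda g(X,X) = \Ric_{\bar N}(X,X) = \sum_i \Rm_{\bar N}(X,E_i,E_i,X) + \Rm_{\bar N}(X,\nu,\nu,X),
\end{equation*}
so $\sum_i \Rm_{\bar N}(X,E_i,E_i,X) = \lambda g_\rho(X,X) - \Rm_{\bar N}(X,\nu,\nu,X)$. Finally, substituting $\nu = f^{-1/2}\pd{}{\rho}$ gives $\Rm_{\bar N}(X,\nu,\nu,X) = \frac{1}{f}\Rm_{\bar N}(X,\pd{}{\rho},\pd{}{\rho},X)$, and plugging this back into the trace of the Gauss equation delivers the formula exactly as stated.

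The computation is routine, so there is no serious obstacle. The only point requiring care is the sign convention in the Gauss equation — this is easy to derive from scratch using the splitting of $\nabla^{\bar N}$ relative to $\nabla^{\bar N_\rho}$ plus $\mathrm{II}$, and it is what determines whether $g(\mathrm{II}(X,X),\mathrm{tr}(\mathrm{II}))$ enters with a plus sign (it does) and whether $-\sum_i g(\mathrm{II}(X,E_i),\mathrm{II}(X,E_i))$ enters with a minus sign (it does). Note that we do not need Lemmas \ref{Lem:Defh}–\ref{Lem:Comp} for this statement, since $\mathrm{II}$ and $\Rm_{\bar N}(X,\pd{}{\rho},\pd{}{\rho},X)$ are left unevaluated in the conclusion; those lemmas will be used in the subsequent specialization.
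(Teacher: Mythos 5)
Your proposal is correct and follows essentially the same route as the paper: the Gauss equation traced over an orthonormal basis of $T_p\bar N_\rho$, combined with $\Ric_{\bar N}(X,X)=\sum_i\Rm_{\bar N}(X,E_i,E_i,X)+\Rm_{\bar N}(X,\nu,\nu,X)=\lambda g(X,X)$ and the substitution $\nu=f^{-1/2}\pd{}{\rho}$; your form of the Gauss equation agrees with the paper's after using the symmetry of $g$ in the second-fundamental-form terms. The only cosmetic difference is that you solve for $\sum_i\Rm_{\bar N}(X,E_i,E_i,X)$ and substitute, whereas the paper expands $\Ric_{\bar N}(X,X)$ into hypersurface terms and then rearranges — the same computation in a different order.
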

  
  \begin{proof}
  Let $p\in \bar N_\rho$ and $X,Y,Z,W\in T_p\bar N_\rho$. Then writing $\Rm_{\bar N}(X,Y,Z,W) = g(R_{\bar N}(X,Y)Z,W)$ and $\Rm_{\bar N_\rho}(X,Y,Z,W) = g_\rho(R_{\bar N_\rho}(X,Y)Z,W)$ we have by the Gauss equation: 
  $$\Rm_{\bar N}(X,Y,Z,W)=\Rm_{\bar N_\rho}(X,Y,Z,W)-g(\mathrm{II}(X,W),\mathrm{II}(Y,Z))+g(\mathrm{II}(X,Z),\mathrm{II}(Y,W)).$$
  Now let $\{E_i\}$ be an orthonormal basis of $T_p\bar N_\rho$, so that $\{E_i\}\cup\{\nu|_p\}$ is an orthonormal basis of $T_p\bar N$, with $\nu = \frac{1}{\sqrt{f}}\pd{}{\rho}$. Then we have 
  \begin{align*}
    \Ric_{\bar N}(X,X) &= \mathrm{tr}(V\mapsto R_{\bar N}(V,X)X)\\
    &= \sum_i \Rm_{\bar N}(X,E_i,E_i,X) + \Rm_{\bar N}(X,\nu,\nu,X)\\
    &= \sum_i\big(\Rm_{\bar N_\rho}(X,E_i,E_i,X) -g(\mathrm{II}(X,X),\mathrm{II}(E_i,E_i)) + g(\mathrm{II}(X,E_i),\mathrm{II}(E_i,X))\big)\\
    &\quad + \Rm_{\bar N}(X,\nu,\nu,X)\\
    &= \Ric_{\bar N_\rho}(X,X) - g(\mathrm{II}(X,X),\mathrm{tr}(\mathrm{II})) + \sum_i g(\mathrm{II}(X,E_i),\mathrm{II}(X,E_i)) +\frac{1}{f}\Rm_{\bar N}\left(X,\pd{}{\rho},\pd{}{\rho},X\right).
  \end{align*}

  Now $\Ric_{\bar N} = \lambda g$ and therefore we find the formula of the statement.
  \end{proof}
  
  Write $h_\rho\in\Gamma(T^*\bar N_\rho\otimes T^*\bar N_\rho)$ for the restriction of $h$ to the hypersurface $\bar{N}_\rho$. We refine the formula obtained in Lemma \ref{lemma:Ric_Nrho_II} as follows.

\begin{lemma}\label{Lem:RicNrho}
    Suppose that $(\bar N,g)$ is an Einstein manifold with Einstein constant $\lambda\in\R$, i.e.\ $\mathrm{Ric}(g) = \lambda g$. Consider the hypersurface $\bar N_\rho = \{\rho\}\times K$ with induced metric $g_\rho$, unit normal field $\nu\in\Gamma(T\bar N_\rho^\perp)$ and second fundamental form $\mathrm{II}\in\Gamma(T^*\bar N_\rho\otimes T^*\bar N_\rho\otimes T\bar N_\rho^\perp)$. Then we have
  $$\Ric_{\bar N_\rho} = \lambda g_\rho + \left(\frac{1}{4f}\mathrm{tr}\left(\pd{}{\rho}g_\rho\right)- \frac{f'}{4f^2}\right)\pd{}{\rho}g_\rho -\frac{2}{f}h_\rho^2+\frac{1}{2f}\frac{\partial^2}{\partial\rho^2}g_\rho.$$
  \end{lemma}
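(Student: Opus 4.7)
The plan is to derive the refined formula by direct substitution of the formulas from Lemma \ref{Lem:Comp} into the expression given by Lemma \ref{lemma:Ric_Nrho_II}. There is no new geometric input required; everything reduces to bookkeeping once one unwinds the second fundamental form and the sectional-curvature-like term in terms of $\rho$-derivatives of $g_\rho$.

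I would start by polarization: it suffices to establish the identity $\Ric_{\bar N_\rho}(X,X)$ for all $X\in T_p\bar N_\rho$. Fix an orthonormal basis $\{E_i\}$ of $(T_p\bar N_\rho, g_\rho)$. From part 2 of Lemma \ref{Lem:Comp} we have $\mathrm{II}(X,Y)=-\frac{1}{f}h(X,Y)\pd{}{\rho}$, so
\[
\mathrm{tr}(\mathrm{II})=\sum_i\mathrm{II}(E_i,E_i)=-\frac{1}{f}\mathrm{tr}(h_\rho)\pd{}{\rho},
\]
and using $h(E_i,E_i)=\tfrac{1}{2}\pd{}{\rho}g_\rho(E_i,E_i)$ gives $\mathrm{tr}(h_\rho)=\tfrac{1}{2}\mathrm{tr}(\pd{}{\rho}g_\rho)$, where the trace is taken with respect to $g_\rho$. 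Using also $g(\pd{}{\rho},\pd{}{\rho})=f$, I would then compute
\[
g(\mathrm{II}(X,X),\mathrm{tr}(\mathrm{II}))=\frac{1}{f}h(X,X)\,\mathrm{tr}(h_\rho)=\frac{1}{4f}\,\mathrm{tr}\!\left(\pd{}{\rho}g_\rho\right)\pd{}{\rho}g_\rho(X,X),
\]
and similarly
\[
\sum_i g(\mathrm{II}(X,E_i),\mathrm{II}(X,E_i))=\frac{1}{f}\sum_i h(X,E_i)^2=\frac{1}{f}h_\rho^2(X,X),
\]
where the last identity follows from the discussion preceding Lemma \ref{Lem:Comp} identifying $h^2(X,X)=\sum_i h(X,E_i)^2$.

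For the last term I would substitute part 3 of Lemma \ref{Lem:Comp}:
\[
\frac{1}{f}\Rm_{\bar N}\!\left(X,\pd{}{\rho},\pd{}{\rho},X\right)=\frac{f'}{4f^2}\pd{}{\rho}g_\rho(X,X)-\frac{1}{2f}\frac{\partial^2}{\partial\rho^2}g_\rho(X,X)+\frac{1}{f}h_\rho^2(X,X).
\]
Plugging all three pieces into the formula of Lemma \ref{lemma:Ric_Nrho_II}, the two $h_\rho^2$-contributions (one from the $\sum_ig(\mathrm{II}(X,E_i),\mathrm{II}(X,E_i))$ term and one from the curvature term, both with a minus sign) combine to $-\tfrac{2}{f}h_\rho^2(X,X)$, the $\pd{}{\rho}g_\rho(X,X)$ coefficients combine to $\tfrac{1}{4f}\mathrm{tr}(\pd{}{\rho}g_\rho)-\tfrac{f'}{4f^2}$, and the second derivative term contributes $\tfrac{1}{2f}\frac{\partial^2}{\partial\rho^2}g_\rho(X,X)$. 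This yields the claimed identity evaluated on $(X,X)$, and polarization finishes the proof.

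The main obstacle is purely notational: one has to keep track of the two meanings of trace (of a bilinear form $h_\rho$ via $g_\rho$ versus of the $g_\rho$-endomorphism $(g_\rho)^{-1}\pd{}{\rho}g_\rho$) and remember that $h^2$ in the paper is the bilinear form associated to the square of the endomorphism, so that $h_\rho^2(X,X)=\sum_i h(X,E_i)^2$ in any $g_\rho$-orthonormal frame. Once this is fixed, the proof is a mechanical substitution and no further geometric ingredient beyond Lemmas \ref{Lem:Comp} and \ref{lemma:Ric_Nrho_II} is needed.
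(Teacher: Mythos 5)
Your proposal is correct and follows essentially the same route as the paper: substitute the expressions from Lemma \ref{Lem:Comp} (parts 2 and 3, together with $h_\rho^2(X,X)=\sum_i h(X,E_i)^2$ and $g(\pd{}{\rho},\pd{}{\rho})=f$) into the formula of Lemma \ref{lemma:Ric_Nrho_II}, combine the two $h_\rho^2$-terms and the $\pd{}{\rho}g_\rho$-coefficients, and conclude by evaluating on $(X,X)$ and polarizing. No gap.
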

  
  \begin{proof}
  Let $p\in\bar N_\rho$ and $X\in T_p\bar N_\rho$. By Lemma \ref{Lem:Comp} part 2.\ we have $$g(\mathrm{II}(X,X),\mathrm{tr}(\mathrm{II}))=g\left(-\frac{1}{f}h_\rho(X,X)\pd{}{\rho},-\frac{1}{f}\mathrm{tr}(h_\rho)\pd{}{\rho}\right)=\frac{1}{f}h_\rho(X,X)\mathrm{tr}(h_\rho)=\frac{1}{4f}\pd{}{\rho}g_\rho(X,X)\mathrm{tr}\left(\pd{}{\rho}g_\rho\right)$$ and similarly $$\sum_ig(\mathrm{II}(X,E_i),\mathrm{II}(X,E_i))=\frac{1}{f}\sum_ih_\rho(X,E_i)h_\rho(X,E_i)=\frac{1}{f}h_\rho^2(X,X).$$

  By Lemma \ref{Lem:Comp} part 3.\ the curvature term is given by $$\Rm_{\bar N}\left(X,\pd{}{\rho},\pd{}{\rho},X\right)=\frac{f'}{4f}\pd{}{\rho}g_\rho(X,X)-\frac{1}{2}\frac{\partial^2}{\partial\rho^2}g_\rho(X,X)+h_\rho^2(X,X).$$

  Putting all these formulas together we obtain: \begin{align*}
      \mathsf{A}(X)&:=g(\mathrm{II}(X,X),\mathrm{tr}(\mathrm{II}))-\sum_ig(\mathrm{II}(X,E_i),\mathrm{II}(X,E_i))-\frac{1}{f}\Rm_{\bar N}\left(X,\pd{}{\rho},\pd{}{\rho},X\right)\\
      &\phantom{:}=\frac{1}{4f}\pd{}{\rho}g_\rho(X,X)\mathrm{tr}\left(\pd{}{\rho}g_\rho\right)-\frac{1}{f}h_\rho^2(X,X)-\frac{1}{f}\left(\frac{f'}{4f}\pd{}{\rho}g_\rho(X,X)-\frac{1}{2}\frac{\partial^2}{\partial\rho^2}g_\rho(X,X)+h_\rho^2(X,X)\right)\\
      &\phantom{:}=\frac{1}{4f}\pd{}{\rho}g_\rho(X,X)\mathrm{tr}\left(\pd{}{\rho}g_\rho\right)-\frac{2}{f}h_\rho^2(X,X)-\frac{f'}{4f^2}\pd{}{\rho}g_\rho(X,X)+\frac{1}{2f}\frac{\partial^2}{\partial\rho^2}g_\rho(X,X)\\
      &\phantom{:}=\left(\frac{1}{4f}\mathrm{tr}\left(\pd{}{\rho}g_\rho\right)-\frac{f'}{4f^2}\right)\pd{}{\rho}g_\rho(X,X) -\frac{2}{f}h_\rho^2(X,X)+\frac{1}{2f}\frac{\partial^2}{\partial\rho^2}g_\rho(X,X)
  \end{align*} and therefore $\Ric_{\bar N_\rho}(X,X)=\lambda g_\rho(X,X)+\mathsf{A}(X)$ gives us the formula from the statement.
\end{proof}

\subsection{Application to one-loop deformed hypermultiplet manifolds}

Let us consider the one-loop deformation (with $c>0$) of the non-compact quaternionic Kähler symmetric space $$\frac{\mathrm{SU}(n,2)}{\mathrm{S}(\mathrm{U}(n)\times\mathrm{U}(2))}.$$

We identify the underlying manifold $\bar N$ as 
$$\bar N = (0,\infty)\times \left(B_1(0)\times\R\times \C^n\right),$$ with $B_1(0)\subset \C^{n-1}$ the open unit ball. On $\bar N$ we have the global coordinate system $$(\rho,X^a,\tilde\phi,w^0,w^a)\in (0,\infty)\times (\C^{n-1}\times\R\times \C^n),$$ where $a=1,\ldots,n-1$ and $\norm{X}^2=\sum_a \abs{X^a}^2<1$. If $n=1$ we adopt the convention that the family $(X^a,w^a)_{1}^0$ is empty, so that for $\bar N = (0,\infty)\times \R\times\C$ the global coordinate system is just $(\rho, \tilde\phi, w^0)$. With the convention $\sum_{a=1}^0 = 0$,  the one-loop deformed metric is then for any $n\in\mathbb N$ explicitly given by
$$g_{\bar{N}}^c=\frac{1}{4\rho^2}\frac{\rho+2c}{\rho+c}\d\rho^2+g_\rho^c,$$ where \begin{align*}
    g_\rho^c&=\frac{\rho+c}{\rho}\frac{1}{1-\norm{X}^2}\left(\sum_{a=1}^{n-1}\abs{\d X^a}^2+\frac{1}{1-\norm{X}^2}\left|\sum_{a=1}^{n-1}\bar{X}^a\d X^a\right|^2\right)\\
    &\quad+\frac{1}{4\rho^2}\frac{\rho+c}{\rho+2c}\left(\d\tilde{\phi}-4\Im\left(\bar{w}^0\d w^0-\sum_{a=1}^{n-1}\bar{w}^a\d w^a\right)+\frac{2c}{1-\norm{X}^2}\Im\left(\sum_{a=1}^{n-1}\bar{X}^a\d X^a\right)\right)^2\\
    &\quad-\frac{2}{\rho}\left(\d w^0\d\bar{w}^0-\sum_{a=1}^{n-1}\d w^a\d\bar{w}^a\right)+\frac{\rho+c}{\rho^2}\frac{4}{1-\norm{X}^2}\left|\d w^0+\sum_{a=1}^{n-1}X^a\d w^a\right|^2.
\end{align*}

The metric $g_{\bar{N}}^c$ fits into the framework of the previous section with $$K = B_1(0)\times\R\times\C^n\quad\text{and}\quad f(\rho)= \frac{1}{4\rho^2}\frac{\rho+2c}{\rho+c}.$$

The metric $g^c_{\bar N}$ is quaternionic K\"ahler with reduced scalar curvature $\nu = \frac{\mathrm{scal}}{4n(n+2)} = -2$ (see \cite[Page 89]{CDS17}). In particular, it is Einstein with Einstein constant $$\lambda =\frac{\mathrm{scal}}{4n} = -2(n+2).$$

We are interested in computing the Ricci tensor of the hypersurface $\bar{N}_\rho$ with metric $g_\rho^c$. For that we want to apply Lemma \ref{Lem:RicNrho} and therefore we need to compute $\pd{}{\rho}g_\rho^c$, $\frac{\partial^2}{\partial\rho^2}g_\rho^c$ and the bilinear form $h_\rho^2$.\medskip

The level sets $\bar N_\rho$ of $\rho$ are homogeneous, thus it suffices to perform all computations at the particular point $$p_\rho = (\rho, 0)\in \bar N_{\rho} = \{\rho\}\times K$$ obtained by fixing $\rho$ and putting all other coordinates to zero. Moreover, we are only differentiating with respect to the variable $\rho$, so we may first put the other coordinates to zero, view $g_\rho^c|_{p_\rho}$ as a depending only on $\rho$, and then differentiate. We may thus work with the metric $g_\rho^c$ in the following simplified form:
\begin{align*}
  g_\rho^c&=\frac{\rho+c}{4\rho}\left(\sum_{a=1}^{n-1}(\d b^a)^2+(\d t^a)^2\right)+\frac{1}{4\rho^2}\frac{\rho+c}{\rho+2c}\d\tilde{\phi}^2\\
  &\quad+\frac{1}{2\rho}\frac{\rho+2c}{\rho}((\d \tilde\zeta_0)^2+(\d\zeta^0)^2)+\frac{1}{2\rho}\left(\sum_{a=1}^{n-1}(\d \tilde\zeta_a)^2+(\d\zeta^a)^2\right).
\end{align*}

Here we have expressed the metric $g_\rho^c$ in the real coordinates $(\rho,b^a,t^a,\tilde\phi,\tilde\zeta_0,\zeta^0,\tilde\zeta_a,\zeta^a)$, where $$X^a = \frac{1}{2}(b^a+it^a),\quad w^0 = \frac{1}{2}(\tilde\zeta_0+i\zeta^0),\quad w^a = \frac{1}{2}(\tilde\zeta_a-i\zeta^a).$$

Now we compute the derivative of the metric $g_\rho^c$: 
 \begin{align*}
		\pd{}{\rho}g_\rho^c&=-\frac{c}{4\rho^2}\left(\sum_{a=1}^{n-1}(\d b^a)^2+(\d t^a)^2\right)-\frac{1}{4\rho^3}\frac{2\rho^2+5c\rho+4c^2}{(\rho+2c)^2}\d\tilde{\phi}^2\\
		&\quad-\frac{1}{2\rho^2}\frac{\rho+4c}{\rho}((\d \tilde\zeta_0)^2+(\d\zeta^0)^2)-\frac{1}{2\rho^2}\left(\sum_{a=1}^{n-1}(\d\tilde\zeta_a)^2+(\d\zeta^a)^2\right)\\
		&=-\frac{1}{\rho}\left(h_1(\rho)\frac{\rho+c}{4\rho}\left(\sum_{a=1}^{n-1}(\d b^a)^2+(\d t^a)^2\right)+h_2(\rho)\frac{1}{4\rho^2}\frac{\rho+c}{\rho+2c}\d\tilde{\phi}^2\right)\\
		&\quad-\frac{1}{\rho}\left(h_3(\rho)\frac{1}{2\rho}\frac{\rho+2c}{\rho}((\d\tilde\zeta_0)^2+(\d\zeta^0)^2)+\frac{1}{2\rho}\left(\sum_{a=1}^{n-1}(\d\tilde\zeta_a)^2+(\d\zeta^a)^2\right)\right),
	\end{align*} where 
 $$h_1(\rho):=\frac{c}{\rho+c}>0,\quad h_2(\rho):=\frac{2\rho^2+5c\rho+4c^2}{(\rho+c)(\rho+2c)}>0,\quad h_3(\rho):=\frac{\rho+4c}{\rho+2c}>0.$$
 Note that the Gram matrix of $g_\rho^c$ is diagonal in these coordinates:
 $$g_\rho^c =\mathrm{diag}\left(\frac{\rho+c}{4\rho}\mathbbm{1}_{2n-2} , \frac{\rho +c}{4\rho^2(\rho+2c)} , \frac{\rho+2c}{2\rho^2}\mathbbm{1}_2 , \frac{1}{2\rho}\mathbbm{1}_{2n-2}\right),$$
 where we denote by $\mathbbm{1}_k$ the $k\times k$ identity matrix. Sometimes we will also write $\mathbb{O}_k$ for the $k\times k$ zero matrix. If $n=1$, then $2n-2=0$ and we adopt the convention to interpret the Gram matrix of $g_\rho^c$ as 
  $$g_\rho^c =\mathrm{diag}\left(\frac{\rho +c}{4\rho^2(\rho+2c)} , \frac{\rho+2c}{2\rho^2}\mathbbm{1}_2\right),$$
  which is consistent with our conventions of choosing coordinates on $\bar N$ explained at the beginning of this chapter. We apply analogous conventions to the various other Gram matrices that appear below and henceforth will not explicitly distinguish between the cases $n=1 $ and $n>1$.
 It follows that the Gram matrix of the bilinear form $h_\rho(\cdot,\cdot) = g_\rho^c(\nabla_{\cdot}\pd{}{\rho},\cdot)= \frac{1}{2}\pd{}{\rho}g_\rho^c$ is
 $$H_\rho = \frac{1}{2}\pd{g_\rho^c}{\rho} = -\frac{1}{2\rho}\mathrm{diag}\left(
   h_1(\rho)\frac{\rho+c}{4\rho}\mathbbm{1}_{2n-2} , h_2(\rho)\frac{\rho +c}{4\rho^2(\rho+2c)}, h_3(\rho)\frac{\rho+2c}{2\rho^2}\mathbbm{1}_2, \frac{1}{2\rho}\mathbbm{1}_{2n-2}  
 \right) = g_\rho^c A_\rho,$$
 where $A_{\rho}$ is the diagonal matrix
 \begin{equation}\label{Eq:Arho}
 A_\rho =-\frac{1}{2\rho}\mathrm{diag}\big(h_1(\rho)\mathbbm{1}_{2n-2}, h_2(\rho), h_3(\rho)\mathbbm{1}_2,\mathbbm{1}_{2n-2}\big)
 \end{equation}
 corresponding to the endomorphism $\nabla\pd{}{\rho}$.
From this computation and Lemma \ref{Lem:Comp} we deduce the following.

\begin{proposition}
The eigenvalues of the shape operator $S_\rho^c$ of the hypersurface $(\bar N_\rho,g_\rho^c)\subset (\bar N,g^c_{\bar N})$ with respect to the unit normal field $\frac{1}{\sqrt{f}}\pd{}{\rho}$ are given by $$\begin{aligned}
    \sigma_1 &= \frac{c}{\rho+c}\sqrt{\frac{\rho+c}{\rho+2c}},\\
    \sigma_2 &=  \frac{2\rho^2+5c\rho+4c^2}{(\rho+2c)(\rho+c)}\sqrt{\frac{\rho+c}{\rho+2c}},\end{aligned}\qquad\begin{aligned}
    \sigma_3 &=\frac{\rho+4c}{\rho+2c}\sqrt{\frac{\rho+c}{\rho+2c}},\\
    \sigma_4 &= \sqrt{\frac{\rho+c}{\rho+2c}},
\end{aligned}$$ where the multiplicities of $\sigma_1$ and $\sigma_4$ are $2n-2$, the multiplicity of $\sigma_2$ is $1$ and the multiplicity of $\sigma_3$ is $2$. In particular, if $c>0$, then $(\bar N_\rho,g_\rho^c)$ is strictly convex. Furthermore, the mean curvature of $(\bar N_\rho,g_\rho^c)$ is $$\mathrm{tr}(S_\rho^c) = \frac{(2n+2)\rho^2+(8n+7)c\rho+(8n+4)c^2}{(\rho+c)(\rho+2c)}\sqrt{\frac{\rho+c}{\rho+2c}}.$$
\end{proposition}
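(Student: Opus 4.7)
The approach is to identify the shape operator $S_\rho^c$ with $-f^{-1/2}A_\rho$, where $A_\rho$ is the diagonal endomorphism that appears in \eqref{Eq:Arho}, and then read off the spectral data directly from that diagonal form.

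First I would observe that for any vector field $X$ tangent to $\bar N_\rho$, the relation $H_\rho=g_\rho^c A_\rho$ together with Lemma \ref{Lem:Comp}(1) yields $\nabla_X\pd{}{\rho}=A_\rho X$; in particular $A_\rho$ is $g_\rho^c$-self-adjoint, as is already visible from \eqref{Eq:Arho}. Because $f$ depends only on $\rho$, we have $X\bigl(f^{-1/2}\bigr)=0$, so the Weingarten formula collapses to
$$S_\rho^c X=-\nabla_X\nu=-\frac{1}{\sqrt{f}}\nabla_X\pd{}{\rho}=-\frac{1}{\sqrt{f}}A_\rho X.$$
Since $f(\rho)=\frac{1}{4\rho^2}\frac{\rho+2c}{\rho+c}$, we have $1/\sqrt{f(\rho)}=2\rho\sqrt{(\rho+c)/(\rho+2c)}$, so substituting into \eqref{Eq:Arho} gives
$$S_\rho^c=\sqrt{\frac{\rho+c}{\rho+2c}}\,\mathrm{diag}\bigl(h_1(\rho)\mathbbm{1}_{2n-2},\,h_2(\rho),\,h_3(\rho)\mathbbm{1}_2,\,\mathbbm{1}_{2n-2}\bigr),$$
from which the eigenvalues $\sigma_1,\sigma_2,\sigma_3,\sigma_4$ with multiplicities $2n-2,1,2,2n-2$ can be read off. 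Homogeneity of $\bar N_\rho$ under the isometric action of $\mathrm{U}(1,n-1)\ltimes\Heis_{2n+1}$ means the computation at the basepoint $p_\rho$ determines $S_\rho^c$ on all of $\bar N_\rho$. Strict convexity for $c>0$ is then immediate from positivity of $h_1(\rho),h_2(\rho),h_3(\rho)$.

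For the mean curvature I would simply take the trace with the stated multiplicities, namely
$$\mathrm{tr}(S_\rho^c)=\sqrt{\frac{\rho+c}{\rho+2c}}\bigl((2n-2)h_1(\rho)+h_2(\rho)+2h_3(\rho)+(2n-2)\bigr),$$
and reduce the bracket to the common denominator $(\rho+c)(\rho+2c)$. The numerator then breaks into the telescoping piece $(2n-2)(\rho+2c)\bigl(c+(\rho+c)\bigr)=(2n-2)(\rho+2c)^2$ plus the residual $(2\rho^2+5c\rho+4c^2)+2(\rho+c)(\rho+4c)$; collecting yields $(2n+2)\rho^2+(8n+7)c\rho+(8n+4)c^2$, as claimed. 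The only real work in the whole proposition is this polynomial bookkeeping, which is a routine expansion; no genuine obstacle arises because all the geometric input (the explicit form of $A_\rho$ and the positivity of $h_1,h_2,h_3$) has already been assembled in the paragraphs immediately preceding the statement.
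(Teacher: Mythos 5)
Your proposal is correct and follows essentially the paper's own route: both identify $S_\rho^c=-f^{-1/2}A_\rho$ from the already-computed Gram matrix \eqref{Eq:Arho} (you via the Weingarten formula $S_\rho^cX=-\nabla_X\nu$ with $X(f)=0$, the paper via the expression for $\mathrm{II}_\rho$ in Lemma \ref{Lem:Comp}), then read off the eigenvalues and verify the trace by the same polynomial bookkeeping, and your sign convention agrees with the paper's so the convexity conclusion is identical.
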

     
 \begin{proof}
     By part (3) of Lemma \ref{Lem:Comp} we know that the second fundamental form $\mathrm{II}_\rho$ of the hypersurface $(\bar N_\rho,g_\rho^c)\subset (\bar N,g^c_{\bar N})$ evaluates on tangent vectors $X,Y$ to 
     $$\mathrm{II}_\rho(X,Y) = -\frac{1}{f}h_\rho(X,Y)\pd{}{\rho} =  g_\rho^c(S_\rho^c(X),Y)\frac{1}{\sqrt f}\pd{}{\rho},$$
     with the shape operator $S_\rho^c$. At the point $p_\rho$ we see from the above discussion that 
     $$g_\rho^c(S_\rho^c(X),Y) = -\frac{1}{\sqrt f}h_\rho(X,Y) = g_\rho^c\left(\left(-\frac{1}{\sqrt{f}}A_\rho\right)X,Y\right),$$
     i.e.\ from \eqref{Eq:Arho} we get $$S_\rho^c = -\frac{1}{\sqrt{f}}A_\rho = \frac{1}{2\rho\sqrt{f}}\mathrm{diag}\big(h_1(\rho)\mathbbm{1}_{2n-2}, h_2(\rho), h_3(\rho)\mathbbm{1}_2,\mathbbm{1}_{2n-2}\big).$$
     The eigenvalues are explicitly given by
     \begin{align*}
         \sigma_1 &= \frac{h_1(\rho)}{2\rho \sqrt{f}} = \frac{c}{2\rho(\rho+c)\frac{1}{2\rho}\sqrt{\frac{\rho+2c}{\rho+c}}} = \frac{c}{\rho+c}\sqrt{\frac{\rho+c}{\rho+2c}},\\
         \sigma_2 &= \frac{h_2(\rho)}{2\rho \sqrt{f}} = \frac{2\rho^2+5c\rho+4c^2}{(\rho+c)(\rho+2c)2\rho\frac{1}{2\rho}\sqrt{\frac{\rho+2c}{\rho+c}}} = \frac{2\rho^2+5c\rho+4c^2}{(\rho+2c)(\rho+c)}\sqrt{\frac{\rho+c}{\rho+2c}},\\
         \sigma_3 &= \frac{h_3(\rho)}{2\rho \sqrt{f}} = \frac{\rho+4c}{(\rho+2c)2\rho\frac{1}{2\rho}\sqrt{\frac{\rho+2c}{\rho+c}}} =\frac{\rho+4c}{\rho+2c}\sqrt{\frac{\rho+c}{\rho+2c}},\\
         \sigma_4 &= \frac{1}{2\rho\sqrt{f}} = \sqrt{\frac{\rho+c}{\rho+2c}}.
     \end{align*}
     The mean curvature \begin{align*}
        \Trace(S_\rho^c)&=(2n-2)\sigma_1+\sigma_2+2\sigma_3+(2n-2)\sigma_4\\
        &=\frac{(2n+2)\rho^2+(8n+7)c\rho+(8n+4)c^2}{(\rho+c)(\rho+2c)}\sqrt{\frac{\rho+c}{\rho+2c}}
    \end{align*} is obtained from a straightforward computation.
\end{proof}
 
 We deduce that the trace of the bilinear form $\pd{}{\rho}g_\rho^c = 2h_\rho$ is given by 
 $$\mathrm{tr}\left(\pd{}{\rho}g_\rho^c\right) = 2\mathrm{tr}(A_\rho) = -\frac{1}{\rho}\big((2n-2)h_1(\rho)+h_2(\rho)+2h_3(\rho)+2n-2\big).$$
 Moreover, it follows that $(\nabla\pd{}{\rho})^2$ is represented by 
 $$A_\rho^2 =\frac{1}{4\rho^2}\mathrm{diag}\left(h_1^2(\rho)\mathbbm{1}_{2n-2}, h_2^2(\rho), h_3^2(\rho)\mathbbm{1}_2, \mathbbm{1}_{2n-2}  
 \right).$$
 
 The Gram matrix of $h_\rho^2$, the bilinear form corresponding to the endomorphism $(\nabla\pd{}{\rho})^2$, is therefore
 $$H_\rho^2 = A_\rho^2g_\rho^c = \frac{1}{4\rho^2}\mathrm{diag}\left(h_1^2(\rho)\frac{\rho+c}{4\rho}\mathbbm{1}_{2n-2}, h_2^2(\rho)\frac{\rho +c}{4\rho^2(\rho+2c)}, h_3^2(\rho)\frac{\rho+2c}{2\rho^2}\mathbbm{1}_2, \frac{1}{2\rho}\mathbbm{1}_{2n-2}\right).$$
 It remains to compute the second derivative $\frac{\partial^2}{\partial\rho^2}g_\rho^c$, the Gram matrix of which is 
 \begin{align*}
   \frac{\partial^2}{\partial\rho^2}g_\rho^c &= \pd{}{\rho}\left(\pd{}{\rho}g_\rho^c\right) = 2\pd{}{\rho}H_\rho = 2\pd{}{\rho}\left(A_\rho g_\rho^c\right)\\
   &=2\left(\pd{A_\rho}{\rho}g_\rho^c + A_{\rho}\pd{g_\rho^c}{\rho}\right) =2\left(\pd{A_\rho}{\rho}g_\rho^c + 2A_{\rho}H_\rho\right) = 2\left(\pd{A_\rho}{\rho}+2A_\rho^2\right)g_\rho^c.
 \end{align*}
 All terms have already been computed, except $\pd{A_\rho}{\rho}$, which is
 \begin{align*}\pd{A_\rho}{\rho} &= \frac{1}{2\rho^2}\mathrm{diag}\big((h_1(\rho)-\rho h_1'(\rho))\mathbbm{1}_{2n-2}, h_2(\rho)-\rho h_2'(\rho),(h_3(\rho)-\rho h_3'(\rho))\mathbbm{1}_2,\mathbbm{1}_{2n-2}\big).
 \end{align*}
 We find
 \begin{align*}
   \frac{\partial^2}{\partial\rho^2}g_\rho^c &= \left(\pd{A_\rho}{\rho}+2A_\rho^2\right)2g_\rho^c \\
   &= \frac{1}{\rho^2}\mathrm{diag}\left((h_1^2+h_1-\rho h_1')\mathbbm{1}_{2n-2}, (h_2^2+h_2-\rho h_2'), (h_3^2+h_3-\rho h_3')\mathbbm{1}_2, 2\mathbbm{1}_{2n-2}\right)g_\rho^c.
 \end{align*}
 
 Since $\Ric(g_{\bar N}^c) = -2(n+2)g_{\bar N}^c$, we must take $\lambda = -2(n+2)$ in the formula of Lemma \ref{Lem:RicNrho}. All other quantities in that formula are now computed, so putting everything together, we find: 

\begin{proposition}\label{prop:Ricci_tensor}
  Let $n\in\mathbb N$ and $c\geq 0$. The Ricci tensor of $(\bar N_\rho,g_\rho^c)$ at a point $p_\rho=(\rho,0)$ is given by 
  $$\Ric_{\bar N_\rho} = -2(n+2)g_\rho^c -2n\rho\pd{}{\rho}g_\rho^c +\frac{1}{f}g_\rho^c\left(\pd{A_\rho}{\rho}\cdot,\cdot\right).$$
  
  If $n=1$, then in the global real coordinates $(\tilde\phi,\tilde\zeta_0,\zeta^0)$ the Ricci endomorphism is represented by the diagonal matrix
  $$\ric_{\bar N_\rho} = \mathrm{diag}\big(r_2,r_3\mathbbm{1}_2\big).$$
  
  If $n>1$, then in the global real coordinates $(b^a,t^a,\tilde\phi,\tilde\zeta_0,\zeta^0,\tilde\zeta_a,\zeta^a)$, $a=1,\dots,n-1$, the Ricci endomorphism is represented by the diagonal matrix
  $$\ric_{\bar N_\rho} = \mathrm{diag}\big(r_1\mathbbm{1}_{2n-2},r_2,r_3\mathbbm{1}_2,r_4\mathbbm{1}_{2n-2}\big).$$
  The principal Ricci curvatures are, for any $n\in\mathbb N$ and $c\geq 0$, given by
  \begin{align*}
    r_1 &=  \frac{-2(n+2)\rho^2-4(n+2)c\rho -6c^2}{(\rho+c)(\rho+2c)},\\
    r_2 &= \frac{2n\rho^4+(12n-8)c\rho^3+(28n-26)c^2\rho^2 +32(n -1)c^3\rho +16(n-1)c^4}{(\rho+c)(\rho+2c)^3},\\
    r_3 &=  2\frac{-\rho^3+(2n-3)c\rho^2+(8n-8)c^2\rho+(8n-8)c^3}{(\rho+2c)^3},\\
    r_4 &=  -\frac{2(\rho+3c)}{\rho+2c}.
  \end{align*}
\end{proposition}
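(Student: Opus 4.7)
The plan is to reduce the statement to a substitution into the general Ricci formula of Lemma \ref{Lem:RicNrho}, exploiting that in the chosen real coordinates at $p_\rho$, every relevant tensor is represented by a diagonal matrix. The crucial structural observation is that the formula $\frac{\partial^2}{\partial \rho^2} g_\rho^c = 2\left(\pd{A_\rho}{\rho} + 2A_\rho^2\right) g_\rho^c$ already computed in the preceding discussion means the two terms $-\frac{2}{f} h_\rho^2$ and $\frac{1}{2f}\frac{\partial^2}{\partial \rho^2}g_\rho^c$ in Lemma \ref{Lem:RicNrho} will collapse: using $h_\rho^2 = g_\rho^c(A_\rho^2 \cdot, \cdot)$, their sum is $\frac{1}{f} g_\rho^c\bigl(\pd{A_\rho}{\rho}\,\cdot,\cdot\bigr)$. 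So the first stated identity amounts to showing that the scalar coefficient of $\pd{}{\rho}g_\rho^c$ in Lemma \ref{Lem:RicNrho} simplifies, namely
$$\frac{1}{4f} \mathrm{tr}\!\left(\pd{}{\rho}g_\rho^c\right) - \frac{f'}{4f^2} = -2n\rho.$$

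To verify this identity, I would substitute $\mathrm{tr}(\pd{}{\rho}g_\rho^c) = 2\mathrm{tr}(A_\rho) = -\tfrac{1}{\rho}\bigl((2n-2)h_1 + h_2 + 2h_3 + (2n-2)\bigr)$ and compute $f'/f$ via logarithmic differentiation of $f = \frac{\rho+2c}{4\rho^2(\rho+c)}$, obtaining $\frac{f'}{f} = \frac{-2\rho^2-7c\rho-4c^2}{\rho(\rho+c)(\rho+2c)}$. Clearing denominators against $(\rho+c)(\rho+2c)$, the numerator of $\frac{1}{4f}\mathrm{tr}(\pd{}{\rho}g_\rho^c) - \frac{f'}{4f^2}$ should collapse to a multiple of $(\rho+2c)^2$, and the expected result $-2n\rho$ then emerges after the $(\rho+2c)^2$ cancels with the denominator. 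This is the main algebraic obstacle: the $n$-dependence must come out of the combination $4\rho^2+15c\rho+12c^2+(2n-2)(\rho+2c)^2 - (2\rho^2+7c\rho+4c^2) = (2n)(\rho+2c)^2$, and one should verify this cleanly.

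For the eigenvalues, I would view the identity as an endomorphism statement: since $\pd{}{\rho}g_\rho^c = 2 g_\rho^c A_\rho$ and $g_\rho^c(\pd{A_\rho}{\rho} \cdot, \cdot)$ has endomorphism $\pd{A_\rho}{\rho}$, the Ricci endomorphism reads
$$\ric_{\bar N_\rho} = -2(n+2)\Id - 4n\rho\, A_\rho + \frac{1}{f}\pd{A_\rho}{\rho}.$$
Both $A_\rho$ and $\pd{A_\rho}{\rho}$ are diagonal with four blocks corresponding to $h_1(\rho), h_2(\rho), h_3(\rho), 1$, so $\ric_{\bar N_\rho}$ is automatically diagonal with the claimed block sizes $2n-2, 1, 2, 2n-2$. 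Each eigenvalue has the uniform shape
$$r_i = -2(n+2) + 2n\, h_i(\rho) + 2\,\frac{\rho+c}{\rho+2c}\bigl(h_i(\rho) - \rho h_i'(\rho)\bigr),$$
with the convention $h_4 \equiv 1$.

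The remaining work is then purely computational: substitute $h_1 = \frac{c}{\rho+c}$, $h_2 = \frac{2\rho^2+5c\rho+4c^2}{(\rho+c)(\rho+2c)}$, $h_3 = \frac{\rho+4c}{\rho+2c}$, compute $h_i - \rho h_i'$, put everything over the common denominator $(\rho+c)(\rho+2c)^3$ (or the appropriate simplification thereof for each block), and expand. The case $r_4$ is immediate (it reduces to $-4 + 2\tfrac{\rho+c}{\rho+2c}$), the cases $r_1$ and $r_3$ require one page of bookkeeping each, and $r_2$ is the heaviest, since $h_2$ has the richest denominator and $\rho h_2'$ must be expanded carefully; this is where I expect the bulk of the effort. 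A sanity check is that setting $c=0$ must yield $r_1 = r_4 = -2$ and $r_2 = 2n$, $r_3 = -2$, matching the symmetric-space values and the trace identity $\sum_i (\text{mult}_i) r_i = \text{scal}(g_\rho^0)$, which one can use to cross-verify the computation block by block.
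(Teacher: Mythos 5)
Your proposal follows essentially the same route as the paper's proof: substitute into Lemma \ref{Lem:RicNrho}, collapse $-\tfrac{2}{f}h_\rho^2+\tfrac{1}{2f}\tfrac{\partial^2}{\partial\rho^2}g_\rho^c$ into $\tfrac{1}{f}g_\rho^c\big(\pd{A_\rho}{\rho}\cdot,\cdot\big)$ via the Gram-matrix identity, verify that the coefficient of $\pd{}{\rho}g_\rho^c$ equals $-2n\rho$ (your cancellation producing $2n(\rho+2c)^2$ is exactly the paper's computation $\mathrm{tr}(\pd{}{\rho}g_\rho^c)-\tfrac{f'}{f}=-8n\rho f$), and read off $r_i=-2(n+2)+2nh_i+\tfrac{2(\rho+c)}{\rho+2c}(h_i-\rho h_i')$ with $h_4\equiv 1$, which matches the paper's formula. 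The only slip is in your suggested $c=0$ sanity check: the correct limits are $r_1=-2(n+2)$ and $r_3=r_4=-2$ (not $r_1=r_4=-2$), consistent with your own general formula since $h_1\to 0$ and $h_1-\rho h_1'\to 0$ as $c\to 0$.
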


\begin{proof}
  The general formula of Lemma \ref{Lem:RicNrho} becomes with $\lambda= -2(n+2)$:
  \begin{align*}\Ric_{\bar N_\rho} &= -2(n+2)g_\rho^c + \left(\frac{1}{4f}\mathrm{tr}\left(\pd{}{\rho}g_\rho^c\right)- \frac{f'}{4f^2}\right)\pd{}{\rho}g_\rho^c -\frac{2}{f}h_\rho^2+\frac{1}{2f}\frac{\partial^2}{\partial\rho^2}g_\rho^c\\
  &= -2(n+2)g_\rho^c + \frac{1}{4f} \left(\mathrm{tr}\left(\pd{}{\rho}g_\rho^c\right)- \frac{f'}{f}\right)\pd{}{\rho}g_\rho^c -\frac{2}{f}h_\rho^2+\frac{1}{2f}\frac{\partial^2}{\partial\rho^2}g_\rho^c,
  \end{align*}
  where $f = \frac{\rho+2c}{4\rho^2(\rho+c)}$. We can therefore compute 
  \begin{align*}
  \mathrm{tr}\left(\pd{}{\rho}g_\rho^c\right)- \frac{f'}{f}&= -\frac{1}{\rho}\left((2n-2)h_1(\rho)+h_2(\rho)+2h_3(\rho)+2n-2\right) + \frac{4\rho^2(\rho+c)}{\rho+2c}\frac{2\rho^2+7c\rho+4c^2}{4\rho^3(\rho+c)^2}\\
  &=-\frac{1}{\rho}\left((2n-2)\frac{\rho+2c}{\rho+c}+\frac{2\rho^2+5c\rho+4c^2}{(\rho+c)(\rho+2c)}+2\frac{\rho+4c}{\rho+2c}\right) + \frac{2\rho^2+7c\rho+4c^2}{\rho(\rho+c)(\rho+2c)}\\
  &= -2n\frac{(\rho+2c)}{\rho(\rho+c)} = -8n\rho f.
  \end{align*}
  Thus, we get 
  $$\frac{1}{4f}\left(\mathrm{tr}\left(\pd{}{\rho}g_\rho^c\right)-\frac{f'}{f}\right) = \frac{1}{4f}(-8n\rho f)= -2n\rho$$
  and the formula for $\Ric_{\bar N_\rho}$ simplifies to 
  \begin{align*}\Ric_{\bar N_\rho} &= -2(n+2)g_\rho^c -2n\rho\pd{}{\rho}g_\rho^c -\frac{2}{f}h_\rho^2+\frac{1}{2f}\frac{\partial^2}{\partial\rho^2}g_\rho^c.
  \end{align*}
  
  We next simplify the last two terms. In terms of Gram matrices we have $$-2h^2_\rho+\frac{1}{2}\frac{\partial^2}{\partial \rho^2}g_\rho^c = -2A_\rho^2g_\rho^c+\frac{1}{2}\left(\pd{A_\rho}{\rho}+2A_\rho^2\right)2g_\rho^c=\pd{A_\rho}{\rho}g_\rho^c.$$
  
  Thus, we find 
  \begin{align*}\Ric_{\bar N_\rho} &= -2(n+2)g_\rho^c -2n\rho\pd{}{\rho}g_\rho^c +\frac{1}{f}g_\rho^c\left(\pd{A_\rho}{\rho}\cdot,\cdot\right).
  \end{align*}
  We therefore obtain for the Gram matrix of the bilinear form $\Ric_{\bar N_\rho}$, using $\frac{1}{2\rho^2f} = \frac{2(\rho+c)}{\rho+2c}$:
  \begin{align*}
    \Ric_{\bar N_\rho}&= -2(n+2)g_\rho^c -2n\rho 2H_\rho+\frac{1}{f}\pd{A_\rho}{\rho}g_\rho^c\\
    &= \left(-2(n+2)\mathbbm{1}_{4n-1} - 4n\rho A_\rho+\frac{1}{f}\pd{A_\rho}{\rho}\right)g_\rho^c\\
    &= \mathrm{diag}\big(r_1\mathbbm{1}_{2n-2},r_2,r_3\mathbbm{1}_2,r_4\mathbbm{1}_{2n-2}\big)g_\rho^c,
  \end{align*}
  with 
  \begin{align*}
  r_1 &= -2(n+2)+2nh_1- \frac{2(\rho+c)(\rho h_1'-h_1)}{\rho+2c}\\
  &= \frac{-2(n+2)\rho^2-4(n+2)c\rho -6c^2}{(\rho+c)(\rho+2c)},\\
  r_2 &= -2(n+2)+2nh_2- \frac{2(\rho+c)(\rho h_2'-h_2)}{\rho+2c}\\ 
  &=\frac{2n\rho^4+(12n-8)c\rho^3+(28n-26)c^2\rho^2 +32(n -1)c^3\rho +16(n-1)c^4}{(\rho+c)(\rho+2c)^3},\\
  r_3 &= -2(n+2)+2nh_3- \frac{2(\rho+c)(\rho h_3'-h_3)}{\rho+2c}\\
  &= 2\frac{-\rho^3+(2n-3)c\rho^2+(8n-8)c^2\rho+(8n-8)c^3}{(\rho+2c)^3},\\
  r_4 &= -2(n+2)+2n+ \frac{2(\rho+c)}{\rho+2c} = -\frac{2(\rho+3c)}{\rho+2c}.
  \end{align*}
\end{proof}

\begin{remark}
We comment on the nature of the principal Ricci curvatures computed in Proposition \ref{prop:Ricci_tensor}. 
\begin{enumerate}
    \item If $n=1$, then $g_\rho^c$ is a left-invariant metric on the three-dimensional Heisenberg group (see Section \ref{sec:Riemannian_solv}). For any $c\geq 0$, the Ricci endomorphism has just two eigenvalues, namely $$r_2 = \frac{2\rho^4 + 4c\rho^3+2c^2\rho^2}{(\rho+c)(\rho+2c)^3}=\frac{2\rho^2(\rho+c)}{(\rho+2c)^3}=-r_3.$$ Thus, $\ric_{\bar N_\rho}$ in this case is of the form $r_2\diag(1,-1,-1)$, and in fact the Ricci endomorphism of any invariant metric on the three-dimensional Heisenberg group  may be put in this form (see \cite{Mil76}).
    \item If $c=0$ and $n>1$, the principal Ricci curvatures simplify to 
    \begin{equation}
        r_1 = -2(n+2),\quad r_2 = 2n,\quad r_3=r_4 = -2.
    \end{equation}
    Note in particular that $r_3=r_4$ in this case and the spectrum of $\ric_{\bar N_\rho}$ consists only of three distinct eigenvalues. It follows that $\ric_{\bar N_\rho}$ restricts to a multiple of the identity on the subspace of $T_{p_\rho}\bar N_\rho$ spanned by $\pd{}{\zeta^0},\pd{}{\tilde\zeta_0},\dots,\pd{}{\zeta^{n-1}},\pd{}{\tilde\zeta_{n-1}}$. 
    \item In the case $n>1$, $c>0$ the eigenvalues $r_1,r_2,r_3,r_4$ are distinct, and, in contrast to the $c=0$ situation, the vectors $\pd{}{\zeta^0},\pd{}{\tilde\zeta_0}$ and $\pd{}{\zeta^1},\pd{}{\tilde\zeta_1},\dots,\pd{}{\zeta^{n-1}},\pd{}{\tilde\zeta_{n-1}}$ now span distinct eigenspaces of $\ric_{\bar N_\rho}$. 
\end{enumerate}
\end{remark}

From these remarks we deduce the following important consequence.

\begin{corollary}\label{cor:non-homothetic}
    Let $n>1$. The metrics $g_\rho=g_\rho^0$ and $g_\rho^c$, $c>0$, are not homothetic.
\end{corollary}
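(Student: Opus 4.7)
The plan is to exploit the fact that if two metrics are homothetic, i.e.\ if $\phi^*g_\rho^c = \mu\, g_\rho$ for some diffeomorphism $\phi$ and some constant $\mu>0$, then their Ricci endomorphisms are related by $\ric(g_\rho^c) = \mu^{-1}\phi^*\ric(g_\rho)$. In particular, the spectra of the two Ricci endomorphisms (at corresponding points) must coincide up to the common factor $\mu^{-1}$, and hence the \emph{number of distinct eigenvalues}, together with their multiplicities, is a homothety invariant.

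I would then invoke Proposition \ref{prop:Ricci_tensor} and the remark following it. For $n>1$ and $c=0$ the principal Ricci curvatures collapse to $r_1=-2(n+2)$, $r_2=2n$ and $r_3=r_4=-2$, so the Ricci endomorphism has exactly \emph{three} distinct eigenvalues with multiplicities $2n-2$, $1$ and $2n$. On the other hand, for $n>1$ and $c>0$ the four values $r_1,r_2,r_3,r_4$ given in the proposition are pairwise distinct, so $\ric_{\bar N_\rho}$ has exactly \emph{four} distinct eigenvalues (with multiplicities $2n-2,1,2,2n-2$). Any homothety would in particular preserve the multiplicity pattern of the spectrum (up to a uniform rescaling), so the presence of three versus four distinct eigenvalues rules out the existence of such a homothety.

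The main technical point to verify carefully is the pairwise distinctness of $r_1,r_2,r_3,r_4$ for $n>1$ and $c>0$. This is essentially a routine polynomial check: each of the expressions $r_i-r_j$, after clearing the common denominators $(\rho+c)(\rho+2c)^3$, becomes a polynomial in $\rho$ and $c$ whose coefficients one can verify to be strictly of one sign when $n>1$, $c>0$, $\rho>0$. For instance, $r_3-r_4$ is a positive rational multiple of $c(\rho+c)$, while $r_1-r_4$ and $r_2-r_3$ reduce similarly to expressions with manifestly non-vanishing numerators in the admissible range. Apart from this elementary sign analysis there is no substantial obstacle, since the structural observation about the spectrum has already been made in the remark preceding the corollary.
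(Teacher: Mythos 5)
Your proposal is correct and follows essentially the same route as the paper, which deduces the corollary directly from the remark after Proposition \ref{prop:Ricci_tensor}: for $n>1$ the Ricci endomorphism of $g_\rho^0$ has three distinct eigenvalues (multiplicities $2n-2,1,2n$) while that of $g_\rho^c$, $c>0$, has four pairwise distinct ones, and this spectral multiplicity pattern is invariant under homotheties. One cosmetic slip: $r_3-r_4=2c\bigl((2n+4)\rho^2+(8n+8)c\rho+(8n+4)c^2\bigr)/(\rho+2c)^3$, which is a positive multiple of $c$ but not literally of $c(\rho+c)$; this does not affect the distinctness check or the conclusion.
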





\section{The level sets \texorpdfstring{$(\bar{N}_\rho, g_\rho^c)$}{(N,g)} as Riemannian solvmanifolds}\label{sec:Riemannian_solv}

It was shown in \cite{CRT21} that the level sets $\bar{N}_\rho$ are the orbits for an isometric, $c$-dependent action of the simply connected Lie group with Lie algebra $\mathfrak{u}(1,n-1)\ltimes\mathfrak{heis}_{2n+1}$. The Lie algebra of the stabilizer of the point $p_\rho = (\rho,0)$ with respect to this action is a subgroup of $\mathfrak{u}(1,n-1)\ltimes\heis_{2n+1}$ isomorphic to $\mathfrak{u}(1)\times\mathfrak{u}(n-1)$. In this section we shall see that the subgroup $L$ with Lie algebra $\frl = \frb\ltimes \heis_{2n+1}$, where $\frb\subset\mathfrak{su}(1,n-1)$ is the Iwasawa subalgebra, acts simply transitively and isometrically on $\bar N_\rho$. We may thus regard $g_\rho^c$ as a left-invariant metric on the Lie group $L$, and in this section we shall determine the subalgebra $\frl$ and the inner product on $\frl$ corresponding to $g_\rho^c$ explicitly.

\subsection{The Iwasawa decomposition of \texorpdfstring{$\mathfrak{su}(1,n-1)$}{su(1,n-1)}}

Decomposing $\C^n = \C e_0\oplus \C^{n-1}$ with an orthonormal basis $e_0,e_1,\ldots,e_{n-1}$ of $\C^n$ with respect to the pseudo-Hermitian inner product $h$ of signature $(1,n-1)$ such that $h(e_0,e_0) =-1$, we may write $$\mathfrak{u}(1,n-1) = \R C \oplus \mathfrak{su}(1,n-1),$$ where
$$C = \begin{pmatrix}
i & 0\\
0 & i\mathbbm{1}_{n-1}
\end{pmatrix}\quad\text{and}\quad\mathfrak{su}(1,n-1) =\left\{\begin{pmatrix}
-\mathrm{tr}(A) & \bar v^\top \\ v & A \end{pmatrix} \, \Big| \, v\in\C^{n-1}, A\in\mathfrak{u}(n-1) \right\}.$$

We view $\mathfrak{u}(1,n-1)\subset\mathfrak{gl}(n,\C)$ as the fixed-point set of the anti-linear involutive Lie algebra automorphism $$\sigma\colon \mathfrak{gl}(n,\C)\to\mathfrak{gl}(n,\C),\quad \sigma(A):=A^\sigma:= -I\bar{A}^\top I,\quad\text{where }I = \begin{pmatrix}
-1 & 0\\
0 & \mathbbm{1}_{n-1}
\end{pmatrix}.$$

Note that $(AB)^\sigma = -B^\sigma A^\sigma$. Given $A\in\mathfrak{gl}(n,\C)$ we then write 
$$\Re(A) =\frac{1}{2}(A+A^\sigma)\quad\text{and}\quad\Im(A) =\frac{1}{2i}(A-A^\sigma).$$

For $a=1,\ldots,n-1$ we write further $$U_a = \begin{pmatrix}
0 & e_a^\top\\
0 & 0
\end{pmatrix}\quad\text{and}\quad U_a^\sigma:=\sigma(U_a)=\begin{pmatrix}
  0&0\\
  e_a&0
\end{pmatrix}.$$

We observe that $$[U_a,U_b]=0,\quad[U_a^\sigma,U_b^\sigma]=0,\quad[U_a,U_b^\sigma]=\begin{pmatrix}
  \delta_{ab}&0\\
  0&-e_be_a^\top
\end{pmatrix}.$$

Then $$\{C, \Re(U_a),\Im(U_a), \Re([U_a,U_b^\sigma]),\Im([U_a,U_b^\sigma])\mid a,b=1,\ldots,n-1\}$$ is a basis for $\mathfrak{u}(1,n-1)$.\medskip

The real vector space underlying the Lie algebra $\mathfrak{heis}_{2n+1}$ is given by $\C^n\oplus \R$, where we identify $\C^n$ with $\R^{2n}$ and write $Z$ for the generator of the center $\R$.\medskip

We now fix $\rho\in(0,\infty)$, take as basepoint $p_\rho := (\rho,0)\in \bar{N}_\rho$ and consider the infinitesimal action of the Lie algebra $\mathfrak{su}(1,n-1)\ltimes\mathfrak{heis}_{2n+1}$ on the hypersurface $\bar{N}_\rho$. The Lie algebra of the stabilizer $\mathfrak{g}'$ of $p_\rho$, i.e.\ the kernel of the map $\mathfrak{u}(1,n-1)\ltimes\mathfrak{heis}_{2n+1}\to T_{p_\rho}\bar{N}_\rho$ given by evaluating the Killing fields at $p_\rho$, was computed in \cite[Lemma 3.5]{CRT21}: $$\mathfrak{g}' = \mathrm{span}_\R\{C+2cZ, \Re([U_a,U_b^\sigma]),\Im([U_a,U_b^\sigma]) + 2c\delta_{ab}Z\mid a,b=1,\ldots,n-1\},$$
which is isomorphic to $\mathfrak{u}(1)\times\mathfrak{u}(n-1)$ and has trivial intersection with $\mathfrak{heis}_{2n+1}$.\medskip

We briefly review the Iwasawa decomposition of $\mathfrak{su}(1,n-1)$. We may choose the following Cartan decomposition 
$$\mathfrak{u}(1,n-1) = \mathfrak{k}\oplus \mathfrak{p},$$
where \begin{align*}
  \mathfrak{k} &= \R C \oplus \left\{\begin{pmatrix}
-\mathrm{tr}(A) & 0\\ 0 & A \end{pmatrix} \, \Big| \,  A\in\mathfrak{u}(n-1) \right\} \cong \mathfrak{u}(1)\oplus\mathfrak{u}(n-1),\\
\mathfrak{p} &= \left\{\begin{pmatrix}
0 & \bar v^\top \\ v & 0 \end{pmatrix} \, \Big| \, v\in\C^{n-1}\right\}.
\end{align*}

Define for $a\in\{1,\ldots,n-1\}$:
$$B_a := (1+\delta_{1a})U_a-[U_a,U_1^\sigma] = \begin{pmatrix}
    -\delta_{1a} & (1+\delta_{1a})e_a^\top\\
    0 & e_1e_a^\top
\end{pmatrix}$$
and \begin{align*}
		B_a^R&:=\Re(B_a)=\frac{1}{2}\begin{pmatrix}
			0&(1+\delta_{1a})e_a^\top\\
			(1+\delta_{1a})e_a&e_1e_a^\top-e_ae_1^\top
		\end{pmatrix},\\
		B_a^I&:=\Im(B_a)=\frac{1}{2i}\begin{pmatrix}
			-2\delta_{1a}&(1+\delta_{1a})e_a^\top\\
			-(1+\delta_{1a})e_a&e_1e_a^\top+e_ae_1^\top
		\end{pmatrix}.
    \end{align*}

Then $$B_1^R=\begin{pmatrix}
		0&e_1^\top\\
		e_1&0
	\end{pmatrix},\quad B_1^I=\begin{pmatrix}
		i&-i&0\\
		i&-i&0\\
		0&0&0
	\end{pmatrix}$$ and for $a>1$ we have $$B_a^R=\frac{1}{2}\begin{pmatrix}
		0&0&\tilde{e}_{a-1}^\top\\
		0&0&\tilde{e}_{a-1}^\top\\
		\tilde{e}_{a-1}&-\tilde{e}_{a-1}&0
	\end{pmatrix},\quad B_a^I=\frac{1}{2}\begin{pmatrix}
		0&0&-i\tilde{e}_{a-1}^\top\\
		0&0&-i\tilde{e}_{a-1}^\top\\
		i\tilde{e}_{a-1}&-i\tilde{e}_{a-1}&0
	\end{pmatrix},$$ where $\tilde{e}_{a-1}\in\C^{n-2}$ is such that $e_a^\top=\begin{pmatrix}
		0&\tilde{e}_{a-1}^\top
    \end{pmatrix}\in\C^{n-1}$.\medskip
    
A maximal abelian subalgebra of $\mathfrak{p}$ is given by 
$$\mathfrak{a} := \left\{\begin{pmatrix}
0 & ae_1^\top \\ ae_1 & 0 \end{pmatrix} \, \Big| \, a\in\R \right\} = \mathrm{span}_{\R}\{B_1^R\}.$$

The positive eigenvalues of $\ad(B_1^R) = [B_1^R,\cdot] \in\mathrm{End}(\mathfrak{u}(1,n-1))$ are 2 and 1 with eigenspaces given by 
\begin{align*}
\mathfrak{g}_{2} &= \left\{\begin{pmatrix}
ia & -iae_1^\top \\iae_1 & -iae_1e_1^\top \end{pmatrix} \, \Big| \, a\in\R \right\} = \mathrm{span}_{\R}\{B_1^I\},\\
\mathfrak{g}_{1} &= \left\{\begin{pmatrix}
0 & 0 & \bar z^\top \\
0 & 0 & \bar z^\top\\
z & -z & 0 \end{pmatrix} \, \Big| \, z\in\C^{n-2} \right\}= \mathrm{span}_{\R}\{B_a^R,B_a^I\mid a=2\ldots, n-1\}.
\end{align*}
With 
$$\mathfrak n = \mathfrak g_1\oplus\mathfrak g_2 = \left\{\begin{pmatrix}
ia & -ia & \bar z^\top \\
ia & -ia & \bar z^\top\\
z & - z & 0 \end{pmatrix} \, \Big| \, z\in\C^{n-2}, a\in\R \right\}$$
we get $\mathfrak{u}(1,n-1)=\mathfrak{k}\oplus\fra\oplus\frn$ and the Iwasawa decomposition 
$$\mathfrak{su}(1,n-1) = \mathfrak{u}(n-1)\oplus\mathfrak a \oplus\mathfrak{n}.$$
We observe the following bracket relations for $i=1,2$:
$$[\mathfrak{a},\mathfrak{a}] = 0,\quad [\mathfrak{a},\mathfrak{g}_i]\subset \mathfrak{g}_i,\quad [\mathfrak{g}_1,\mathfrak{g}_1]\subset \mathfrak{g}_2,\quad [\mathfrak{g}_1,\mathfrak{g}_2] = 0 = [\mathfrak{g}_2,\mathfrak{g}_2],$$
which imply
$$[\mathfrak{b},\mathfrak{b}]\subset\mathfrak{n},\quad [\mathfrak{n},\mathfrak{n}]\subset\mathfrak{g}_2, \quad [\mathfrak{n},[\mathfrak{n},\mathfrak{n}]]=0.$$ 

\begin{lemma}\label{Lem:frnheis}
    The Lie algebra $\frn$ is isomorphic to $\heis_{2n-3}$ and the basis vectors $B_1^I,B_a^R,B_a^I$, where $ a=2,\ldots,n-1$, satisfy the following non-trivial bracket relations (all other brackets are zero): 
    $$[B_a^R,B_a^I] = \frac{1}{2}B_1^I.$$
\end{lemma}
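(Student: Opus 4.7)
The plan is to compute the brackets of the basis vectors by direct matrix multiplication using the explicit expressions for $B_a^R$ and $B_a^I$ displayed above the lemma, and then to recognise the resulting structure as that of $\heis_{2n-3}$.

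The inclusions $[\mathfrak{g}_1,\mathfrak{g}_1]\subset\mathfrak{g}_2$ and $[\mathfrak{g}_1,\mathfrak{g}_2]=[\mathfrak{g}_2,\mathfrak{g}_2]=0$ already recorded in the text show that $\frn$ is two-step nilpotent, with $\mathfrak{g}_2=\R B_1^I$ central in $\frn$ and containing the derived subalgebra. Since $\dim\frn = 2(n-2)+1 = 2n-3$, it follows that $\frn\cong\heis_{2n-3}$ precisely when the induced alternating form $\mathfrak{g}_1\times\mathfrak{g}_1\to\R$ obtained from the bracket after identifying $\mathfrak{g}_2\cong\R$ via $B_1^I\leftrightarrow 1$ is non-degenerate. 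The lemma is therefore equivalent to the three families of identities $[B_a^R,B_b^R]=0$, $[B_a^I,B_b^I]=0$ and $[B_a^R,B_b^I] = \tfrac12\delta_{ab}B_1^I$ for $a,b\in\{2,\dots,n-1\}$.

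To verify them, write $v_a := \tilde{e}_{a-1}$ and exploit the common $(1,1,n-2)$-block sparsity pattern of $B_a^R$ and $B_a^I$ inherited from the splitting $\C^n = \C e_0\oplus\C e_1\oplus\C^{n-2}$. Each product $B_a^\star B_b^\star$ splits into an upper-left $2\times 2$ scalar block proportional to $\langle v_a,v_b\rangle = \delta_{ab}$, together with a lower-right $(n-2)\times(n-2)$ block that is a sum of two outer products $\pm v_av_b^\top$ cancelling automatically because the second block-column of $B_a^R$ and $B_a^I$ is the negative of the first. The upper-left blocks of $B_a^R B_b^R$ and $B_a^I B_b^I$ turn out to be symmetric under $a\leftrightarrow b$, so their commutators vanish; those of $B_a^R B_b^I$ and $B_b^I B_a^R$ are instead opposite, their difference producing precisely $\tfrac12\langle v_a,v_b\rangle B_1^I$. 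Non-degeneracy of the resulting skew form then exhibits the vectors $B_a^R, B_a^I$, $a=2,\dots,n-1$, as a symplectic basis (up to the uniform factor $\tfrac12$), whence $\frn\cong\heis_{2n-3}$.

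The only real difficulty is bookkeeping: each block carries signs and factors of $\pm i$ that have to be tracked carefully. However, the uniformity of the block pattern and the orthonormality relations $\langle v_a,v_b\rangle = \delta_{ab}$ render the calculation entirely mechanical, so no conceptual obstacle arises.
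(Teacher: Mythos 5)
Your proposal is correct and follows essentially the same route as the paper: both use the grading relations to see that $\frn$ is two-step nilpotent with center containing $\R B_1^I$, compute the brackets directly, and identify the induced form on $\frg_1$ as (one half of) the standard symplectic form, whence $\frn\cong\heis_{2n-3}$. The only difference is organizational: the paper brackets two general elements of $\frg_1$ parametrized by $z_1,z_2\in\C^{n-2}$, obtaining $2\Im(\bar z_1^\top z_2)B_1^I$, and then specializes $z=\tfrac12\tilde e_{a-1}$ and $z=\tfrac{i}{2}\tilde e_{a-1}$, rather than multiplying the basis matrices pairwise block by block as you propose.
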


\begin{proof}
    Since $[\frg_1,\frg_2] = [\frg_2,\frg_2] = \{0\}$ we see that $\frg_2$ is contained in the center of $\frn$. 
    Let $ z_1,z_2\in\C^{n-2}$. We can compute directly
    $$\left[\begin{pmatrix}
        0 & 0 & \bar z_1^\top\\
        0 & 0 & \bar z_1^\top\\
        z_1 & -z_1 & 0
    \end{pmatrix},\begin{pmatrix}
        0 & 0 & \bar z_2^\top\\
        0 & 0 & \bar z_2^\top\\
        z_2 & -z_2 & 0
    \end{pmatrix}\right] = \begin{pmatrix}
        2i\Im(\bar z_1^\top z_2) & -2i\Im(\bar z_1^\top z_2) & 0\\
        2i\Im(\bar z_1^\top z_2) & -2i\Im(\bar z_1^\top z_2) & 0\\
        0 & 0 & 0
    \end{pmatrix} = 2\Im(\bar z_1^\top z_2)B_1^I.$$
    It follows that $\frg_2 = \mathrm{span}_\R\{B_1^I\}$ is the center of $\frn$ and we note that $(z_1,z_2)\mapsto 2\Im(\bar z_1^\top z_2)$ is a nonzero multiple of the standard symplectic (K\"ahler) form on $\C^{n-2}$. Thus, $\frn\cong\heis_{2n-3}$.\medskip
    
    Since $B_a^R$ corresponds to choosing $z=\frac{1}{2}\tilde{e}_{a-1}$ and $B_a^I$ corresponds to choosing $z=\frac{i}{2}\tilde{e}_{a-1}$ if $a>1$ we find
    $$[B_a^R,B_b^R]= [B_a^I,B_b^I] = \frac{1}{2}\Im(\tilde{e}_{a-1}^\top\tilde{e}_{b-1})B_1^I = 0, \quad [B_a^R,B_b^I] = \frac{1}{2}\Im(i\tilde{e}_{a-1}^\top\tilde{e}_{b-1})B_1^I = \frac{1}{2}\delta_{ab}B_1^I.$$
\end{proof}

Consider the $(2n-2)$-dimensional real solvable subalgebra 
$$\mathfrak{b} = \mathfrak a\oplus\mathfrak n = \left\{\begin{pmatrix}
ia & b-ia & \bar z^\top \\
b+ia & -ia & \bar z^\top\\
z & - z & 0 \end{pmatrix} \, \Big| \, z\in\C^{n-2}, a,b\in\R \right\}\subset\mathfrak{su}(1,n-1).$$

A basis for $\mathfrak{b}$ is given by
$$\{B_a^R,B_a^I\mid a=1,\ldots,n-1\}.$$
We have by construction
$$\fra = \mathrm{span}_{\R}\{B_1^R\},\quad \frg_2 = \mathrm{span}_{\R}\{B_1^I\},\quad \frg_1 = \mathrm{span}_{\R}\{B_a^R, B_a^I\mid a=2,\ldots,n-1\}.$$

The subalgebra $\frn\cong\heis_{2n-3}$ is an ideal in $\frb$ and we have computed the brackets of the basis vectors of $\frn$ in Lemma \ref{Lem:frnheis}. The next lemma is then clear from the definition of $\frg_1$, $\frg_2$.

\begin{lemma}\label{Lem:bracketsfrb}
    We have 
    $$[B_1^R,B_1^I] = 2B_1^I,\quad [B_1^R,B_a^R] = B_a^R,\quad [B_1^R,B_a^I] = B_a^I$$
    for any $a\in\{2,\ldots,n-1\}$. 
\end{lemma}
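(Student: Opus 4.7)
The plan is short, because this statement is essentially a bookkeeping restatement of the construction of the restricted root space decomposition of $\mathfrak{su}(1,n-1)$ that was already carried out in the preceding paragraphs.

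My approach is the one already indicated by the authors: invoke the definitions. By construction, $\mathfrak{a} = \mathrm{span}_\R\{B_1^R\}$, and $\mathfrak{g}_2$, $\mathfrak{g}_1$ are the eigenspaces of $\mathrm{ad}(B_1^R)$ with eigenvalues $2$ and $1$ respectively. It was also shown that $\mathfrak{g}_2 = \mathrm{span}_\R\{B_1^I\}$ and $\mathfrak{g}_1 = \mathrm{span}_\R\{B_a^R,B_a^I \mid a=2,\ldots,n-1\}$. Hence $[B_1^R,B_1^I]=2B_1^I$ holds because $B_1^I$ is the distinguished basis vector of the weight-$2$ eigenspace, and $[B_1^R,B_a^R]=B_a^R$, $[B_1^R,B_a^I]=B_a^I$ hold because $B_a^R,B_a^I$ are basis vectors of the weight-$1$ eigenspace. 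No further argument is required.

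If one prefers a self-contained verification instead of citing the decomposition, I would substitute the explicit matrix expressions for $B_1^R$, $B_1^I$, $B_a^R$, $B_a^I$ displayed just before Lemma \ref{Lem:frnheis} and compute the three commutators directly. The computation for $[B_1^R,B_1^I]$ uses the $3\times 3$ block form and produces the factor $2$ from the combination $e_1 e_1^\top + e_1^\top e_1$ appearing in the cross terms; the computations for $[B_1^R,B_a^R]$ and $[B_1^R,B_a^I]$ are slightly more compact because $\tilde e_{a-1}\in\C^{n-2}$ pairs trivially with $e_1\in\C^{n-1}$ in the relevant blocks.

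The only ``obstacle'' is to track the factor of $2$ correctly in the first bracket; this is just a consequence of the fact that $B_1^I$ sits in the weight-$2$ space $\mathfrak{g}_2$ rather than in $\mathfrak{g}_1$, and is consistent with the standard root system picture for $\mathfrak{su}(1,n-1)$, where the long restricted root has multiplicity one and weight twice that of the short roots. So the entire proof can be written in two or three lines.
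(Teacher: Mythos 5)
Your proof is correct and matches the paper's own argument: the paper likewise states that the relations are immediate from the definitions of $\frg_1$ and $\frg_2$ as the eigenspaces of $\ad(B_1^R)$ with eigenvalues $1$ and $2$, together with $\frg_2=\mathrm{span}_\R\{B_1^I\}$ and $\frg_1=\mathrm{span}_\R\{B_a^R,B_a^I\mid a=2,\dots,n-1\}$. The optional direct matrix check you sketch is a fine (if unnecessary) supplement.
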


Note that $\mathfrak{b}\cap\mathfrak{g}'=\{0\}$. It follows that the action of the $(4n-1)$-dimensional real solvable Lie algebra
$$\mathfrak{l}:= \mathfrak{b}\ltimes\mathfrak{heis}_{2n+1}$$
on $\bar{N}_\rho$ is free. Here $\frb$ acts on $\heis_{2n+1}\cong \C^n\oplus\R$ by the standard representation of $\mathfrak{u}(1,n-1)$ on $\C^n$.\medskip

\subsection{The Lie algebra \texorpdfstring{$\frl$}{l}}

Now let $e_0,f_0,e_a,f_a$, $a=1,\ldots,n-1$, be the standard basis of $\R^{2n}$. We define the one-dimensional central extension $\heis_{2n+1}$ of $\R^{2n}$ by setting
\begin{equation*}\label{eq:commutators}
	[e_k,e_l]=0, \quad [f_k,f_l]=0,\quad 
	[e_k,f_l]=\bigg(\delta_{k0}\delta_{l0}-\sum_{a=1}^{n-1}\delta_{ka}\delta_{la}\bigg)Z
\end{equation*} 
for every $k,l=0,1,\ldots,n-1$, where $Z$ denotes the generator of the center. Complexifying and extending the Lie bracket complex-bilinearly, we obtain $\heis_{2n+1}^\C$.\medskip

Set $E_k\coloneqq e_k-if_k$.

\begin{lemma}\label{lemma:brackets_semidirect}
    The brackets between the elements of the basis $\{B_a^R,B_a^I\mid a=1,\ldots,n-1\}\subset\frb$ and of the  complex basis $\{E_k,\bar E_k, Z\mid k=0,\ldots,n-1\}$ are given as follows:
    \begin{align*}
    [B_1^R,E_k]&= \overline{[B_1^R,\bar E_k]} = -\delta_{k0}E_1 -\delta_{k1}E_0,\\
    [B_a^R,E_k]&= \overline{[B_a^R,\bar E_k]} = -\frac{1}{2}(\delta_{k0}+\delta_{k1})E_a-\frac{1}{2}\delta_{ka}(E_0-E_1),\\
    [B_1^I,E_k]&= \overline{[B_1^I,\bar E_k]} = -i(\delta_{k0}+\delta_{k1})(E_0-E_1),\\
    [B_a^I,E_k]&= \overline{[B_a^I,\bar E_k]} = \frac{i}{2}(\delta_{k0}+\delta_{k1})E_a -\frac{i}{2}\delta_{ka}(E_0-E_1).
    \end{align*}
\end{lemma}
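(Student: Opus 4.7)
The approach is a direct matrix computation, based on the observation that $\frl = \frb \ltimes \heis_{2n+1}$ is a semidirect product in which $\frb \subset \mathfrak{u}(1,n-1)$ acts on $\heis_{2n+1} \cong \C^n \oplus \R Z$ by the standard representation on $\C^n$ with the center $Z$ fixed. Hence $[A, Z] = 0$ for all $A \in \frb$, and each bracket $[A, E_k]$ is obtained by applying the matrix $A$ to the vector in $(\C^n)^{\C}$ representing $E_k$.

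The first step is to make the identification $\R^{2n} \cong \C^n$ explicit: one fixes a real-linear isomorphism sending $\{e_k, f_k\}$ to combinations of the standard complex basis $\{\epsilon_k\}$ and its imaginary multiples in a way compatible with the Heisenberg bracket $[e_k, f_l] = (\delta_{k0}\delta_{l0} - \sum_a \delta_{ka}\delta_{la})Z$ and with the coordinate conventions $w^0 = \tfrac{1}{2}(\tilde\zeta_0 + i\zeta^0)$, $w^a = \tfrac{1}{2}(\tilde\zeta_a - i\zeta^a)$ of Section \ref{sec:Riemannian_solv}. Under this identification and the resulting type decomposition $\heis_{2n+1}^{\C} = V^{1,0} \oplus V^{0,1} \oplus \C Z$, the sign flip between $w^0$ and $w^a$ makes $E_0$ and $E_a$ (for $a \geq 1$) sit in opposite pieces of the decomposition; this asymmetry is what accounts for the characteristic appearance of the combinations $(E_0 \pm E_1)$ on the right-hand sides.

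The second step is to apply the matrices $B_1^R, B_1^I, B_a^R, B_a^I$ (written out explicitly just before the lemma) to the basis $\{\epsilon_0, \ldots, \epsilon_{n-1}\}$: $B_1^R$ and $B_1^I$ act only on $\spn\{\epsilon_0, \epsilon_1\}$, while for $a \geq 2$ the matrices $B_a^R$ and $B_a^I$ mix $\epsilon_0, \epsilon_1$ with $\epsilon_a$ in the precise manner encoded by their block forms. Translating each result back to the basis $\{E_l\}$ using the identification from step one yields the four families of identities for $[B^\#, E_k]$. The companion formulas for $[B^\#, \bar E_k]$ follow at once from $[B^\#, \bar E_k] = \overline{[B^\#, E_k]}$, which holds because each $B^\# \in \frb$ is a real element.

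The main obstacle is not conceptual but purely bookkeeping: consistently tracking the sign conventions produced by the distinction between the timelike index $0$ and the spacelike indices $a \geq 1$, and ensuring that the translation between $\{e_k, f_k\}$ and $\{\epsilon_k, i\epsilon_k\}$ is applied uniformly when the outputs of the matrix products are rewritten in terms of $E_l$ and $\bar E_l$.
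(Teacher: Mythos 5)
Your plan (compute the action of the real elements $B_a^R,B_a^I$ on $\heis_{2n+1}^{\C}$ directly, then get the $\bar E_k$ formulas from reality of these elements) is a legitimate alternative in outline, and your final remark that $[B,\bar E_k]=\overline{[B,E_k]}$ for real $B\in\frb$ is correct. But there is a genuine gap at the very start: you never pin down what the semidirect-product bracket actually is, and the prescription you do state --- ``$[A,E_k]$ is obtained by applying the matrix $A$ to the vector representing $E_k$'' --- is not the one in force here. Because $\frl$ is realized through the anti-homomorphism $\alpha^{\C}$ (Killing fields), the bracket is $[A,v]=-A^{\top}v$ with the \emph{real} transpose, which for $A\in\mathfrak{u}(1,n-1)$ amounts to acting by $IAI$ rather than by $A$. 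The first test case already detects this: with the identification in which $E_k$ corresponds to the $k$-th standard basis vector of $\C^n$, your prescription gives $[B_1^R,E_0]=+E_1$, whereas the lemma (and the twisted action, since $IB_1^RI=-B_1^R$) gives $-E_1$; for $B_a^R$ with $a\geq 2$ the conjugation by $I$ is not just an overall sign, so the discrepancy is structural. Relatedly, your claim that the sign conventions place $E_0$ and $E_a$ in ``opposite pieces'' of the type decomposition is inconsistent with the statement you are proving: the formulas give, e.g., $[B_a^R,E_0]=-\tfrac12 E_a$, so all the $E_k$ must lie in one invariant piece of the complexified $\frb$-action; choosing the identification ``compatibly with the coordinate conventions'' after the fact is circular and does not resolve this.

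The paper avoids these ambiguities by taking as input the brackets $[U_a,E_k]=-\delta_{k0}E_a$, $[U_a,\bar E_k]=-\delta_{ka}\bar E_0$, $[U_a^{\sigma},E_k]=-\delta_{ka}E_0$, $[U_a^{\sigma},\bar E_k]=-\delta_{k0}\bar E_a$ from \cite[Proposition 3.4]{CRT21}, which encode exactly the correct action and identification; it then uses the Jacobi identity to handle the commutator term in $B_a=(1+\delta_{1a})U_a-[U_a,U_1^{\sigma}]$, and finally takes real and imaginary parts via $B_a^R=\tfrac12(B_a+B_a^{\sigma})$, $B_a^I=\tfrac1{2i}(B_a-B_a^{\sigma})$. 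To repair your argument you would either have to import those formulas (at which point you are essentially reproducing the paper's computation), or derive the $-A^{\top}$ prescription and the placement of the $E_k$ in the type decomposition from the explicit Killing fields of Section \ref{sec:Riemannian_solv}; the step you defer as ``bookkeeping'' is precisely where the content of the proof lies.
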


\begin{proof}
    By definition of the semidirect product structure, the bracket of $\mathfrak{gl}(n,\C)\ltimes\heis_{2n+1}^\C$ evaluated on $A\in\mathfrak{u}(1,n-1)$ and $v\in\R^{2n}$ is just $[A,v] = -A^\top v$ (where $A^\top $ is identified with a real $2n\times2n$-matrix), while we have $[A,Z] = 0$. Using this prescription, the following brackets were computed in \cite[Proposition 3.4]{CRT21}: $$\begin{aligned}
			[U_a,E_k]&=-\delta_{k0}E_a,\\
			[U_a,\bar{E}_k]&=-\delta_{ka}\bar{E}_0,
		\end{aligned}\qquad\begin{aligned}
			[U_a^\sigma,E_k]&=-\delta_{ka}E_0,\\
			[U_a^\sigma,\bar{E}_k]&=-\delta_{k0}\bar{E}_a.
		\end{aligned}$$
        
    From these identities, we may deduce the brackets between elements of $\frb$ and $\heis_{2n+1}$ using the Jacobi identity: $$\begin{aligned}
			[B_1,E_k]&=-(2\delta_{k0}+\delta_{k1})E_1+\delta_{k0}E_0,\\
			[B_1,\bar E_k]&=-(2\delta_{k1}+\delta_{k0})\bar E_0+\delta_{k1}\bar E_1,\\
			[B_a,E_k]&=-(\delta_{k0}+\delta_{k1})E_a,\\
			[B_a,\bar E_k]&=-\delta_{ka}(\bar E_0-\bar E_1),
		\end{aligned}\qquad\begin{aligned}
			[B_1^\sigma,E_k]&=-(2\delta_{k1}+\delta_{k0})E_0+\delta_{k1}E_1,\\
			[B_1^\sigma,\bar E_k]&=-(2\delta_{k0}+\delta_{k1})\bar{E}_1+\delta_{k0}\bar{E}_0,\\
			[B_a^\sigma,E_k]&=-\delta_{ka}(E_0-E_1),\\
			[B_a^\sigma,\bar E_k]&=-(\delta_{k0}+\delta_{k1})\bar{E}_a.
		\end{aligned}$$
        
    Using these relations and that $A^R=\Re(A)=\frac{1}{2}(A+A^\sigma)$ and $A^I=\Im(A)=\frac{1}{2i}(A-A^\sigma)$ for any $A\in\mathfrak{gl}(n,\C)$ we get the claimed result.
\end{proof}

For $n\in\N$, we work in the ordered basis 
\begin{equation}\label{Eq:DefcalB}
    \calB_n:=\begin{cases} \left(e_0,f_0,Z\right) & n=1\\ 
    \left(B_1^R,B_1^I,\ldots, B_{n-1}^R,B_{n-1}^I,e_0,f_0,e_1,f_1,\ldots,e_{n-1},f_{n-1},Z\right) & n>1
    \end{cases}
\end{equation}
  
\begin{proposition}\label{prop:non-unimodular}
    For $n>1$, the Lie algebra $\frl$ is completely solvable and non-unimodular.
\end{proposition}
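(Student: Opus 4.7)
The plan is to exploit the ideal filtration $\{0\}\subset \R Z\subset \heis_{2n+1}\subset \frl$. Since $\heis_{2n+1}$ is a nilpotent ideal and $\frl/\heis_{2n+1}\cong \frb$ is the Iwasawa subalgebra of $\mathfrak{su}(1,n-1)$, the quotient is completely solvable and $\frl$ is solvable. Moreover, $\R Z$ is an ideal of $\frl$: it is central in $\heis_{2n+1}$, and $\frb$ acts trivially on it because $\frb$ acts on $\heis_{2n+1}$ via the standard representation on $\C^n=\heis_{2n+1}/\R Z$.

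To establish complete solvability, I would show that $\ad(X)$ has only real eigenvalues for every $X=X_b+X_h\in \frl$ with $X_b\in\frb$ and $X_h\in\heis_{2n+1}$. Since $\frb$ is a subalgebra and $\heis_{2n+1}$ is an ideal, in the basis $\calB_n$ of \eqref{Eq:DefcalB} the matrix of $\ad(X)$ is block lower triangular with diagonal blocks $A=\ad(X_b)|_\frb$ and $B=\ad(X)|_{\heis_{2n+1}}$; hence the spectrum of $\ad(X)$ is the union of the spectra of $A$ and $B$. The block $A$ has real spectrum because $\frb$ is completely solvable. For $B$, the further filtration $\R Z\subset\heis_{2n+1}$ gives a block triangular refinement: $B$ acts trivially on $\R Z$, and on $\heis_{2n+1}/\R Z\cong \C^n$ the contribution of $\ad(X_h)$ vanishes (since $[\heis_{2n+1},\heis_{2n+1}]=\R Z$), leaving the action of $X_b$ in the standard representation of $\mathfrak{su}(1,n-1)$ restricted to $\frb$. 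The $\fra$-component of $X_b$ is $\R$-semisimple in this representation and the $\frn$-component acts nilpotently, so $X_b$ has real eigenvalues; alternatively, this can be verified directly from the explicit formulas in Lemma \ref{lemma:brackets_semidirect}.

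For non-unimodularity, I would compute $\Trace(\ad(B_1^R))$. By Lemmas \ref{Lem:frnheis} and \ref{Lem:bracketsfrb}, $\ad(B_1^R)$ is diagonal on $\frb$ in the basis $(B_1^R, B_1^I, B_2^R, B_2^I,\dots)$ with eigenvalues $0$, $2$, and $1$ (the last with multiplicity $2(n-2)$), contributing a trace of $2+2(n-2)=2n-2$. On $\heis_{2n+1}$, Lemma \ref{lemma:brackets_semidirect} gives $\ad(B_1^R)E_0=-E_1$, $\ad(B_1^R)E_1=-E_0$, and $\ad(B_1^R)$ annihilates $Z$ and the remaining $E_a$; in real coordinates this yields two trace-zero $2\times 2$ blocks on $\{e_0,e_1\}$ and $\{f_0,f_1\}$ and zero elsewhere, so the trace on $\heis_{2n+1}$ vanishes. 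Therefore $\Trace(\ad(B_1^R))=2n-2\neq 0$ for $n>1$, establishing non-unimodularity. The only subtle point is the real-spectrum claim for the standard representation of $\frb$ on $\C^n$, which is routine from general Lie-theoretic structure theory but can also be verified by hand in this concrete setup; the rest is direct bookkeeping from the explicit brackets.
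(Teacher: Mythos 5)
Your proof is correct in substance, and for non-unimodularity it is the same computation as the paper's: the paper writes out the full matrix of $\ad(B_1^R)$ in the ordered basis $\calB_n$ of \eqref{Eq:DefcalB}, reads off $\Trace(\ad(B_1^R))=2+2(n-2)=2n-2\neq 0$ (the block on $\heis_{2n+1}$, your swap $E_0\leftrightarrow E_1$ in real coordinates, is traceless), and for complete solvability simply observes that the eigenvalues of $\ad(B_1^R)$ are $2,1,0,-1$ while every other basis vector of $\calB_n$ is $\ad$-nilpotent. Where you genuinely differ is in the complete-solvability argument: instead of one explicit $(4n-1)\times(4n-1)$ matrix, you use the chain of ideals $\R Z\subset\heis_{2n+1}\subset\frl$ and block-triangularity to reduce the claim to (a) complete solvability of $\frb$ and (b) realness of the spectrum of the $\frb$-action on $\C^n\cong\heis_{2n+1}/\R Z$; this is more structural and would generalize beyond this example, at the cost of needing care in step (b). There the phrase ``the $\fra$-component is $\R$-semisimple and the $\frn$-component acts nilpotently, so $X_b$ has real eigenvalues'' is not a valid deduction for arbitrary operators: a real-diagonalizable matrix plus a non-commuting nilpotent one can have non-real spectrum. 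The repair is exactly the ``general Lie-theoretic structure theory'' you allude to: by Lie's theorem the complexified representation of the solvable algebra $\frb$ is simultaneously triangulable, its eigenvalue functions are weights, i.e.\ linear functionals vanishing on $[\frb,\frb]=\frn$, so the spectrum of any $X_b$ equals that of its $\fra$-component $aB_1^R$, which is $\{0,\pm a\}$ by Lemma \ref{lemma:brackets_semidirect}; the same weight argument (applied to $\ad$) is also what silently justifies both your and the paper's reduction of ``real eigenvalues for all $X\in\frl$'' to a check on a basis, since the weights of a solvable algebra vanish on $[\frl,\frl]$. With that one step made precise, your proof is complete and the bookkeeping matches the paper's.
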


\begin{proof}
    The adjoint operator of $B_1^R$ with respect to the basis $\mathcal B_n$ defined in \eqref{Eq:DefcalB} is given by $$\ad(B_1^R)=\mathrm{diag}\big(0,2,\mathbbm{1}_{2n-4},\mathcal{V}_4,\mathbb{O}_{2n-4},0\big),$$ where $$\mathcal{V}_4:=\begin{pmatrix}
    0&0&-1&0\\
    0&0&0&-1\\
    -1&0&0&0\\
    0&-1&0&0
  \end{pmatrix}.$$

  Then $\mathrm{tr}(\ad(B_1^R))=2n-2\neq0$, which implies that $\frl$ is non-unimodular. Moreover, the eigenvalues of $\ad(B_1^R)$ are $2,1,0,-1$ and for any $X\in\calB_n\setminus\{B_1^R\}$ the operator $\ad(X)$ is nilpotent, so its only eigenvalue is zero. Hence $\frl$ is completely solvable.
\end{proof}

\begin{remark}\label{remark:n=1_nilpotent}
    Note that the case $n=1$ corresponds to $\frl=\heis_3$, so $\frl$ is nilpotent, hence unimodular and completely solvable.
\end{remark}

\subsection{The metric \texorpdfstring{$g_\rho^c$}{g} as a left-invariant metric on \texorpdfstring{$L$}{L}}

Our goal here is to write the induced metric $g_\rho^c$ as a left-invariant metric on the Lie group $L$, and for this it is enough to compute the induced inner product on $\mathfrak{l}$, under the identification $T_eL\cong T_{p_\rho}\bar{N}_\rho$ given by the infinitesimal action of $\mathfrak{u}(1,n-1)\ltimes\mathfrak{heis}_{2n+1}$ on $\bar{N}_\rho$. The complexification of the infinitesimal action is given by the anti-homomorphism $\alpha^\C\colon\mathfrak{gl}(n,\C)\ltimes\mathfrak{heis}_{2n+1}^{\C}\to\Gamma(T\bar N)^{\C}$, where the vector fields generating such action are given by (see \cite[Proposition 3.1]{CRT21}): \begin{align*}
		Y_a&\coloneqq\alpha^\C(U_a)= \pd{}{\bar X^a}-X^a\sum_{b=1}^{n-1}X^b \pd{}{X^b}
		-w^0\pd{}{w^a}-\bar w^a\pd{}{\bar w^0} + ic X^a \pd{}{\tilde\phi},\\
		\bar Y_a&=\alpha^\C(U_a^\sigma),\\
        [Y_a,\bar Y_b]&=-\alpha^\C([U_a,U_b^\sigma])\\
        &=\delta_{ab}\Bigg(\!\sum_j \!\bigg( \!X^j\pd{}{X^j} \!- \! \bar X^j\pd{}{\bar X^j} \! \bigg) \! + \! w^0\pd{}{w^0} \! - \! \bar w^0\pd{}{\bar w^0} \! - \! 2ic\pd{}{\tilde\phi}\Bigg)\\
        &\quad+X^a\pd{}{X^b} - \bar X^b\pd{}{\bar X^a} + \bar w^a \pd{}{\bar w^b} - w^b\pd{}{w^a}.
\end{align*}

The action of the complexified Heisenberg Lie algebra $\mathfrak{heis}_{2n+1}^\C$ is then generated by the following vector fields (see \cite[Proposition 3.3]{CRT21}):
$$\pd{}{\tilde\phi} = \alpha^\C(Z),\quad V_a = \alpha^\C(E_a)= \frac{1}{\sqrt{2}}\left(\pd{}{w^a} - 2i\bar w^a\pd{}{\tilde\phi}\right), \quad V_0 = \alpha^\C(E_0)= \frac{1}{\sqrt 2}\left(\pd{}{w^0} +2i\bar w^0 \pd{}{\tilde\phi}\right),$$ where $a=1,\ldots,n-1$.\medskip

Define the map $$\alpha_\rho^\C\colon \mathfrak{l}^\C\to T_{p_\rho}\bar N_\rho^\C, \quad l\mapsto \alpha^\C(l)|_{p_\rho},$$
i.e.\ evaluation of the corresponding complex Killing field at $p_\rho$. Then the formulas above allow us to explicitly evaluate $\alpha_\rho^\C$ on the basis vectors of $\mathfrak{l}$:
\begin{align*}
\alpha_\rho^\C(B_a) &= \alpha_\rho^\C((1+\delta_{1a})U_a-[U_a,U_1^\sigma]) = (1+\delta_{1a})Y_a|_{p_\rho} + [Y_a,\bar Y_1]|_{p_\rho} = (1+\delta_{1a})\frac{\partial}{\partial \bar X^a} -2ic\delta_{1a}\frac{\partial}{\partial\tilde\phi},\\
\alpha_\rho^\C(E_k) &= \alpha_\rho^\C(e_k-if_k) = V_k|_{p_\rho} = \frac{1}{\sqrt{2}}\frac{\partial}{\partial w^k},\\
\alpha_\rho^\C(Z) &= \frac{\partial}{\partial\tilde\phi}.
\end{align*}

Let us consider again the real coordinates $X^a=\frac{1}{2}(b^a+it^a)$, $w^0=\frac{1}{2}(\tilde\zeta_0+i\zeta^0)$ and $w^a=\frac{1}{2}(\tilde\zeta_a-i\zeta^a)$. Then we find for $a=2,\dots,n-1$, $j=1,\dots,n-1$:
\begin{itemize}
    \item $\alpha_{\rho}(B_1^R)=\Re\big((2Y_1+[Y_1,\bar Y_1])|_{p_\rho}\big)=2\pd{}{b^1}$,
    \item $\alpha_{\rho}(B_1^I)=\Im\big((2Y_1+[Y_1,\bar Y_1])|_{p_\rho}\big)=2\pd{}{t^1}-2c\pd{}{\tilde\phi}$,
    \item $\alpha_{\rho}(B_a^R)=\Re\big((Y_a+[Y_a,\bar Y_1])|_{p_\rho}\big)=\pd{}{b^a}$,
    \item $\alpha_{\rho}(B_a^I)=\Im\big((Y_a+[Y_a,\bar Y_1])|_{p_\rho}\big)=\pd{}{t^a}$,
    \item $\alpha_{\rho}(e_0)=\Re\big(V_0|_{p_\rho}\big)=\frac{1}{\sqrt{2}}\pd{}{\tilde\zeta_0}$,
    \item $\alpha_{\rho}(f_0)=-\Im\big(V_0|_{p_\rho}\big)=\frac{1}{\sqrt{2}}\pd{}{\zeta^0}$,
    \item $\alpha_{\rho}(e_j)=\Re\big(V_j|_{p_\rho}\big)=\frac{1}{\sqrt{2}}\pd{}{\tilde\zeta_j}$,
    \item $\alpha_{\rho}(f_j)=-\Im\big(V_j|_{p_\rho}\big)=-\frac{1}{\sqrt{2}}\pd{}{\zeta^j}$,
    \item $\alpha_{\rho}(Z)=\pd{}{\tilde{\phi}}$,
\end{itemize}
where the vector fields on the right-hand side are evaluated at the point $p_\rho$.\medskip

At the point $p_\rho$, the metric expressed in these real coordinates is the following: \begin{align*}
 g_\rho^c&=\frac{1}{4\rho^2}\frac{\rho+c}{\rho+2c}\d\tilde{\phi}^2+\frac{\rho+c}{4\rho}\sum_{a=1}^{n-1}\big((\d b^a)^2+(\d t^a)^2\big)\\
 &\quad+\frac{\rho+2c}{2\rho^2}\big((\d\tilde{\zeta}_0)^2+(\d\zeta^0)^2\big)+\frac{1}{2\rho}\sum_{a=1}^{n-1}\big((\d\tilde{\zeta}_a)^2+(\d\zeta^a)^2\big).
\end{align*}

Let us denote by $E_{i,j}$ the matrix with $1$ in the $(i,j)$-position and zero elsewhere.

\begin{proposition}\label{Prop:gcrho_frl}
    Let $n\in\N$ and consider the basis $\calB_n$ of $\frl$ defined in \eqref{Eq:DefcalB}. Then: \begin{enumerate}
        \item If $n>1$, the Gram matrix of the inner product corresponding to $g_\rho^c$ is given by $$g_\rho^c=\mathrm{diag}\big(\tfrac{\rho+c}{\rho},\tfrac{(\rho+c)^3}{\rho^2(\rho+2c)},\tfrac{\rho+c}{4\rho}\mathbbm{1}_{2n-4},\mathcal{G}_4,\tfrac{1}{4\rho}\mathbbm{1}_{2n-4},\tfrac{1}{4\rho^2}\tfrac{\rho+c}{\rho+2c}\big)-\tfrac{c}{2\rho^2}\tfrac{\rho+c}{\rho+2c}\big(E_{2,4n-1}+E_{4n-1,2}\big),$$ where $$\mathcal{G}_4:=\begin{pmatrix}
            \tfrac{\rho+2c}{4\rho^2}\mathbbm{1}_2&0\\
            0&\tfrac{1}{4\rho}\mathbbm{1}_2
        \end{pmatrix}.$$
        \item If $n>1$, the Ricci endomorphism $\ric_\rho^c$ is in the above basis represented by the matrix: 
        $$\ric_\rho^c= \mathrm{diag}\left(r_1\mathbbm{1}_{2n-2}, r_3\mathbbm{1}_2, r_4\mathbbm{1}_{2n-2},r_2\right) + 2c(r_1-r_2)E_{4n-1,2}.$$
        \item If $n=1$, the Gram matrix of $g_\rho^c$ and the matrix of the Ricci endomorphism are
          $$g_\rho^c=\left(\begin{array}{rrr}
                \frac{2 \, c + \rho}{4 \, \rho^{2}} & 0 & 0 \\
                0 & \frac{2 \, c + \rho}{4 \, \rho^{2}} & 0 \\
                0 & 0 & \frac{c + \rho}{4 \, {\left(2 \, c + \rho\right)} \rho^{2}}
        \end{array}\right)\quad\text{and}\quad\ric_\rho^c=\frac{2\rho^2 \, {\left(c + \rho\right)}}{(2\, c+\rho)^3}\left(\begin{array}{rrr}
        -1 & 0 & 0 \\
        0 & -1 & 0 \\
        0 & 0 & 1
        \end{array}\right).$$
    \end{enumerate}
\end{proposition}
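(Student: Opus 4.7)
The plan is to pull back the already-computed coordinate expressions for the metric and its Ricci endomorphism along the linear isomorphism $\alpha_\rho\colon\frl\to T_{p_\rho}\bar N_\rho$, whose values on the basis $\calB_n$ are listed explicitly just before the statement. The key observation is that, for $a\geq 2$, each of the vectors $B_a^R,B_a^I,e_0,f_0,e_j,f_j,Z$ is a (real) scalar multiple of a single coordinate partial derivative, while $B_1^R=2\pd{}{b^1}$ and $B_1^I=2\pd{}{t^1}-2c\pd{}{\tilde\phi}$. Thus the only source of off-diagonal terms in either matrix is the $\pd{}{\tilde\phi}$-piece of $B_1^I$, which couples $B_1^I$ to $Z=\pd{}{\tilde\phi}$.

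For part (1), I would substitute the images $\alpha_\rho(\cdot)$ into the diagonal coordinate form of $g_\rho^c$ at $p_\rho$ recalled in Section 3, reading off the diagonal entries $g(B_1^R,B_1^R)=\frac{\rho+c}{\rho}$, $g(B_a^R,B_a^R)=g(B_a^I,B_a^I)=\frac{\rho+c}{4\rho}$ for $a\geq2$, $g(e_0,e_0)=g(f_0,f_0)=\frac{\rho+2c}{4\rho^2}$, $g(e_j,e_j)=g(f_j,f_j)=\frac{1}{4\rho}$, $g(Z,Z)=\frac{\rho+c}{4\rho^2(\rho+2c)}$, and then doing the one genuine calculation
\[g(B_1^I,B_1^I)=4\cdot\tfrac{\rho+c}{4\rho}+4c^2\cdot\tfrac{\rho+c}{4\rho^2(\rho+2c)}=\tfrac{(\rho+c)\bigl((\rho+2c)\rho+c^2\bigr)}{\rho^2(\rho+2c)}=\tfrac{(\rho+c)^3}{\rho^2(\rho+2c)},\]
together with the single off-diagonal entry $g(B_1^I,Z)=-2c\cdot\tfrac{\rho+c}{4\rho^2(\rho+2c)}=-\tfrac{c(\rho+c)}{2\rho^2(\rho+2c)}$, which yields the claimed $E_{2,4n-1}+E_{4n-1,2}$ correction.

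For part (2), I would apply the diagonal coordinate form of $\ric_{\bar N_\rho}$ given by Proposition \ref{prop:Ricci_tensor} to each $\alpha_\rho$-image. For every basis vector except $B_1^I$ the result is immediately an eigenvector with eigenvalue $r_1$, $r_3$, $r_4$, or $r_2$ respectively, giving the diagonal block $\diag(r_1\mathbbm{1}_{2n-2},r_3\mathbbm{1}_2,r_4\mathbbm{1}_{2n-2},r_2)$. For $B_1^I$ we have
\[\ric(B_1^I)=2r_1\pd{}{t^1}-2cr_2\pd{}{\tilde\phi}=r_1\!\left(2\pd{}{t^1}-2c\pd{}{\tilde\phi}\right)+2c(r_1-r_2)\pd{}{\tilde\phi}=r_1\,B_1^I+2c(r_1-r_2)\,Z,\]
which produces exactly the correction $2c(r_1-r_2)E_{4n-1,2}$ in the matrix of $\ric_\rho^c$.

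Part (3) requires no new work: when $n=1$ the basis $\calB_1=(e_0,f_0,Z)$ consists of scalar multiples of three distinct coordinate partials, so the Gram matrix and Ricci endomorphism are simply the diagonal matrices obtained by rescaling the diagonal entries of Proposition \ref{prop:Ricci_tensor} (using $r_2=-r_3=\frac{2\rho^2(\rho+c)}{(\rho+2c)^3}$). There is no real obstacle in this proof; the only thing to be careful about is bookkeeping the position of the lone off-diagonal entry $E_{4n-1,2}$ relative to the ordering fixed in \eqref{Eq:DefcalB}, which is fully determined by the fact that $B_1^I$ is the second vector and $Z$ is the last.
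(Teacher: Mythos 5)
Your proposal is correct and is essentially the paper's own argument: the paper's proof simply says to plug the explicit $\alpha_\rho$-images of the basis $\calB_n$ into the coordinate expression of $g_\rho^c$ at $p_\rho$ and to evaluate the coordinate-diagonal Ricci endomorphism of Proposition \ref{prop:Ricci_tensor} on that basis, which is exactly what you do. Your explicit computations of $g(B_1^I,B_1^I)=\tfrac{(\rho+c)^3}{\rho^2(\rho+2c)}$, the lone off-diagonal entry $g(B_1^I,Z)$, and $\ric(B_1^I)=r_1B_1^I+2c(r_1-r_2)Z$ correctly supply the details the paper leaves to the reader.
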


\begin{proof}
   The expressions for $g_\rho^c$ follow directly by plugging the explicit tangent vectors into $g_\rho^c$.\medskip
   
   We have computed the Ricci endomorphism with respect to the basis of $T_{p_\rho}\bar N_\rho$ given by coordinate vector fields in Proposition \ref{prop:Ricci_tensor}. It is then straightforward to evaluate $\ric_\rho^c$ on the basis $\calB_n$.
\end{proof}

In general, the Ricci endomorphism of $(\frl,g_\rho^c)$ is given by (see e.g.\ \cite[Equation 21]{Lau11}): $$\ric_\rho^c=R-\tfrac{1}{2}B-\ad(H)^s,$$ where $$\ad(H)^s=\tfrac{1}{2}(\ad(H)+\ad(H)^*)$$
is the symmetric part of $\ad(H)$ and $H\in\fra$ is the \emph{mean curvature vector}, characterized by $$g_\rho^c(H,A)=\Trace(\ad(A))$$ for all $A\in\fra$. The term $B\in\mathrm{End}(\frl)$ denotes the symmetric endomorphism defined by the Killing form of $\frl$ relative to $g_\rho^c$, that is $$g_\rho^c(BX,X)=\Trace(\ad(X)\circ\ad(X))$$ for all $X\in\frl$. The symmetric endomorphism $R$ is defined by $$g_\rho^c(RX,X)=-\tfrac{1}{2}\sum g_\rho^c([X,L_i],L_j)^2+\tfrac{1}{4}\sum g_\rho^c([L_i,L_j],X)^2,$$ where $\{L_i\}$ is an orthonormal basis of $(\frl,g_\rho^c)$. In our situation we already know $\ric^c_\rho$, so we can compute $R = \ric^c_\rho +\frac{1}{2}B+\ad(H)^s$ if we are able to determine $B$ and $H$.

\begin{lemma}\label{lemma:curvature_term_R}
    With respect to our explicit choice of basis $\mathcal B_n$ for $\frl$ we have: 
    \begin{enumerate}
        \item For $n>1$, the mean curvature vector is $$H=\frac{\Trace(\ad(B_1^R))}{g_\rho^c(B_1^R,B_1^R)}B_1^R=(2n-2)\frac{\rho}{\rho+c}B_1^R$$ and we find 
        \begin{align*}\ad(H)^s &= (2n-2)\mathrm{diag}\big(0,\tfrac{2\rho^2+4c\rho+c^2}{(\rho+c)(\rho+2c)},\tfrac{\rho}{\rho+c}\mathbbm{1}_{2n-4},\mathcal{S}_4,\mathbb{O}_{2n-4},-\tfrac{c^2}{(\rho+c)(\rho+2c)}\big)\\
            &\quad+(2n-2)\big(-\tfrac{c}{2(\rho+c)(\rho+2c)}E_{2,4n-1}+\tfrac{2c(\rho+c)}{\rho+2c}E_{4n-1,2}\big),
        \end{align*} where $$\mathcal{S}_4:=\begin{pmatrix}
            0&0&-\tfrac{\rho}{\rho+2c}&0\\
            0&0&0&-\tfrac{\rho}{\rho+2c}\\
            -1&0&0&0\\
            0&-1&0&0
        \end{pmatrix}.$$
        \item If $n>1$, the symmetric endomorphism associated with the Killing form is $B=(2n+4)\frac{\rho}{\rho+c}E_{1,1}.$
        \item If $n=1$, then $B=0=H$.
    \end{enumerate}
\end{lemma}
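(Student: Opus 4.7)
The plan is to combine three ingredients: the explicit brackets from Lemmas \ref{Lem:frnheis}, \ref{Lem:bracketsfrb} and \ref{lemma:brackets_semidirect}, the matrix of $\ad(B_1^R)$ already computed in the proof of Proposition \ref{prop:non-unimodular}, and the Gram matrix of $g_\rho^c$ from Proposition \ref{Prop:gcrho_frl}; with these in hand one may evaluate the defining identities for $H$ and $B$ directly in the basis $\calB_n$. For the mean curvature vector, I would first observe that $\frn_0:=\mathrm{span}(\calB_n\setminus\{B_1^R\})$ is an ideal of $\frl$ consisting entirely of ad-nilpotent elements by Proposition \ref{prop:non-unimodular}, hence a nilpotent ideal; since $B_1^R$ is not ad-nilpotent it cannot belong to any nilpotent ideal, and so the nilradical of $\frl$ equals $\frn_0$. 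The Gram matrix of Proposition \ref{Prop:gcrho_frl} then shows $\fra=\frn^\perp=\R B_1^R$, and the identity $g_\rho^c(H,B_1^R)=\Trace(\ad(B_1^R))=2n-2$ combined with $g_\rho^c(B_1^R,B_1^R)=\tfrac{\rho+c}{\rho}$ immediately yields $H=(2n-2)\tfrac{\rho}{\rho+c}B_1^R$.

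To compute $\ad(H)^s=\tfrac{1}{2}(\ad(H)+G^{-1}\ad(H)^\top G)$, where $G$ is the Gram matrix of $g_\rho^c$, I would exploit that $\ad(H)$ is a scalar multiple of $\ad(B_1^R)=\diag(0,2,\mathbbm{1}_{2n-4},\calV_4,\mathbb{O}_{2n-4},0)$. The Gram matrix is block-diagonal relative to the $\ad(B_1^R)$-eigenspace decomposition, with the single exception of a $2\times 2$ coupling between $B_1^I$ and $Z$ (off-diagonal entry $-\tfrac{c(\rho+c)}{2\rho^2(\rho+2c)}$). On the diagonal blocks the symmetrization is immediate: on the eigenlines $\R B_a^R,\R B_a^I$ and on $\mathrm{span}(e_a,f_a\mid a\geq 2)$ one reads off the eigenvalues $\tfrac{(2n-2)\rho}{\rho+c}$ or $0$; on the swap block, symmetrizing $\calV_4$ with respect to the Gram weights $\calG_4=\diag\bigl(\tfrac{\rho+2c}{4\rho^2}\mathbbm{1}_2,\tfrac{1}{4\rho}\mathbbm{1}_2\bigr)$ produces precisely the entries $-\tfrac{\rho}{\rho+2c}$ and $-1$ appearing in $\calS_4$. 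The genuinely nontrivial part is the $(B_1^I,Z)$-block: inverting the associated $2\times 2$ Gram block (of determinant $\tfrac{(\rho+c)^2}{4\rho^3(\rho+2c)}$) and carrying out the matrix multiplication produces, after simplification, both diagonal entries and the two off-diagonal corrections $E_{2,4n-1}$ and $E_{4n-1,2}$ claimed in the statement.

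For the Killing endomorphism $B$ the clean argument relies on the complete solvability of $\frl$ (Proposition \ref{prop:non-unimodular}): by Lie's theorem there is a basis in which every $\ad(X)$ is upper triangular, and $\ad(X)$ is nilpotent precisely when its diagonal vanishes. In such a basis $\Trace(\ad(X)\ad(Y))=\sum_i\ad(X)_{ii}\ad(Y)_{ii}$, so the Killing form $K$ vanishes as soon as one of its arguments is ad-nilpotent; bilinearity together with $B_1^R$ being the only non-ad-nilpotent basis element forces $K$ to be concentrated in the $(B_1^R,B_1^R)$-entry. Using $\calV_4^2=\mathbbm{1}_4$ one computes $K(B_1^R,B_1^R)=\Trace(\ad(B_1^R)^2)=4+(2n-4)+4=2n+4$, and combining with $g_\rho^c(BX,Y)=K(X,Y)$ produces $B=(2n+4)\tfrac{\rho}{\rho+c}E_{1,1}$. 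Finally, the case $n=1$ is immediate: $\frl=\heis_3$ is nilpotent by Remark \ref{remark:n=1_nilpotent}, so every $\ad(X)$ has zero trace and is nilpotent, which forces $H=0$ and $K\equiv 0$, hence $B=0$. The only real obstacle I expect is the $2\times 2$ inversion in the $(B_1^I,Z)$-block for $\ad(H)^s$; everything else reduces to linear-algebraic bookkeeping from the explicit bracket relations.
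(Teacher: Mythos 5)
Your proposal is correct, and for the mean curvature vector and $\ad(H)^s$ it is essentially the paper's computation: identify $\fra=\R B_1^R$, use $g_\rho^c(B_1^R,B_1^R)=\tfrac{\rho+c}{\rho}$ and $\Trace(\ad(B_1^R))=2n-2$, then form $\ad(B_1^R)^*=G^{-1}\ad(B_1^R)^\top G$ blockwise and symmetrize; your block analysis (only the $(B_1^I,Z)$-coupling is nontrivial, determinant $\tfrac{(\rho+c)^2}{4\rho^3(\rho+2c)}$, and the weights $\tfrac{\rho}{\rho+2c}$, $\tfrac{\rho+2c}{\rho}$ on the $\calV_4$-block) reproduces exactly the paper's intermediate matrices. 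One small imprecision: symmetrizing $\calV_4$ against $\calG_4$ gives off-diagonal entries $-\tfrac{\rho+c}{\rho+2c}$ and $-\tfrac{\rho+c}{\rho}$, and the entries $-\tfrac{\rho}{\rho+2c}$ and $-1$ of $\calS_4$ only appear after multiplying by the factor $\tfrac{\rho}{\rho+c}$ coming from $H=(2n-2)\tfrac{\rho}{\rho+c}B_1^R$; as stated your sentence conflates these two steps, but the bookkeeping is otherwise right. Where you genuinely depart from the paper is part (2): the paper simply asserts, by direct calculation, that $\beta(X,Y)=0$ for every $X\in\calB_n\setminus\{B_1^R\}$ and all $Y$, whereas you deduce this structurally from complete solvability (Proposition \ref{prop:non-unimodular}) via simultaneous real triangularization, so that the Killing form vanishes as soon as one argument is ad-nilpotent, leaving only $\Trace(\ad(B_1^R)^2)=4+(2n-4)+4=2n+4$ to compute (using $\calV_4^2=\mathbbm{1}_4$). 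This is a cleaner and less error-prone argument than checking all basis pairs, at the cost of invoking the real Lie theorem for completely solvable algebras; the conclusion $B=(2n+4)\tfrac{\rho}{\rho+c}E_{1,1}$ then follows because index $1$ is uncoupled in the Gram matrix, and your treatment of $n=1$ (nilpotency kills both $H$ and the Killing form) matches the paper.
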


\begin{proof}
    Both assertions are proved by direct calculations. The details are as follows.\medskip
    
    (1) Since in our case $\fra=\mathrm{span}_{\R}\{B_1^R\}$, we have that $H$ is a multiple of $B_1^R$. In particular $$H=\frac{\Trace(\ad(B_1^R))}{g_\rho^c(B_1^R,B_1^R)}B_1^R=(2n-2)\frac{\rho}{\rho+c}B_1^R.$$
    
    We have computed $\ad(B_1^R)$ and $g_\rho^c$, in the basis $\calB_n$, in Proposition \ref{prop:non-unimodular} and \ref{Prop:gcrho_frl}, respectively. Hence we can compute the adjoint operator of $\ad(B_1^R)$: \begin{align*}
        \ad(B_1^R)^*&=\mathrm{diag}\big(0,\tfrac{2(\rho+c)^2}{\rho(\rho+2c)},\mathbbm{1}_{2n-4},\mathcal{V}_4^*,\mathbb{O}_{2n-4},-\tfrac{2c^2}{\rho(\rho+2c)}\big)\\
        &\quad-\tfrac{c}{\rho(\rho+2c)}E_{2,4n-1}+\tfrac{4c(\rho+c)^2}{\rho(\rho+2c)}E_{4n-1,2},
        \end{align*} where $$\mathcal{V}_4^*:=\begin{pmatrix}
            0&0&-\tfrac{\rho}{\rho+2c}&0\\
            0&0&0&-\tfrac{\rho}{\rho+2c}\\
            -\tfrac{\rho+2c}{\rho}&0&0&0\\
            0&-\tfrac{\rho+2c}{\rho}&0&0
        \end{pmatrix}.$$
        
    We then have \begin{align*}
        \ad(B_1^R)^s&=\mathrm{diag}\big(0,\tfrac{2\rho^2+4c\rho+c^2}{\rho(\rho+2c)},\mathbbm{1}_{2n-4},\tilde{\mathcal{S}}_4,\mathbb{O}_{2n-4},-\tfrac{c^2}{\rho(\rho+2c)}\big)\\
        &\quad-\tfrac{c}{2\rho(\rho+2c)}E_{2,4n-1}+\tfrac{2c(\rho+c)^2}{\rho(\rho+2c)}E_{4n-1,2},
        \end{align*} where $$\tilde{\mathcal{S}}_4:=\begin{pmatrix}
            0&0&-\tfrac{\rho+c}{\rho+2c}&0\\
            0&0&0&-\tfrac{\rho+c}{\rho+2c}\\
            -\tfrac{\rho+c}{\rho}&0&0&0\\
            0&-\tfrac{\rho+c}{\rho}&0&0
        \end{pmatrix}.$$
        
    Then $$\ad(H)^s=(2n-2)\frac{\rho}{\rho+c}\ad(B_1^R)^{s}.$$
    
    (2) Let us consider the Killing form $\beta(X,Y)=\Trace(\ad(X)\circ\ad(Y))$. Then $\beta(B_1^R,B_1^R)=2n+4$ and $\beta(X,Y) = 0$ for all $Y\in \mathcal B_n$ and $X\in\mathcal B_n\setminus\{B_1^R\}$. This implies that $$B=(g_\rho^c)^{-1}\beta=(2n+4)\frac{\rho}{\rho+c}E_{1,1}.$$
    
    (3) This is clear, since $\fra =0$ and $\frl = \heis_{3}$ is nilpotent.
\end{proof}

\subsection{Existence of solvsolitons on \texorpdfstring{$\bar N_\rho$}{N}}

Now that we have determined the structure of the Lie algebra $\frl$ and have obtained explicit formulas for the metric $g^c_\rho$ and its Ricci endomorphism $\ric^c_\rho$, we can determine whether $(\frl,g_\rho^c)$ is a solvsoliton or not.

\begin{lemma}\label{Lem:DerL}
    Consider the Lie algebra $\frl = \frb\ltimes\heis_{2n+1}$ and write $\heis_{2n+1} = \C^n\oplus \R Z$.  Consider $\delta\in\mathrm{End}(\frl)$ given by $\delta|_{\frb} = 0$, $\delta Z = 2Z$ and $\delta V = V$ for all $V\in \C^{n}$. Then $\delta$ is a derivation of $\frl$.
\end{lemma}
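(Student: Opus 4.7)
The plan is to verify the Leibniz rule $\delta[X,Y] = [\delta X, Y] + [X,\delta Y]$ case by case, using the vector space decomposition $\frl = \frb \oplus \C^n \oplus \R Z$ and the observation that $\delta$ is the direct sum of the zero map on $\frb$ and the grading derivation of $\heis_{2n+1}$ attached to the natural $\Z$-grading (degree $0$ on $\frb$, degree $1$ on $\C^n$, degree $2$ on $\R Z$).

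By bilinearity it suffices to check the identity on pairs of basis vectors of the form $(X,Y)$ with $X,Y$ each drawn from $\frb$, $\C^n$, or $\R Z$. There are six cases (up to symmetry), which I would treat as follows. First, for $X,Y \in \frb$: since $\frb$ is a subalgebra, $[X,Y] \in \frb$, so both sides vanish. Second, for $X \in \frb$, $Y \in \C^n$: the action of $\frb \subset \mathfrak{su}(1,n-1)$ on $\heis_{2n+1} \cong \C^n \oplus \R Z$ is by the standard representation on $\C^n$ and trivially on $\R Z$, so $[X,Y] \in \C^n$; hence $\delta[X,Y] = [X,Y] = [X,\delta Y] = [\delta X,Y] + [X,\delta Y]$. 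Third, for $X \in \frb$, $Y = Z$: we have $[X,Z] = 0$ and $[X,\delta Z] = 2[X,Z] = 0$, so both sides vanish.

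Fourth, for $X, Y \in \C^n$: the Heisenberg bracket gives $[X,Y] \in \R Z$, so $\delta[X,Y] = 2[X,Y]$ while $[\delta X, Y] + [X, \delta Y] = [X,Y] + [X,Y] = 2[X,Y]$. Fifth, for $X \in \C^n$, $Y = Z$: $[X,Z] = 0$ because $Z$ is central in $\heis_{2n+1}$, and $[\delta X, Z] + [X, \delta Z] = 0 + 2[X,Z] = 0$. Sixth, for $X = Y = Z$: trivial.

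There is essentially no obstacle; the entire content is that the assignment of weights $(0,1,2)$ to $(\frb, \C^n, \R Z)$ is compatible with the bracket. The only point that requires a moment of care is confirming that $[\frb, \C^n] \subset \C^n$ (i.e.\ $\frb$ does not move $\C^n$ into $\R Z$), which follows from the fact that the semidirect product structure on $\frl$ arises from the standard representation of $\mathfrak{u}(1,n-1)$ on $\C^n$, extended trivially to the center $\R Z$ of $\heis_{2n+1}$. With this observation in hand, all six verifications above are immediate.
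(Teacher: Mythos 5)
Your proof is correct and follows essentially the same route as the paper: both arguments reduce to checking the Leibniz rule on the decomposition $\frb\oplus\C^n\oplus\R Z$, using that $\delta$ vanishes on $\frb$, that $[\heis_{2n+1},\heis_{2n+1}]\subset\R Z$ with $\delta Z=2Z$, and that $[\frb,\heis_{2n+1}]\subset\C^n$ since $\frb$ acts through the standard representation with the center acted on trivially. Your explicit six-case breakdown and the grading interpretation are just a more spelled-out version of the paper's ``straightforward computation.''
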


\begin{proof}
    First note that the endomorphism $\delta$ acts as zero on $\frb=\fra\oplus\frg_2\oplus\frg_1$. For the Heisenberg Lie algebra $\heis_{2n+1}$ we have $[\heis_{2n+1},\heis_{2n+1}] \subset \R Z$ and $\delta Z = 2Z$. This implies that for all $V+tZ,W+uZ\in\heis_{2n+1}=\C^n\oplus\R Z$ we have 
    $$\delta[V+tZ,W+uZ]=2[V+tZ,W+uZ] = 2[V,W].$$ 
    On the other hand \begin{align*}
        [\delta(V+tZ),W+uZ]+[V+tZ,\delta(W+uZ)]&=[V+2tZ,W+uZ] + [V+tZ,W+2uZ]\\
        &= [V,W]+[V,W] =  2[V,W].
    \end{align*}
    
    Using that $\delta B=0$ and $[B,V+tZ]\in\C^n\oplus\{0\}\subset \heis_{2n+1}$ for all $B\in\frb$ and all $V+tZ\in\heis_{2n+1}$ and a straightforward computation we conclude that $\delta \in\Der(\frl)$.
\end{proof}

As we have pointed out in Remark \ref{remark:n=1_nilpotent}, the case $n=1$ is special, so we consider it separately.

\begin{theorem}\label{thm_heis3}
    Let $n=1$. Then the pair $(\frl,g_\rho^c)$ is a nilsoliton for any $c\geq 0$ and $\rho>0$.
\end{theorem}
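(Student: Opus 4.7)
The strategy is to directly verify the defining equation of an algebraic Ricci soliton using the explicit formulas already available: the Ricci endomorphism computed in Proposition \ref{Prop:gcrho_frl}(3) and the derivation $\delta\in\Der(\frl)$ constructed in Lemma \ref{Lem:DerL}. Since $\frl = \heis_3$ is nilpotent for $n=1$, establishing the algebraic Ricci soliton property immediately gives a nilsoliton by definition.

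First I would recall the setup: for $n=1$, the basis $\calB_1$ is $(e_0, f_0, Z)$ with the single non-trivial bracket $[e_0,f_0]=Z$. Specializing Lemma \ref{Lem:DerL} to $n=1$ (where $\frb=0$), the endomorphism $\delta\in\End(\frl)$ acts diagonally in $\calB_1$ as
$$\delta=\diag(1,1,2),$$
and Lemma \ref{Lem:DerL} guarantees $\delta\in\Der(\frl)$. Meanwhile, Proposition \ref{Prop:gcrho_frl}(3) gives, in the same basis,
$$\ric_\rho^c = \mu\cdot\diag(-1,-1,1),\qquad \mu := \frac{2\rho^2(\rho+c)}{(\rho+2c)^3}>0.$$

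Next I would look for $\lambda\in\R$ and $a\in\R$ such that $\ric_\rho^c=\lambda\Id+a\delta$. In the basis $\calB_1$, this reduces to the $2\times 2$ linear system
$$\lambda+a=-\mu,\qquad \lambda+2a=\mu,$$
which has the unique solution $a=2\mu$ and $\lambda=-3\mu$. Setting $D:=2\mu\,\delta$, we have $D\in\Der(\frl)$ (as a real multiple of a derivation) and
$$\ric_\rho^c = -3\mu\,\Id + D,$$
with $\lambda=-3\mu<0$. This is exactly equation \eqref{eq:alg_Ricci_sol}, so $(\frl,g_\rho^c)$ is an algebraic Ricci soliton on the nilpotent Lie group $L=\Heis_3$, i.e.\ a nilsoliton.

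This is the entire argument; there is no real obstacle. The one thing worth double-checking is the compatibility of sign conventions between the Ricci formulas in Proposition \ref{Prop:gcrho_frl}(3) and the matrix form of $\delta$, both expressed in the basis $\calB_1$, but since both were computed in the same ordered basis this is immediate. I would also briefly note consistency with Milnor's classical result \cite{Mil76}: every left-invariant metric on $\Heis_3$ is a nilsoliton, and the Einstein-type constant $\lambda=-3\mu$ is manifestly negative, in accordance with \cite[Proposition 4.6]{Lau11}.
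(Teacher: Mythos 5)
Your proposal is correct and follows essentially the same route as the paper: it reads off $\ric_\rho^c=\mu\,\diag(-1,-1,1)$ from Proposition \ref{Prop:gcrho_frl}(3), takes the derivation $\delta=\diag(1,1,2)$ from Lemma \ref{Lem:DerL}, and solves $\ric_\rho^c=\lambda\Id+D$ with $\lambda=-3\mu$ and $D=2\mu\,\delta$, exactly the paper's choice ($\mu=K$). Nothing further is needed.
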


\begin{proof}
    In the case $n=1$ we have that $\frb$ is zero-dimensional and then $\frl=\heis_3$. 
    By Proposition \ref{Prop:gcrho_frl} the Ricci endomorphism with respect to the basis $\calB_1$ is 
    $$\ric_\rho^c=K\left(\begin{array}{rrr}
        -1 & 0 & 0 \\
        0 & -1 & 0 \\
        0 & 0 & 1
    \end{array}\right),$$ with $K:=\frac{2\rho^2 \, {\left(c + \rho\right)}}{(2\, c+\rho)^3}$. If we take $\lambda=-3K$ and $$D=2K\delta =2K\left(\begin{array}{rrr}
        1 & 0 & 0 \\
        0 & 1 & 0 \\
        0 & 0 & 2
        \end{array}\right)\in\Der(\heis_3),$$ then \eqref{eq:alg_Ricci_sol} holds.
    \end{proof}
    
    \begin{remark}
        The result of Theorem \ref{thm_heis3} is not new since the existence of a nilsoliton metric on $\heis_3$ is well-known in the literature (see e.g.\ \cite[Corollary 4.6]{Mil76}). In fact, it follows from this result of Milnor that \emph{any} left-invariant metric on $\heis_{3}$ is a Ricci soliton.
    \end{remark}
    
    \begin{theorem}\label{thm:c=0_soliton}
        Let $n>1$. Then the pair $(\frl,g_\rho^c)$ is a solvsoliton for $c=0$ and $\rho>0$.
    \end{theorem}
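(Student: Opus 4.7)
The plan is to exhibit explicit $\lambda\in\R$ and $D\in\Der(\frl)$ such that $\ric_\rho^0 = \lambda\,\Id + D$, thereby verifying directly the definition of algebraic Ricci soliton. The natural candidates are $\lambda = -2(n+2)$ (the smallest principal Ricci curvature) and $D = 2(n+1)\delta$, where $\delta$ is the derivation constructed in Lemma \ref{Lem:DerL}.

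First, I would specialize Proposition \ref{Prop:gcrho_frl}(2) and the formulas for $r_1,r_2,r_3,r_4$ from Proposition \ref{prop:Ricci_tensor} to $c=0$. In this case, the off-diagonal correction $2c(r_1-r_2)E_{4n-1,2}$ vanishes, so $\ric_\rho^0$ is diagonal in the basis $\calB_n$, with entries
\[
r_1 = -2(n+2),\quad r_2 = 2n,\quad r_3 = r_4 = -2.
\]
Hence, in the ordered basis $\calB_n = (B_1^R,B_1^I,\ldots,B_{n-1}^R,B_{n-1}^I, e_0,f_0,\ldots,e_{n-1},f_{n-1},Z)$,
\[
\ric_\rho^0 = \mathrm{diag}\bigl(-2(n+2)\mathbbm{1}_{2n-2},\, -2\mathbbm{1}_{2},\, -2\mathbbm{1}_{2n-2},\, 2n\bigr).
\]

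Next, subtracting $\lambda\,\Id$ with $\lambda = -2(n+2)$ yields
\[
D := \ric_\rho^0 - \lambda\,\Id = \mathrm{diag}\bigl(\mathbb{O}_{2n-2},\, 2(n+1)\mathbbm{1}_{2},\, 2(n+1)\mathbbm{1}_{2n-2},\, 4(n+1)\bigr),
\]
which acts as $0$ on $\frb$, as $2(n+1)\,\Id$ on $\C^n\subset\heis_{2n+1}$, and as multiplication by $4(n+1)$ on $\R Z$. This is precisely $2(n+1)\delta$ with $\delta$ the endomorphism of Lemma \ref{Lem:DerL}, so $D\in\Der(\frl)$, and $\ric_\rho^0 = -2(n+2)\,\Id + D$, as required.

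The proof is essentially a bookkeeping exercise; the only delicate point is to match the block decomposition of $\ric_\rho^0$ in Proposition \ref{Prop:gcrho_frl} with the decomposition $\frl = \frb \oplus \C^n \oplus \R Z$ on which $\delta$ is defined, and to verify that the eigenvalues of the two bracketing blocks $r_3, r_4$ on $\C^n$ coincide when $c=0$ (which is the crucial degeneracy that makes the candidate derivation $\delta$ work). As a cross-check, one could alternatively invoke Theorem \ref{thm:nilsoliton_solvsoliton}: $\fra = \R B_1^R$ is abelian; the nilradical $\frn = \frn_\frb \oplus \heis_{2n+1}$ receives a nilsoliton structure (by restriction of the argument above, since $D|_{\frn}$ is still a derivation with positive eigenvalues); $\ad(B_1^R)$ is readily checked to be normal at $c=0$ (by inspecting the matrix in Proposition \ref{prop:non-unimodular}); and the normalization condition on $g_\rho^0(B_1^R,B_1^R)$ follows from a direct trace computation. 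However, the direct exhibition of $\lambda$ and $D$ is more efficient and also fits the final conclusion in Remark \ref{rmk:general_sym_solv} coming from the general symmetric-space theory of \cite{DST21}.
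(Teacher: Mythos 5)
Your proposal is correct and is essentially the paper's own proof: both specialize Proposition \ref{Prop:gcrho_frl} to $c=0$ (where the off-diagonal term vanishes and $r_3=r_4=-2$) and exhibit $\lambda=-2(n+2)$ together with $D=(2n+2)\delta$, $\delta$ being the derivation of Lemma \ref{Lem:DerL}, so that $\ric_\rho^0=\lambda\Id+D$. The alternative cross-check via Theorem \ref{thm:nilsoliton_solvsoliton} you sketch is not needed and is not what the paper does.
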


    \begin{proof}
        In this case the metric $g_\rho=g_\rho^0$ is diagonal with respect to the basis $\calB_n$ and its Ricci endomorphism is computed in Proposition \ref{Prop:gcrho_frl}: $$\ric_\rho=\diag\big(r_1\mathbbm{1}_{2n-2},r_3\mathbbm{1}_2,r_4\mathbbm{1}_{2n-2},r_2\big)=\diag\big(-2(n+2)\mathbbm{1}_{2n-2},-2\mathbbm{1}_2,-2\mathbbm{1}_{2n-2},2n\big).$$
        
        If we consider the derivation $\delta$ from Lemma \ref{Lem:DerL},  choose $\lambda=-2(n+2)$ and put 
        $$D=(2n+2)\delta = \mathrm{diag}\big(\mathbb{O}_{2n-2},(2n+2)\mathbbm{1}_2,(2n+2)\mathbbm{1}_{2n-2},2(2n+2)\big),$$ 
        then \eqref{eq:alg_Ricci_sol} holds. 
    \end{proof}

    \begin{remark}\label{rmk:general_sym_solv}
		Let $S$ be a codimension one connected Lie subgroup of the solvable Iwasawa group $AN$ of an irreducible symmetric space of non-compact type. It was shown in \cite[Theorem A]{DST21} that if $S$ contains the nilpotent part $N$, then $S$ is a Ricci soliton with respect to the metric induced by the left-invariant Einstein metric on $AN$. For the case $c=0$ we have $\SU(n,2)/\mathrm{S}(\mathrm{U}(n)\times\mathrm{U}(2))$ equipped with its symmetric metric. Thus Theorem \ref{thm:c=0_soliton} shows that our explicit computations agree with this general result.
    \end{remark}

    

    \begin{theorem}\label{thm:c>0_not_soliton}
        Let $n>1$. Then the pair $(\frl,g_\rho^c)$ is not a solvsoliton for $c>0$ and $\rho>0$.
    \end{theorem}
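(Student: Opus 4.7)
The plan is to assume for contradiction that $(\frl,g_\rho^c)$ is a solvsoliton and derive an inconsistency from the derivation property applied to a single carefully chosen bracket. Write $\ric_\rho^c = \lambda\Id + D$ with $\lambda\in\R$ and $D\in\Der(\frl)$. By Proposition \ref{Prop:gcrho_frl}, in the basis $\calB_n$ the matrix of $\ric_\rho^c$ is diagonal with entries $(r_1\mathbbm{1}_{2n-2},r_3\mathbbm{1}_2,r_4\mathbbm{1}_{2n-2},r_2)$ except for the single off-diagonal contribution $2c(r_1-r_2)E_{4n-1,2}$; subtracting $\lambda\Id$ affects only the diagonal, so
$$DB_1^R=(r_1-\lambda)B_1^R,\qquad DB_1^I=(r_1-\lambda)B_1^I+2c(r_1-r_2)Z,\qquad DZ=(r_2-\lambda)Z.$$

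The key step is to apply the derivation identity to the bracket $[B_1^R,B_1^I]=2B_1^I$ from Lemma \ref{Lem:bracketsfrb}. Note that $[B_1^R,Z]=0$: this follows from the semidirect-product structure, since $\frb$ acts on $\heis_{2n+1}$ through the standard representation of $\mathfrak{u}(1,n-1)$ on $\C^n$, which preserves the center $\R Z$ (one can also verify this directly from the Jacobi identity using Lemma \ref{lemma:brackets_semidirect}). A direct computation then yields
$$D[B_1^R,B_1^I]=2(r_1-\lambda)B_1^I+4c(r_1-r_2)Z,\qquad [DB_1^R,B_1^I]+[B_1^R,DB_1^I]=4(r_1-\lambda)B_1^I.$$
Comparing coefficients of the linearly independent vectors $B_1^I$ and $Z$ forces $\lambda=r_1$ and $4c(r_1-r_2)=0$.

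The final step is to check that $r_1=r_2$ cannot hold for $c>0$, $\rho>0$ and $n>1$. From Proposition \ref{prop:Ricci_tensor}, the numerator of $r_1$ equals $-2(n+2)\rho^2-4(n+2)c\rho-6c^2$, a sum of strictly negative terms, while the denominator $(\rho+c)(\rho+2c)$ is positive; hence $r_1<0$. On the other hand, every coefficient in the numerator of $r_2$ is nonnegative for $n\geq 1$, with leading coefficient $2n>0$, and the denominator $(\rho+c)(\rho+2c)^3$ is positive, so $r_2>0$. Thus $r_1-r_2<0$, contradicting $4c(r_1-r_2)=0$.

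All ingredients for this argument are already in place, so the main subtlety is the choice of bracket rather than any substantial computation. Because $B_1^I$ is the only basis vector $D$ maps off-diagonally (to a multiple of $Z$), the bracket $[B_1^R,B_1^I]$ is precisely the one that turns the off-diagonal entry $2c(r_1-r_2)$ into a linearly independent obstruction on the $Z$-component. Two alternative routes would be to verify that $\ad(B_1^R)$ fails to be a normal operator with respect to $g_\rho^c$ when $c>0$ (thereby violating condition (3) of Theorem \ref{thm:nilsoliton_solvsoliton}), or to combine Theorem \ref{thm:c=0_soliton} with Lauret's uniqueness of solvsolitons \cite{Lau11} and the non-homotheticity recorded in Corollary \ref{cor:non-homothetic}.
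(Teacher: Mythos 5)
Your proof is correct, and it takes a genuinely different route from the paper. The paper proves the theorem by invoking Lauret's structural characterization of solvsolitons (Theorem \ref{thm:nilsoliton_solvsoliton}), whose condition (3) requires $\ad(A)$ to be normal for all $A\in\fra$, and then verifying by explicit computation (Lemma \ref{Lem:adB1Rnotnormal}) that $[\ad(B_1^R),\ad(B_1^R)^*]\neq 0$ whenever $c>0$; this uses the adjoint of $\ad(B_1^R)$ with respect to the non-diagonal Gram matrix of $g_\rho^c$, which the authors have in any case already computed for other purposes. You instead argue directly from the definition of an algebraic Ricci soliton: since $D=\ric_\rho^c-\lambda\Id$ is forced, its values on $B_1^R$, $B_1^I$, $Z$ are read off from Proposition \ref{Prop:gcrho_frl} (your identification of the off-diagonal term, $\ric_\rho^c(B_1^I)=r_1B_1^I+2c(r_1-r_2)Z$, is the correct reading of $E_{4n-1,2}$ in the basis $\calB_n$), and the derivation identity applied to the single bracket $[B_1^R,B_1^I]=2B_1^I$, together with $[B_1^R,Z]=0$, yields $\lambda=r_1$ and $4c(r_1-r_2)=0$ on the linearly independent $B_1^I$- and $Z$-components; the sign analysis $r_1<0<r_2$ from Proposition \ref{prop:Ricci_tensor} then gives the contradiction. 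What your approach buys is economy of machinery: it avoids Lauret's theorem altogether (and hence any discussion of the nilradical, the orthogonal decomposition $\frl=\fra\oplus[\frl,\frl]$, or the $\lambda<0$ versus $\lambda\geq0$ case distinction), and it requires no computation of $\ad(B_1^R)^*$. What the paper's approach buys is that it exhibits precisely which of Lauret's structural conditions fails (normality of $\ad(B_1^R)$), which is conceptually informative and ties the failure to the deformation parameter $c$ in a uniform way; it also reuses data already needed for Lemma \ref{lemma:curvature_term_R}. Your closing observation that the result also follows from Theorem \ref{thm:c=0_soliton}, Lauret's uniqueness of solvsolitons and Corollary \ref{cor:non-homothetic} is exactly the alternative proof recorded in Remark \ref{rmk:alternative_proof}.
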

    
    \begin{proof}
        We have the orthogonal decomposition $\frl=\fra\oplus[\frl,\frl]$, where $[\frl,\frl]$ coincides with the nilradical of $\frl$. In this situation, Theorem \ref{thm:nilsoliton_solvsoliton} provides a characterization of the existence of a Ricci soliton on $(\frl,g_\rho^c)$. In particular, $\ad(A)$ must be a normal operator for all $A\in\fra$, that is $[\ad(A),\ad(A)^*]=0$, where $\ad(A)^*=(g_\rho^c)^{-1}\ad(A)^\top g_\rho^c$ denotes the adjoint operator of $\ad(A)$ with respect to the metric $g_\rho^c$. In Lemma \ref{Lem:adB1Rnotnormal} we will show that $$[\ad(B_1^R),\ad(B_1^R)^*]\neq0,$$ therefore we do not have a Ricci soliton.
    \end{proof}
    
    \begin{lemma}\label{Lem:adB1Rnotnormal}
        $[\ad(B_1^R),\ad(B_1^R)^*]\neq0$.
    \end{lemma}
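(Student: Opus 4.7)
The plan is to exhibit a single non-vanishing matrix entry of $[\ad(B_1^R),\ad(B_1^R)^*]$ using the explicit expressions for $\ad(B_1^R)$ and $\ad(B_1^R)^*$ already obtained in Proposition \ref{prop:non-unimodular} and Lemma \ref{lemma:curvature_term_R}. The key structural observation is that in the ordered basis $\calB_n$ the matrix $\ad(B_1^R)$ is diagonal outside of the $4\times 4$ block $\mathcal{V}_4$ at positions $2n-1,\dots,2n+2$. In particular, row $4n-1$ (the $Z$-row) is identically zero, and column $2$ (the $B_1^I$-column) carries only the scalar $2$ at position $(2,2)$. Meanwhile, the Gram matrix of $g_\rho^c$ in Proposition \ref{Prop:gcrho_frl} has the off-diagonal coupling term $-\tfrac{c(\rho+c)}{2\rho^2(\rho+2c)}(E_{2,4n-1}+E_{4n-1,2})$ between $B_1^I$ and $Z$, and this coupling is exactly what produces the off-diagonal contributions $-\tfrac{c}{\rho(\rho+2c)}E_{2,4n-1}+\tfrac{4c(\rho+c)^2}{\rho(\rho+2c)}E_{4n-1,2}$ in $\ad(B_1^R)^*$.

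I would then examine the $(4n-1,2)$ entry of the commutator. In the product $\ad(B_1^R)\ad(B_1^R)^*$ this entry vanishes because row $4n-1$ of $\ad(B_1^R)$ is zero. In the product $\ad(B_1^R)^*\ad(B_1^R)$ only the single term $\ad(B_1^R)^*_{4n-1,2}\cdot\ad(B_1^R)_{2,2}$ survives, since column $2$ of $\ad(B_1^R)$ has only the entry $2$ at position $(2,2)$. Thus
\[
[\ad(B_1^R),\ad(B_1^R)^*]_{4n-1,2}=-\,2\cdot\tfrac{4c(\rho+c)^2}{\rho(\rho+2c)}=-\tfrac{8c(\rho+c)^2}{\rho(\rho+2c)},
\]
which is strictly non-zero for all $c>0$ and $\rho>0$.

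A conceptually cleaner reformulation, which could replace or complement the direct calculation, is to argue by contradiction: $B_1^I$ and $Z$ are eigenvectors of $\ad(B_1^R)$ with distinct real eigenvalues $2$ and $0$; normality of $\ad(B_1^R)$ with respect to $g_\rho^c$ would then force $g_\rho^c(B_1^I,Z)=0$, but Proposition \ref{Prop:gcrho_frl} gives $g_\rho^c(B_1^I,Z)=-\tfrac{c(\rho+c)}{2\rho^2(\rho+2c)}\neq 0$ for $c>0$. The only real obstacle here is bookkeeping: keeping the indexing of $\calB_n$ consistent across the various explicit matrices and verifying that the off-diagonal $\mathcal{V}_4$ block, which lives strictly on rows and columns $2n-1,\dots,2n+2$, does not interact with the $(4n-1,2)$ entry on which the argument rests.
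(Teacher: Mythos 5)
Your proposal is correct. Its first, computational argument is essentially the paper's proof in economized form: the paper multiplies out the full commutator $[\ad(B_1^R),\ad(B_1^R)^*]$ in the basis $\calB_n$ and notes it vanishes only for $c=0$, whereas you isolate the single entry $(4n-1,2)$; your sparsity bookkeeping is accurate (the $Z$-row of $\ad(B_1^R)$ is zero, column $2$ carries only the entry $2$, and the $\mathcal{V}_4$ block sits in rows and columns $2n-1,\dots,2n+2$, which for $n>1$ never meet column $2$ or row $4n-1$), and your value $-\tfrac{8c(\rho+c)^2}{\rho(\rho+2c)}$ agrees with the coefficient of $E_{4n-1,2}$ in the paper's displayed commutator. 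Your second argument, however, is a genuinely different and cleaner route that the paper does not take: it bypasses the computation of $\ad(B_1^R)^*$ entirely, using only $[B_1^R,B_1^I]=2B_1^I$ (Lemma \ref{Lem:bracketsfrb}), $[B_1^R,Z]=0$, the standard fact that a $g_\rho^c$-normal operator satisfies $\ker(A-\lambda\Id)=\ker(A^*-\lambda\Id)$ and hence has orthogonal eigenspaces for distinct real eigenvalues, and the nonvanishing Gram entry $g_\rho^c(B_1^I,Z)=-\tfrac{c(\rho+c)}{2\rho^2(\rho+2c)}$ from Proposition \ref{Prop:gcrho_frl}. The paper's full computation buys the explicit commutator and the ``zero if and only if $c=0$'' statement; your eigenvector argument buys conceptual transparency, showing that the failure of normality for $c>0$ is precisely the one-loop coupling that tilts $Z$ out of the orthogonal complement of the eigenvalue-$2$ eigenspace of $\ad(B_1^R)$.
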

    
    \begin{proof}
        We work in the basis $\calB_n$ given by \eqref{Eq:DefcalB}. We have computed $\ad(B_1^R)$ and $\ad(B_1^R)^*$ in the proofs of Proposition \ref{prop:non-unimodular} and Lemma \ref{lemma:curvature_term_R}, respectively. Then we compute that \begin{align*}
            [\ad(B_1^R),\ad(B_1^R)^*]&=\mathrm{diag}\big(0,0,\mathbb{O}_{2n-4},\tfrac{4c(\rho+c)}{\rho(\rho+2c)}\mathbbm{1}_2,-\tfrac{4c(\rho+c)}{\rho(\rho+2c)}\mathbbm{1}_2,\mathbb{O}_{2n-4},0\big)\\
            &\quad-\tfrac{2c}{\rho(\rho+2c)}E_{2,4n-1}-\tfrac{8c(\rho+c)^2}{\rho(\rho+2c)}E_{4n-1,2},
        \end{align*} which is zero if and only if $c=0$.
    \end{proof}
    
    \begin{remark}\label{rmk:alternative_proof}
        It was shown in \cite[Theorem 5.1]{Lau11} that if two solvsolitons are isomorphic as Lie groups, then they are isometric up to scaling. This result, together with Corollary \ref{cor:non-homothetic}, provides an alternative proof to Theorem \ref{thm:c>0_not_soliton}.
    \end{remark}
    
    Recently, it was proved by Thompson \cite[Theorem A]{Tho24} that the results of \cite{LL14} can be extended to the inhomogeneous setting. More precisely, given a unimodular solvsoliton $(S_0,g_0)$, he shows that there exists a one-parameter family of complete Ricci soliton metrics on $M=\R\times S_0$, with $\lambda<0$, that are of cohomogeneity one and exactly one of the metrics on the family is Einstein. Our results differ from the ones of Thompson. Theorem \ref{thm:c=0_soliton} gives a non-unimodular solvsoliton $(\frl,g_\rho)$ which admits a rank-one extension to a quaternionic Kähler homogeneous metric (in fact symmetric, see Remark \ref{rmk:general_sym_solv}). Whereas for $c>0$ we have a metric $g_\rho^c$ on $\frl$, which is \emph{not} a Ricci soliton by Theorem \ref{thm:c>0_not_soliton}, but such that $(0,\infty)\times L$ admits a quaternionic Kähler metric, this time of cohomogeneity one.
    
    \begin{remark}
        In \cite{QK_from_QC_14} the concept of \emph{$\Sp(n)\Sp(1)$-hypo structure} is introduced and it is shown that any oriented hypersurface of a quaternionic Kähler manifold is naturally endowed with such structure. In the same paper the authors also study how to construct quaternionic Kähler metrics on $M\times\R$ for some classes of $4n+3$-dimensional manifolds $M$. Similar constructions can be found in \cite{Fow23}, where in this case $M$ is a torus bundle over a hyperkähler manifold of dimension $4n$. However, it is not clear at all how the curvature properties of the metric $g_\rho^c$ with $c>0$ would fit in this picture. Therefore, determine which is the precise geometry of $(\bar{N}_\rho,g_\rho^c)$ when $c>0$ will be the object of a future study.
    \end{remark}
    

\bibliographystyle{amsplain}
\bibliography{biblio}

\end{document}